\DeclarePairedDelimiter\abs{\lvert}{\rvert}
\DeclarePairedDelimiter\ave{\langle}{\rangle}
\newcommand{\rmI}{\mathrm{I}}
\newcommand{\cM}{\mathcal{M}}
\newcommand{\norm}[2]{\Vert#1\Vert_{#2}}
\newcommand{\R}{\mathbb{R}}
\newcommand{\rmS}{\mathrm{S}}
\newcommand{\T}{\mathcal{T}}
\newcommand*\tcircle[1]{%
  \raisebox{-0.5pt}{%
    \textcircled{\fontsize{7pt}{0}\fontfamily{phv}\selectfont #1}%
  }%
}
\newtheorem{proposition}{Proposition}[section]
\newtheorem{theorem}[proposition]{Theorem}
\newtheorem{lemma}[proposition]{Lemma}
\theoremstyle{remark}\newtheorem{remark}[proposition]{Remark}
\title{Derivation and quasi-invariant asymptotics of phenotype-structured integro-differential models}
\author{Emanuele Bernardi}
\author{Tommaso Lorenzi}
\author{Andrea Tosin}
\affil{{\small Department of Mathematical Sciences ``G. L. Lagrange'' \\ Politecnico di Torino, Italy}}
\date{}
\begin{document}
\maketitle

\begin{abstract}
Building upon kinetic theory approaches for multi-agent systems and generalising them to scenarios where the total mass of the system is not conserved, we develop a modelling framework for phenotype-structured populations that makes it possible to bridge individual-level mechanisms with population-scale evolutionary dynamics. We start by formulating a stochastic agent-based model, which describes the dynamics of single population members undergoing proliferation, death, and phenotype changes. Then, we formally derive the corresponding mesoscopic model, which consists of an integro-differential equation for the distribution of population members over the space of phenotypes, where phenotype changes are modelled via an integral kernel. Finally, considering a quasi-invariant regime of small but frequent phenotype changes, we rigorously derive a non-local Fokker--Planck-type equation counterpart of this model, wherein phenotype changes are taken into account by an advection-diffusion term. The theoretical results obtained are illustrated through a sample of results of numerical simulations.

\medskip

\noindent{\bf Keywords:} mass-varying multi-agent systems, kinetic description, phenotype-structured integro-differential models, non-local Fokker--Planck-type equations

\medskip

\noindent{\bf Mathematics Subject Classification:} 35R09, 35Q84, 35Q92
\end{abstract}

\section{Introduction}
Starting from the pioneering work of Ludwig Boltzmann~\cite{boltzmann1970CHAPTER}, who aimed to explain complex thermodynamic phenomena out of simple elastic collisions among gas molecules, kinetic theory evolved in recent years into a conceptual framework to model multi-agent systems~\cite{pareschi2013BOOK}, that is, systems composed of a large number of interacting entities (i.e. agents) that, as a result of their interactions, give rise to non-trivial collective behaviours. To name just a few representative application domains in which this framework has fruitfully been employed, we mention the study of socio-economic phenomena, such as opinion formation~\cite{boudin2016PHYSA,duering2015PRSA,toscani2006CMS} and wealth distribution~\cite{bisi2017BUMI,cordier2005JSP,duering2009RMUP}, vehicular traffic~\cite{borsche2022PHYSA,prigogine1960OR,tosin2019MMS}, and, more recently, the spread of infectious diseases~\cite{bisi2024JNS,dimarco2020PRE,loy2021MBE}. Moreover, kinetic equations have also been employed to investigate the theoretical properties of particle-based algorithms for non-convex optimisation~\cite{benfenati2022AMO,carrillo2018M3AS}.

The multiscale nature of the kinetic paradigm is particularly suited to the aforementioned application domains, since it enables one to link rigorously, in a probabilistic-statistical manner, the individual dynamics of the state of single agents to the aggregate trends that they generate at the level of the whole agent population. A common feature of such application domains, however, is that, for the aspects of interest, the total mass of the system (i.e. the number of agents that the system comprises) can be assumed to be constant. This leads to mass conservation, a principle that is also a pillar of the classical kinetic theory, as it facilitates operating in a probabilistic setting. Conversely, applications of the kinetic paradigm to scenarios where non-conservative phenomena occur, and thus the total mass of the system may change in time, have so far been mainly limited to gas mixtures possibly undergoing chemical reactions~\cite{andries2002JSP,bisi2010PRE,borsoni2022CMP,groppi2004JSP,groppi1999JMC} and multi-agent systems featuring label switching dynamics~\cite{bisi2024PHYSD,loy2021KRM}.

Non-conservative phenomena are central to the study of the evolution of observable traits (i.e. phenotypes) in living systems. In fact, since proliferation and death are key drivers of phenotypic evolution via natural selection, variation in the number of population members is intrinsic to evolutionary dynamics of populations structured by phenotypic traits. Thus, the total mass (i.e. the size of the population) is typically not conserved. This presents a challenge when deriving deterministic continuum models for the evolutionary dynamics of phenotype-structured populations from underlying stochastic agent-based models, which track proliferation, death, and phenotype changes of single population members. Such a challenge has so far been tackled through probabilistic methods~\cite{champagnat2005individual,champagnat2006unifying}, which make it possible to rigorously derive population-level models as the limit of corresponding agent-based models when the number of population members tends to infinity, and formal limiting procedures~\cite{ardavseva2020comparative,chisholm2016evolutionary}, which enable one to obtain the population-scale counterparts of on-lattice agent-based models when the lattice parameters tend to zero. 

In this paper, building upon kinetic theory approaches for multi-agent systems of the type advanced and adopted in~\cite{bernardi2025SAM,lorenzi2023modelling,loy2021KRM}, and generalising them to scenarios where the total mass of the system is not conserved (i.e. mass-varying multi-agent systems), we develop a modelling framework for phenotype-structured populations that makes it possible to bridge individual-level mechanisms with population-scale evolutionary dynamics. 

We start by formulating a stochastic agent-based model, which describes the dynamics of single population members that undergo proliferation, death, and phenotype changes. Then, we formally derive the corresponding mesoscopic model, which consists of an integro-differential equation (IDE) for the distribution of population members over the space of phenotypes (i.e. the phenotype distribution function), where phenotype changes are modelled via an integral kernel. Finally, considering a quasi-invariant regime of small but frequent phenotype changes, we rigorously derive a non-local partial differential equation (PDE) counterpart of this model, wherein phenotype changes are taken into account by an advection-diffusion term (i.e. a non-local Fokker–Planck-type equation). For a summary of possible applications and qualitative properties of the solutions of cognate IDEs and non-local PDEs, we refer the interested reader to Sections 3.1 and 4.2 of the recent review~\cite{lorenzi2025phenotype} and references therein.

In contrast to the limiting approach employed in~\cite{ardavseva2020comparative,chisholm2016evolutionary}, which directly leads to a non-local PDE model of evolutionary dynamics analogous to the one considered here as the population-scale counterpart of an agent-based model, our approach allows one to first obtain an IDE and then derive the non-local PDE as the corresponding quasi-invariant limit. This offers a unitary method to bridge alternative representations of the evolutionary dynamics of phenotype-structured populations -- from agent-based models through to IDE models to non-local PDEs. Furthermore, it makes the range of applications of our approach considerably wider, as the integral kernels comprised in IDE models permit a finer representation of the mechanisms underlying phenotype changes than the advection-diffusion terms contained in non-local PDE models. In addition, while non-local diffusion models for the evolutionary dynamics of phenotype-structured populations are usually formally derived from the corresponding IDE counterparts (see, for instance~\cite{hamel2020dynamics}, and references therein), here we present a rigorous derivation under assumptions that encompass a wide spectrum of biologically relevant scenarios and lead to a non-local reaction-advection-diffusion model.

The rest of the paper is organised as follows. In Section~\ref{sect:agentbased}, we formulate the agent-based model. In Section~\ref{sect:mesoscopic}, a formal derivation of the corresponding IDE model is carried out. In Section~\ref{sect:quasi-invariant}, the non-local PDE analogue of this model is rigorously derived in the limit of small but frequent phenotype changes. In Section~\ref{sect:num}, we present a sample of results of numerical simulations through which we illustrate the theoretical results obtained in the previous sections. In Section~\ref{sect:conclusions}, we conclude with a summary of key findings and a discussion of future directions. 

\section{Stochastic agent-based model}
\label{sect:agentbased}
We consider a population that is structured by a continuous variable $v\in\R$, which models the individuals’ phenotype and captures variability in the rates of proliferation and death of the population members. Extending the structured compartmental modelling approach we developed in~\cite{bernardi2025SAM,lorenzi2023modelling,loy2021KRM} to scenarios where, in addition to undergoing phenotype changes, individuals may also proliferate and die, we look at the population as a compartment labelled by the index $i=1$ and structured by the variable $v$. Moreover, we introduce an additional compartment, labelled by the index $i=0$, wherein all individuals share the same phenotype, which is modelled by a constant $v_0\in\R$.  As illustrated by the schematic in Figure~\ref{fig:intro}, we conceptualise the proliferation and death of population members as \textit{compartment switching} processes, whilst phenotype changes undergone by population members are conceptualised as \textit{structuring-variable switching} processes. 

\begin{figure}[t]
    \centering
    \includegraphics[width=0.7\linewidth]{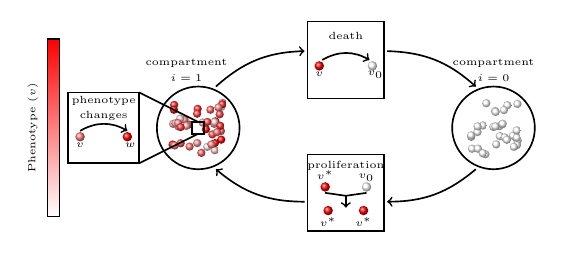}	
    \caption{Schematic illustrating the conceptual framework underlying the agent-based model}
    \label{fig:intro}
\end{figure}

In more detail, between time $t\in [0,\,+\infty)$ and time $t+\Delta{t}$, with $\Delta{t}>0$:
\begin{enumerate}[label=(\Roman*)]
\item The \textit{proliferation} of a population member is modelled as an \textit{interaction-driven compartment switching}, whereby the interaction between a focal individual in compartment $i=0$ and an individual in compartment $i^*=1$ with phenotype $v^*$ leads the focal individual to transition into compartment $i^*=1$ acquiring the phenotype $v^*$ at rate $p(v^*)$. This results in a new individual with phenotype $v^*$ being introduced in the population. \label{def:proliferation}
\item The \textit{death} of a population member is modelled as a \textit{spontaneous compartment switching} (i.e. compartment switching not driven by interactions with other individuals), whereby a focal individual in compartment $i=1$ with phenotype $v$ spontaneously transitions into compartment $i=0$ acquiring the phenotype $v_0$ at rate $d(v)$. This results in an individual with phenotype $v$ being removed from the population. \label{def:death}
\item \textit{Phenotype changes} of population members are modelled as \textit{structuring-variable switching} processes, whereby, at rate $\mu \in\R_+$, a focal individual in compartment $i=1$ switches from phenotype $v$ to phenotype $w$ with probability $M(w\vert v)$. \label{def:phen_changes}
\end{enumerate}
The functions $p$ and $d$ and the kernel $M$ satisfy the following assumptions
\begin{equation}
    p,\,d\in C(\R), \qquad p,\,d:\R\to\R_+, \qquad M\in\mathscr{P}(\R;\,C(\R)),
    \label{ass:pdM}
\end{equation}
where $\mathscr{P}(\R)$ denotes the set of probability distributions defined on the measurable space $(\R,\,\mathcal{B}(\R))$, with $\mathcal{B}(\R)$ the Borel $\sigma$-algebra of $\R$.

In this conceptual framework, individuals are regarded as indistinguishable agents and their microscopic state at time $t$ is described by the random vector $(I_{t},\,V_{t})\in\{0,1\}\times\R$. The discrete random variable $I_{t}$ specifies the compartment of an agent, while the continuous random variable $V_{t}$ models the agent's phenotype at time $t$. We let
\begin{equation}
    (I_{t},\,V_{t})\sim f(i,v,t)
    \label{eq:f}
\end{equation}
and we introduce the notation
\begin{equation}
    \rho_i(t):=\int_\R f(i,v,t)\,dv.
    \label{eq:rho}
\end{equation}
The distribution function $f:\{0,1\}\times\R\times [0,\,+\infty)\to\R_+$, being a probability density function, is such that
\begin{equation}
    \sum_{i=0}^1\int_\R f(i,v,t)\,dv=\sum_{i=0}^1\rho_i(t)=1, \qquad \forall\,t\geq 0.
    \label{eq:globalm}
\end{equation}

Moreover, in line with~\ref{def:proliferation}--\ref{def:phen_changes}, we let the microscopic states of the agents evolve in time due to \textit{interaction-driven compartment switching}, \textit{spontaneous compartment switching} and \textit{structuring-variable switching}, which are regarded as independent processes. We then model the evolution of the microscopic state $(I_{t},\,V_{t})$ of a focal agent between time $t$ and $t+\Delta{t}$ through the following system
\begin{equation}
    \begin{cases}
        I_{t+\Delta{t}}=(1-\rmS^\rmI)(1-\rmS^{\rmI\rmI})I_t+\rmS^\rmI(1-\rmS^{\rmI\rmI})I_t'+(1-\rmS^\rmI)\rmS^{\rmI\rmI}I_t''+\mathcal{O}(\Delta{t}^2), \\[3mm]
        V_{t+\Delta{t}}=(1-\rmS^\rmI)(1-\rmS^{\rmI\rmI})\left[(1-\rmS^{\rmI\rmI\rmI})V_t+\rmS^{\rmI\rmI\rmI}W_t\right]+\rmS^\rmI(1-\rmS^{\rmI\rmI})V_t' \\
        \phantom{V_{t+\Delta{t}}=} +(1-\rmS^\rmI)\rmS^{\rmI\rmI}V_t''+\mathcal{O}(\Delta{t}^2),
    \end{cases}
    \label{eq:3rand}
\end{equation}
where $\rmS^\rmI,\,\rmS^{\rmI\rmI},\,\rmS^{\rmI\rmI\rmI}\in\{0,\,1\}$ are independent Bernoulli random variables such that
\begin{align}
    \begin{aligned}[c]
        & \operatorname{Prob}{(\rmS^\rmI=1\vert I_t=i,\,V_t=v,\,I_t^\ast=i^\ast,\,V_t^\ast=v^\ast)}=\frac{s^\rmI_{i,i^\ast}(v,v^\ast)\Delta{t}}{1+s^\rmI_{i,i^\ast}(v,v^\ast)\Delta{t}}, \\
        & \operatorname{Prob}{(\rmS^{\rmI\rmI}=1\vert I_t=i,\,V_t=v)}=\frac{s^{\rmI\rmI}_i(v)\Delta{t}}{1+s^{\rmI\rmI}_i(v)\Delta{t}}, \\
        & \operatorname{Prob}{(\rmS^{\rmI\rmI\rmI}=1\vert I_t=i)}=\frac{s^{\rmI\rmI\rmI}_i\Delta{t}}{1+s^{\rmI\rmI\rmI}_i\Delta{t}}.
    \end{aligned}	
    \label{eq:Prob}
\end{align}	
In system~\eqref{eq:3rand}, the pairs of random variables
\begin{equation}
    (I'_t,\,V_t'\vert I_t^\ast,\,V_t^\ast)\sim\T^\rmI(i',v'\vert I_t^\ast,\,V_t^\ast) \qquad \text{and} \qquad
        (I''_t,\,V''_t)\sim\T^{\rmI\rmI}(i'',\,v'')
    \label{distrib:TITII}
\end{equation}
model, respectively, the new microscopic state of the focal agent when interaction-driven compartment switching and spontaneous compartment switching occurs, while the random variable 
\begin{equation}
    (W_t\vert V_t)\sim\T^{\rmI\rmI\rmI}(w\vert V_t)
    \label{distrib:TIII}
\end{equation}
models the new phenotype of the focal agent when structuring-variable switching takes place. Note also that, under assumptions~\eqref{eq:Prob}, all terms related to successions of switching events (e.g. interaction-driven switching followed by spontaneous compartment switching) occurring between time $t$ and $t+\Delta{t}$ are of order equal to or higher than $\Delta{t}^2$, and have thus been grouped into $\mathcal{O}(\Delta{t}^2)$, since they are negligible for $\Delta{t}$ chosen sufficiently small, which is the case we are interested in. Moreover, $s^\rmI_{i,i^\ast}(v,v^\ast)\geq 0$ is the rate of interaction-driven compartment switching, $s^{\rmI\rmI}_i(v)\geq 0$ is the rate of spontaneous compartment switching and $s^{\rmI\rmI\rmI}_i\geq 0$ is the rate of structuring-variable switching. In particular, to incorporate~\ref{def:proliferation}--\ref{def:phen_changes} into the model, we let 
\begin{subequations}
\begin{empheq}[]{align}
	& s^\rmI_{i,i^\ast}(v,v^\ast)\equiv s^\rmI_{i,i^\ast}(v^\ast):=\delta_{i,0}\delta_{i^\ast,1}p(v^\ast), \label{def:sI} \\[3mm]
	& s^{\rmI\rmI}_i(v):=\delta_{i,1}d(v), \label{def:sII} \\[3mm]
	& s^{\rmI\rmI\rmI}_i:=\delta_{i,1}\mu, \label{def:sIII}
\end{empheq}
\end{subequations}
and
\begin{subequations}
\begin{empheq}[]{align}
	& \T^\rmI(i',v'\vert i^\ast,\,v^\ast):=\delta_{i',i^\ast}\delta_{v^\ast}(v'), \label{def:mathcalTI} \\[3mm]
	& \T^{\rmI\rmI}(i'',v''):=\delta_{i'',0}\delta_{v_0}(v''), \label{def:mathcalTII} \\[3mm]
	& \T^{\rmI\rmI\rmI}(w\vert v):=M(w\vert v), \label{def:mathcalTIII}
\end{empheq}
\end{subequations}
where $\delta_{i,j}$ is the Kronecker delta and $\delta_{x}(y)$ is the Dirac delta centred at $y=x$.

\section{Mesoscopic model}
\label{sect:mesoscopic}
Starting from the system~\eqref{eq:3rand}, we now formally derive the mesoscopic model corresponding to the agent-based model presented in the previous section, which consists of an evolution equation for the distribution of population members over the phenotype domain.

\paragraph{General evolution equation for expectations of observables} We start by noting that, since the components of the pair $(I_{t+\Delta{t}},\,V_{t+\Delta t})$ are given by~\eqref{eq:3rand}, for any observable $\Phi:\{0,\,1\}\times\R\to\R$, namely an arbitrary test function defined for $(i,\,v)\in\{0,\,1\}\times\R$, the expectation
$$ \ave{\Phi(I_t,V_t)}:=\sum_{i=0}^1\int_\R\Phi(i,v)f(i,v,t)\,dv $$
satisfies (see, for instance,~\cite{pareschi2013BOOK})
\begin{align*}
    \resizebox{\textwidth}{!}{$\displaystyle
    \begin{aligned}
    \ave*{\Phi(I_{t+\Delta{t}},V_{t+\Delta{t}})} &= \ave*{\left(1-\frac{s^\rmI_{I_t,I^\ast_t}(V_t,V^\ast_t)\Delta{t}}{1+s^\rmI_{I_t,I^\ast_t}(V_t,V^\ast_t)\Delta{t}}\right)
        \left(1-\frac{s^{\rmI\rmI}_{I_t}(V_t)\Delta{t}}{1+s^{\rmI\rmI}_{I_t}(V_t)\Delta{t}}\right)\left(1-\frac{s^{\rmI\rmI\rmI}_{I_t}\Delta{t}}{1+s^{\rmI\rmI\rmI}_{I_t}\Delta{t}}\right)\Phi(I_t,V_t)} \\
    &\phantom{=} +\ave*{\left(1-\frac{s^\rmI_{I_t,I^\ast_t}(V_t,V^\ast_t)\Delta{t}}{1+s^\rmI_{I_t,I^\ast_t}(V_t,V^\ast_t)\Delta{t}}\right)
        \left(1-\frac{s^{\rmI\rmI}_{I_t}(V_t)\Delta{t}}{1+s^{\rmI\rmI}_{I_t}(V_t)\Delta{t}}\right)\frac{s^{\rmI\rmI\rmI}_{I_t}\Delta{t}}{1+s^{\rmI\rmI\rmI}_{I_t}\Delta{t}}
            \Phi(I_t,W_t)} \\
    &\phantom{=} +\ave*{\frac{s^\rmI_{I_t,I^\ast_t}(V_t,V^\ast_t)\Delta{t}}{1+s^\rmI_{I_t,I^\ast_t}(V_t,V^\ast_t)\Delta{t}}
        \left(1-\frac{s^{\rmI\rmI}_{I_t}(V_t)\Delta{t}}{1+s^{\rmI\rmI}_{I_t}(V_t)\Delta{t}}\right)\Phi(I'_t,V'_t)} \\
    &\phantom{=} +\ave*{\left(1-\frac{s^\rmI_{I_t,I^\ast_t}(V_t,V^\ast_t)\Delta{t}}{1+s^\rmI_{I_t,I^\ast_t}(V_t,V^\ast_t)\Delta{t}}\right)
        \frac{s^{\rmI\rmI}_{I_t}(V_t)\Delta{t}}{1+s^{\rmI\rmI}_{I_t}(V_t)\Delta{t}}\Phi(I''_t,V''_t)}+\mathcal{O}(\Delta{t}^2).
    \end{aligned}
    $}
\end{align*}
From the above equation, rearranging terms, dividing through by $\Delta{t}$ and letting $\Delta{t}\to 0^+$, we formally obtain the following evolution equation:
\begin{align}
    \begin{aligned}[b]
        \frac{d}{dt}\ave{\Phi(I_t,V_t)} &= \ave{s^\rmI_{I_t,I^\ast_t}(V_t,V^\ast_t)\left(\Phi(I_t',V_t')-\Phi(I_t,V_t)\right)} \\
        &\phantom{=} +\ave{s^{\rmI\rmI}_{I_t}(V_t)\left(\Phi(I_t'',V_t'')-\Phi(I_t,V_t)\right)} \\
        &\phantom{=} +\ave{s^{\rmI\rmI\rmI}_{I_t}\left(\Phi(I_t,W_t)-\Phi(I_t,V_t)\right)}.
    \end{aligned}
    \label{eq:8varphi}
\end{align}
Expressing the expectations $\ave{\cdot}$ in~\eqref{eq:8varphi} in terms of sums and integrals against the probability density function $f$, recalling~\eqref{distrib:TITII} and~\eqref{distrib:TIII}, and introducing the more compact notation $f_i(v,t):= f(i,v,t)$, the evolution equation~\eqref{eq:8varphi} can then be rewritten as
\begin{align}
    \resizebox{.93\textwidth}{!}{$\displaystyle
    \begin{aligned}[b]
        \frac{d}{dt}  &\sum_{i=0}^1\int_\R\Phi(i,v)f_i(v,t)\,dv= \\
        &= \underbrace{\sum_{i=0}^1\int_\R\Phi(i,v)\left(\sum_{i'=0}^1\sum_{i^\ast=0}^1\int_\R\int_\R s^\rmI_{i',i^\ast}(v',v^\ast)\T^\rmI(i,v\vert i^\ast,\,v^\ast)
            f_{i'}(v',t)f_{i^\ast}(v^\ast,t)\,dv'\,dv^\ast\right)dv}_{=:\tcircle{i}} \\
        &\phantom{=} -\underbrace{\sum_{i=0}^1\int_\R\Phi(i,v)\left(\sum_{i^\ast=0}^1\int_\R s^\rmI_{i,i^\ast}(v,v^\ast)f_{i^\ast}(v^\ast,t)\,dv^\ast\right)f_i(v,t)\,dv}_{=:\tcircle{ii}} \\ 
        &\phantom{=} +\underbrace{\sum_{i=0}^1\int_\R\Phi(i,v)\left(\sum_{i''=0}^1\int_\R s^{\rmI\rmI}_{i''}(v'')\T^{\rmI\rmI}(i,v)
            f_{i''}(v'',t)\,dv''\right)dv}_{=:\tcircle{iii}} \\
        &\phantom{=} -\underbrace{\sum_{i=0}^1\int_\R\Phi(i,v)s^{\rmI\rmI}_i(v)f_i(v,t)\,dv}_{=:\tcircle{iv}} \\
        &\phantom{=} +\underbrace{\sum_{i=0}^1\int_\R\Phi(i,v)\left(\int_\R s^{\rmI\rmI\rmI}_i\T^{\rmI\rmI\rmI}(v\vert w)f_i(w,t)\,dw\right)dv}_{=:\tcircle{v}} \\
        &\phantom{=} -\underbrace{\sum_{i=0}^1\int_\R\Phi(i,v)s^{\rmI\rmI\rmI}_if_i(v,t)\,dv}_{=:\tcircle{vi}}.
    \end{aligned}
    $}
    \label{eq:generic_form}
\end{align}

From~\eqref{eq:generic_form} we now derive the evolution equations for $f_0(v,t)$ and $f_1(v,t)$, from which we shall then derive an evolution equation for the distribution of population members over the phenotype domain, that is, the mesoscopic counterpart of the agent-based model. To do so, we choose 
\begin{equation}
    \Phi(i,v):=\delta_{i,l}\varphi(v),
    \label{eq:test_fun}
\end{equation}
where $l\in\{0,\,1\}$ and $\varphi$ is an observable quantity (test function) depending only on $v$. For ease of presentation, we carry out calculations for the left-hand side (LHS) of~\eqref{eq:generic_form} and the terms \tcircle{i}-\tcircle{vi} on the right-hand side (RHS) of~\eqref{eq:generic_form} one by one, first for $l=0$ and then for $l=1$.

\paragraph{Evolution equation for~\texorpdfstring{$\boldsymbol{f_0}$}{}} First, substituting~\eqref{eq:test_fun} with $l=0$ into the LHS of~\eqref{eq:generic_form} yields
\begin{equation}
    \frac{d}{dt}\sum_{i=0}^1\int_\R\Phi(i,v)f_i(v,t)\,dv=\int_\R\varphi(v)\partial_tf_0(v,t)\,dv.
    \label{eq:lhsterm0}
\end{equation}

Then, substituting~\eqref{eq:test_fun} with $l=0$ along with~\eqref{def:sI} and~\eqref{def:mathcalTI} into \tcircle{i} and \tcircle{ii} on the RHS of~\eqref{eq:generic_form} yields 
\begin{align}
    \begin{aligned}[b]
        \tcircle{i}+\tcircle{ii} &= \int_\R\varphi(v)\left(\sum_{i'=0}^1\sum_{i^\ast=0}^1\int_\R\int_\R s^\rmI_{i',i^\ast}(v',v^\ast)\T^\rmI(0,v\vert i^\ast,\,v^\ast)
            f_{i'}(v',t)f_{i^\ast}(v^\ast,t)\,dv'\,dv^\ast\right)dv \\
        &\phantom{=} -\int_\R\varphi(v)\left(\sum_{i^\ast=0}^1\int_\R s^\rmI_{0,i^\ast}(v,v^\ast)f_{i^\ast}(v^\ast,t)\,dv^\ast\right)f_0(v,t)\,dv \\ 
        &= -\int_\R\varphi(v)\left(\int_\R p(v^\ast)f_1(v^\ast,t)\,dv^\ast\right)f_0(v,t)\,dv. 
    \end{aligned}
    \label{eq:rhsI10}
\end{align}

Next, substituting~\eqref{eq:test_fun} with $l=0$ along with~\eqref{def:sII} and~\eqref{def:mathcalTII} into \tcircle{iii} and \tcircle{iv} on the RHS of~\eqref{eq:generic_form} yields 
\begin{align}
    \begin{aligned}[b]
        \tcircle{iii}+\tcircle{iv} &= \int_\R\varphi(v)\left(\sum_{i''=0}^1\int_\R s^{\rmI\rmI}_{i''}(v'')\T^{\rmI\rmI}(0,v)
            f_{i^{\prime \prime}}(v'',t)\,dv''\right)dv \\
        &\phantom{=} -\int_\R\varphi(v)s^{\rmI\rmI}_0(v)f_0(v,t)\,dv \\
        &= \int_\R\varphi(v)\left(\int_\R d(v'')f_1(v'',t)\,dv''\right)\delta_{v^0}(v)\,dv.
    \end{aligned}
    \label{eq:rhsI20}
\end{align}

Finally, substituting~\eqref{eq:test_fun} with $l=0$ into \tcircle{v} and \tcircle{vi} on the RHS of~\eqref{eq:generic_form} and using the fact that $s^{\rm III}_{0}=0$ (cf. definition~\eqref{def:sIII}) yields 
\begin{equation}
    \tcircle{v}+\tcircle{vi}=\int_\R\varphi(v)\left(\int_\R s^{\rmI\rmI\rmI}_{0}\T^{\rmI\rmI\rmI}(v\vert w)f_0(w,t)\,dw\right)dv
        -\int_\R\varphi(v)s^{\rmI\rmI\rmI}_0f_0(v,t)\,dv=0.
    \label{eq:rhsI30}
\end{equation}

Combining~\eqref{eq:lhsterm0}--\eqref{eq:rhsI30} and rearranging terms we obtain, owing to the arbitrariness of $\varphi$, the following evolution equation for $f_0$:
\begin{equation}
    \partial_tf_0(v,t)=\int_\R d(v'')f_1(v'',t)\,dv''\delta_{v^0}(v)
        -\left(\int_\R p(v^\ast)f_1(v^\ast,t)\,dv^\ast\right)f_0(v,t).
    \label{eq:wfeqf0}
\end{equation}
Note that integrating the differential equation~\eqref{eq:wfeqf0} over $\R$ and recalling~\eqref{eq:rho} we find
$$ \frac{d\rho_0}{dt}=\int_\R d(v'')f_1(v'',t)\,dv''
    -\left(\int_\R p(v^\ast)f_1(v^\ast,t)\,dv^\ast\right)\rho_0, $$
that is,
\begin{align}
    \begin{aligned}[b]
        \rho_0(t) &= \rho_0(0)e^{-\int_0^t\int_\R p(v^\ast)f_1(v^\ast,s)\,dv^\ast\,ds} \\
        &\phantom{=} +\int_0^t\left(\int_\R d(v'')f_1(v'',s)\,dv''\right)
            e^{-\int_s^t\int_\R p(v^\ast)f_1(v^\ast,\tau)\,dv^\ast\,d\tau}\,ds.
    \end{aligned}
    \label{eq:rho0_semiexp}
\end{align}
Moreover, solving the differential equation~\eqref{eq:wfeqf0} subject to the initial condition $f_0(v,0)=\rho_0(0)\delta_{v^0}(v)$ and substituting~\eqref{eq:rho0_semiexp} into the expression for $f_0$ so obtained yields $f_0(v,t) = \rho_0(t) \delta_{v^0}(v)$ for all $t \geq 0$, consistently with the fact that, by construction, all individuals in the compartment $i=0$ are expected to share the same phenotype $v^0$ at all times.

\paragraph{Evolution equation for~\texorpdfstring{$\boldsymbol{f_1}$}{}} First, substituting~\eqref{eq:test_fun} with $l=1$ into the LHS of~\eqref{eq:generic_form} yields
\begin{equation}
    \frac{d}{dt}\sum_{i=0}^1\int_\R\Phi(i,v)f_i(v,t)\,dv=\int_\R\varphi(v)\partial_tf_1(v,t)\,dv.
    \label{eq:lhsterm1}
\end{equation}

Then, substituting~\eqref{eq:test_fun} with $l=1$ along with~\eqref{def:sI} and~\eqref{def:mathcalTI} into \tcircle{i} and \tcircle{ii} on the RHS of~\eqref{eq:generic_form}, and recalling~\eqref{eq:rho}, yields 
\begin{align}
    \begin{aligned}[b]
        \tcircle{i}+\tcircle{ii} &= \int_\R\varphi(v)\left(\sum_{i'=0}^1\sum_{i^\ast=0}^1\int_\R\int_\R s^\rmI_{i',i^\ast}(v',v^\ast)\T^\rmI(1,v\vert i^\ast,\,v^\ast)
            f_{i'}(v',t)f_{i^\ast}(v^\ast,t)\,dv'\,dv^\ast\right)dv \\
        &\phantom{=} -\int_\R\varphi(v)\left(\sum_{i^\ast=0}^1\int_\R s^\rmI_{1,i^\ast}(v,v^\ast)f_{i^\ast}(v^\ast,t)\,dv^\ast\right)f_1(v,t)\,dv \\ 
        &= \int_\R\varphi(v)p(v)\rho_0(t)f_1(v,t)\,dv.
    \end{aligned}
    \label{eq:rhsI11}
\end{align}

Next, substituting~\eqref{eq:test_fun} with $l=1$ along with~\eqref{def:sII} and~\eqref{def:mathcalTII} into \tcircle{iii} and \tcircle{iv} on the RHS of~\eqref{eq:generic_form} yields 
\begin{align}
    \begin{aligned}[b]
        \tcircle{iii}+\tcircle{iv} &= \int_\R\varphi(v)\left(\sum_{i''=0}^1\int_\R s^{\rmI\rmI}_{i''}(v'')\T^{\rmI\rmI}(1,v)
            f_{i''}(v'',t)\,dv''\right)dv \\
        &\phantom{=} -\int_\R\varphi(v)s^{\rmI\rmI}_1(v)f_1(v,t)\,dv \\
        &= -\int_\R\varphi(v)d(v)f_1(v,t)\,dv.
    \end{aligned}
    \label{eq:rhsI21}
\end{align}

Finally, substituting~\eqref{eq:test_fun} with $l=1$ along with~\eqref{def:sIII} and~\eqref{def:mathcalTIII} into \tcircle{v} and \tcircle{vi} on the RHS of~\eqref{eq:generic_form} yields 
\begin{align}
    \begin{aligned}[b]
        \tcircle{v}+\tcircle{vi} &= \int_\R\varphi(v)\left(\int_\R s^{\rmI\rmI\rmI}_1\T^{\rmI\rmI\rmI}(v\vert w)f_1(w,t)\,dw\right)dv
            -\int_\R\varphi(v)s^{\rmI\rmI\rmI}_1f_1(v,t)\,dv \\
        &= \mu\int_\R\varphi(v)\left(\int_\R M(v\vert w)f_1(w,t)\,dw\right)dv-\mu\int_\R\varphi(v)f_1(v,t)\,dv.
    \end{aligned}
    \label{eq:rhsI31}
\end{align}

Combining~\eqref{eq:lhsterm1}-\eqref{eq:rhsI31} and rearranging terms we obtain, invoking again the arbitrariness of $\varphi$, the following evolution equation for $f_1$:
\begin{equation}
    \partial_tf_1(v,t)=\bigl(p(v)\rho_0(t)-d(v)\bigr)f_1(v,t)+\mu\left(\int_\R M(v\vert w)f_1(w,t)\,dw-f_1(v,t)\right).
    \label{eq:wfeqf1}
\end{equation}

\paragraph{Mass conservation} Integrating the differential equation~\eqref{eq:wfeqf0} and the IDE~\eqref{eq:wfeqf1} over $\R$, summing together the resulting differential equations, recalling~\eqref{eq:rho} and using the fact that (cf. assumption~\eqref{ass:pdM} on the kernel $M$)
$$ \int_\R M(v\vert w)\,dv=1, \qquad \forall\,w\in\R $$
we find
\begin{align*}
    \dfrac{d}{dt}\bigl(\rho_0(t)+\rho_1(t)\bigr) &= \int_\R d(v'')f_1(v'',t)\,dv''
        -\left(\int_\R p(v^\ast)f_1(v^\ast,t)\,dv^\ast\right)\rho_0(t) \\
    &\phantom{=} +\int_\R\bigl(p(v)\rho_0(t)-d(v)\bigr)f_1(v,t)\,dv=0,
\end{align*}
which implies $\rho_0(t)+\rho_1(t)=\rho_0(0)+\rho_1(0)$ for all $t>0$. Hence, choosing, consistently with~\eqref{eq:globalm}, initial data such that $\rho_0(0)+\rho_1(0)=1$, we have $\rho_0(t)=1-\rho_1(t)$ for all $t>0$. Substituting this into the IDE~\eqref{eq:wfeqf1} we obtain
\begin{equation}
    \partial_tf_1(v,t)=\bigl(p(v)(1-\rho_1(t))-d(v)\bigr)f_1(v,t)+\mu\left(\int_\R M(v\vert w)f_1(w,t)\,dw-f_1(v,t)\right).
    \label{eq:wfeqf1b}
\end{equation}

\paragraph{Mesoscopic model} In the case where $p$ is constant, say $p(v)=p_0\geq 0$, which is the case we shall be focussing on in the remainder of the paper, choosing, without loss of generality, $p_0=1$, introducing the notation below for the \emph{net proliferation rate} (i.e. the difference between the rate of proliferation and the rate of death) 
\begin{equation}
    r(v):=1-d(v),
\label{def:r}
\end{equation}
and renaming $f_1$ to $f$ and $\rho_1$ to $\rho$, from the IDE~\eqref{eq:wfeqf1b} we obtain the following evolution equation for the distribution of population members over the phenotype domain:
\begin{equation}
\begin{cases}
    \partial_tf=\bigl(r(v)-\rho\bigr)f+\mu\left(\displaystyle{\int_\R}M(v\vert w)f(w,t)\,dw-f\right), \\[3mm]
    \rho(t):=\displaystyle{\int_\R}f(v,t)\,dv.
    \label{eq:IDE}
\end{cases}
\end{equation}

Notice that, owing to~\eqref{ass:pdM}, the definition~\eqref{def:r} implies
\begin{subequations}
\begin{equation}
    r\in C(\R), \qquad r:\R\to\R, \qquad r(v)\leq 1\quad \forall\,v\in\R.
\end{equation}
%to be definite, in the following we shall 
In particular, without loss of generality, throughout the paper we shall assume
\begin{equation}
    \sup_{v\in\R}{r(v)}=1.
\end{equation}
\label{ass:r_with_sup}
\end{subequations}

\section{Quasi-invariant phenotype change regime}
\label{sect:quasi-invariant}
In this section, we consider the regime of \textit{small} but \textit{frequent} phenotype changes, which in the jargon of the kinetic theory of multi-agents systems, cf.~\cite{pareschi2013BOOK}, is called a \textit{quasi-invariant regime}. In this regime, the kernel $M$ is scaled by means of a scaling parameter $0<\varepsilon\ll 1$ in such a way that:
\begin{enumerate}[label=(\roman*)]
\item on the one hand, the phenotype acquired, on average, after a phenotype change, $v$, is a small perturbation of the original phenotype, $w$; \label{item:q.i.-mean}
\item on the other hand, the variability of the newly acquired phenotype, $v$, is small. \label{item:q.i.-variance}
\end{enumerate}

Building upon the scaling for IDE model of evolutionary dynamics in phenotype-structured populations considered in~\cite{barles2009concentration,diekmann2005dynamics,lorz2013populational}, this can be obtained by scaling $M$ as
\begin{equation}
    M(v\vert w) \to M_\varepsilon(v\vert w):=\frac{1}{\varepsilon}\cM\left(\frac{v-w}{\varepsilon};\,\alpha\varepsilon,\beta\right),
    \label{eq:M.scaling}
\end{equation}
where $\alpha\in\R$, $\beta\in\R_+$ are given model parameters and $\cM=\cM(z;\,\xi,\varsigma^2)$ is a two-parameter probability distribution w.r.t. $z\in\R$ with mean $\xi\in\R$ and variance $\varsigma^2\in\R_+$, which we further assume to have bounded third order moment. In detail:
\begin{enumerate}[label=(H\arabic*)]
\item $\displaystyle{\int_\R}\cM(z;\,\xi,\varsigma^2)\,dz=1$ for every $(\xi,\,\varsigma^2)\in\R\times\R_+$; \label{ass:M.mass}
\item $\displaystyle{\int_\R}z\cM(z;\,\xi,\varsigma^2)\,dz=\xi$ for every $\varsigma^2\in\R_+$; \label{ass:M.mean}
\item $\displaystyle{\int_\R}(z-\xi)^2\cM(z;\,\xi,\varsigma^2)\,dz=\varsigma^2$ for every $\xi\in\R$; \label{ass:M.var}
\item $\displaystyle{\int_\R}\abs{z}^3\cM(z;\,\xi,\varsigma^2)\,dz<+\infty$ for every $(\xi,\,\varsigma^2)\in\R\times\R_+$. \label{ass:M.3rd_mom}
\end{enumerate}
Combining \ref{ass:M.mean} and~\ref{ass:M.var} we also deduce
\begin{equation}
    \int_\R z^2\cM(z;\,\xi,\varsigma^2)\,dz=\varsigma^2+\xi^2.
    \label{eq:M.energy}
\end{equation}
On the whole, from~\eqref{eq:M.scaling} and~\ref{ass:M.mass}--\ref{ass:M.var} it is not difficult to verify that, for every $\varepsilon>0$, the following properties hold
\begin{equation}
    \int_\R M_\varepsilon(v\vert w)\,dv=1, \quad \int_\R vM_\varepsilon(v\vert w)\,dv=w+\alpha\varepsilon^2, \quad
        \int_\R\bigl(v-w-\alpha\varepsilon^2\bigr)^2M_\varepsilon(v\vert w)\,dv=\beta\varepsilon^2,
    \label{eq:moments_Meps}
\end{equation}
which confirm that $M_\varepsilon$ describes a regime of small phenotype changes as specified by~\ref{item:q.i.-mean},~\ref{item:q.i.-variance} above. In a moment, we shall examine also the significance of assumption~\ref{ass:M.3rd_mom}.

Scaling $M$ to $M_\varepsilon$ in~\eqref{eq:IDE}, we obtain
\begin{equation}
    \partial_tf_\varepsilon=\bigl(r(v)-\rho_\varepsilon\bigr)f_\varepsilon
        +\mu\left(\int_\R M_\varepsilon(v\vert w)f_\varepsilon(w,t)\,dw-f_\varepsilon\right),
    \label{eq:IDE.scaled}
\end{equation}
where we have denoted by $f_\varepsilon$ the phenotype distribution function parametrised by $\varepsilon$ in consequence of the scaling~\eqref{eq:M.scaling} and by $\rho_\varepsilon(t)=\int_\R f_\varepsilon(v,t)\,dv$ the corresponding density. In weak form, this reads as
\begin{align}
    \resizebox{.93\textwidth}{!}{$\displaystyle
    \begin{aligned}[b]
        \frac{d}{dt}\int_\R\varphi(v)f_\varepsilon(v,t)\,dv &= \int_\R\varphi(v)\bigl(r(v)-\rho_\varepsilon\bigr)f_\varepsilon(v,t)\,dv \\
        &\phantom{=} +\mu\left(\int_\R\int_\R\varphi(v)M_\varepsilon(v\vert w)f_\varepsilon(w,t)\,dw\,dv-\int_\R\varphi(v)f_\varepsilon(v,t)\,dv\right) \\
        &= \int_\R\varphi(v)\bigl(r(v)-\rho_\varepsilon\bigr)f_\varepsilon(v,t)\,dv \\
        &\phantom{=} +\mu\left\{\frac{1}{\varepsilon}\int_\R\left[\int_\R\varphi(v)
            \cM\left(\frac{v-w}{\varepsilon};\,\alpha\varepsilon,\beta\right)dv\right]f_\varepsilon(w,t)\,dw
                -\int_\R\varphi(v)f_\varepsilon(v,t)\,dv\right\},
    \end{aligned}
    $}
    \label{eq:IDE.weak_scaled}
\end{align}
where $\varphi=\varphi(v)$ is a generic observable quantity (test function).

We shall consider non-negative solutions to~\eqref{eq:IDE.scaled},~\eqref{eq:IDE.weak_scaled}, consistently with the interpretation of $f_\varepsilon$ as the statistical distribution of the phenotype $v$ over time. This property is ensured by the following result:
\begin{proposition}[Non-negativity of $f_\varepsilon$] \label{prop:non-neg.feps}
Under~\eqref{ass:r_with_sup}, if $f_\varepsilon(v,0)\geq 0$ for a.e. $v\in\R$ then $f_\varepsilon(v,t)\geq 0$ for a.e. $v\in\R$ and every $t>0$.
\end{proposition}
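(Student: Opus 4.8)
The plan is to recast \eqref{eq:IDE.scaled} as a linear non-local evolution problem---with $\rho_\varepsilon$ regarded as a fixed, locally bounded coefficient determined by the (assumed) solution---in which the only term capable of generating a sign change is handled by an integrating factor, while the non-local term is manifestly positivity-preserving. First I would isolate the local and non-local parts by writing
\[
    \partial_t f_\varepsilon(v,t) = b_\varepsilon(v,t)\,f_\varepsilon(v,t) + \mu\int_\R M_\varepsilon(v\vert w)f_\varepsilon(w,t)\,dw, \qquad b_\varepsilon(v,t):=r(v)-\rho_\varepsilon(t)-\mu,
\]
where the loss term $-\mu f_\varepsilon$ has been absorbed into the local coefficient $b_\varepsilon$. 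Two structural facts are the backbone of the argument: since $\cM$ is a probability density it is non-negative, so by the scaling \eqref{eq:M.scaling} the kernel $M_\varepsilon(v\vert w)\geq 0$ for all $v,w$ and the non-local operator maps non-negative functions to non-negative functions; moreover, assumption \eqref{ass:r_with_sup} gives $r(v)\le 1$, hence $b_\varepsilon$ is bounded from above on $\R\times[0,T]$ for every $T>0$ (its possible unboundedness from below, coming from $-d(v)$, being harmless for what follows).

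Since \eqref{eq:IDE.scaled} contains no transport term in $v$, the characteristics are trivial and, for each fixed $v$, the equation is a scalar linear ODE in $t$ with the non-local term as a source. Solving it by the integrating factor $\exp\bigl(-\int_0^t b_\varepsilon(v,s)\,ds\bigr)$ yields the mild (Duhamel) formulation
\[
    f_\varepsilon(v,t) = f_\varepsilon(v,0)\,e^{\int_0^t b_\varepsilon(v,s)\,ds}
        + \mu\int_0^t e^{\int_\tau^t b_\varepsilon(v,s)\,ds}\left(\int_\R M_\varepsilon(v\vert w)f_\varepsilon(w,\tau)\,dw\right)d\tau .
\]
The decisive observation is that the integrating factor $e^{\int_\tau^t b_\varepsilon(v,s)\,ds}$ is strictly positive irrespective of the sign of its exponent, so every ingredient on the right-hand side is non-negative as soon as $f_\varepsilon(\cdot,0)\ge 0$ and $f_\varepsilon(\cdot,\tau)\ge 0$ for $\tau<t$.

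I would then turn this self-referential identity into a genuine proof by a positivity-preserving fixed-point scheme. On a time interval $[0,T]$ the right-hand side defines a map $\mathcal{F}$ on $C([0,T];L^1(\R))$; because $\int_\R M_\varepsilon(v\vert w)\,dv=1$ (cf. \eqref{eq:moments_Meps}) and the integrating factor is bounded on $\R\times[0,T]$ thanks to $r\le 1$, a Fubini estimate shows that $\mathcal{F}$ is a contraction in a suitable (e.g. exponentially time-weighted) norm, so its iterates converge to the unique solution. Starting the iteration from the non-negative function $f_\varepsilon(v,0)\,e^{\int_0^t b_\varepsilon(v,s)\,ds}$, an immediate induction shows that $\mathcal{F}$ sends the cone of a.e. non-negative functions into itself; since this cone is closed in $L^1$, the limit---which by uniqueness coincides with $f_\varepsilon$---is non-negative on $[0,T]$. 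As the length $T$ can be taken uniform, iterating the argument on consecutive intervals propagates non-negativity to every $t>0$.

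The step I expect to be the main obstacle is making this last argument fully rigorous: setting up the fixed point in a function space that simultaneously accommodates the non-local gain term and the coefficient $b_\varepsilon$, which is only bounded from above because $r$ may tend to $-\infty$. This precludes a crude global Lipschitz bound, and one must instead exploit the sign---so that $e^{\int_\tau^t b_\varepsilon}$ is merely small rather than large---together with the identity $\int_\R M_\varepsilon(v\vert w)\,dv=1$ to close the estimate; the remaining care concerns the dependence of $\rho_\varepsilon$ on $f_\varepsilon$, which is circumvented by freezing $\rho_\varepsilon$ at the assumed solution and invoking uniqueness of the resulting linear problem.
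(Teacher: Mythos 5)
Your argument is correct in substance but follows a genuinely different route from the paper. The paper proves the statement by an \emph{a priori} energy estimate on the negative part: it multiplies \eqref{eq:IDE.scaled} by $-2f_\varepsilon^-$, integrates in $v$, and controls the cross term $\int_\R\int_\R M_\varepsilon(v\vert w)f_\varepsilon^-(w,t)f_\varepsilon^-(v,t)\,dw\,dv$ by two successive applications of the Cauchy--Schwarz inequality (first against the probability measure $M_\varepsilon(v\vert w)\,dv$, then against $dw$), which yields $\frac{d}{dt}\int_\R(f_\varepsilon^-)^2\,dv\leq 2(1-\rho_\varepsilon)\int_\R(f_\varepsilon^-)^2\,dv$ and hence, by Gr\"{o}nwall, $f_\varepsilon^-\equiv 0$. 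You instead freeze $\rho_\varepsilon$ at the assumed solution, write the resulting linear equation in Duhamel form with the positive integrating factor $e^{\int b_\varepsilon}$, and propagate non-negativity through a Picard iteration whose limit is identified with $f_\varepsilon$ by uniqueness. Your route buys a constructive representation and makes the mechanism of positivity transparent (non-negative gain term, loss term absorbed into a positive exponential); its cost is that it requires $f_\varepsilon$ to coincide with the unique mild solution of the frozen linear problem in $C([0,T];L^1(\R))$, i.e.\ a well-posedness statement that the paper never has to formulate, whereas the paper's truncation argument is a pure estimate on the assumed solution with no fixed-point machinery. Both arguments hinge on the same two structural facts --- $M_\varepsilon(\cdot\vert w)$ is a probability density and $r\leq 1$ from \eqref{ass:r_with_sup} --- and both implicitly take $\rho_\varepsilon$ to be locally bounded in time (a fact only established afterwards, in Proposition~\ref{prop:bound.rho_eps}, using the non-negativity being proved), so neither is more demanding on that score; your acknowledged reliance on uniqueness of the linear problem is the only genuinely additional hypothesis, and it is a mild one in the regularity framework the paper works in.
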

\begin{proof}
Let $f^\pm_\varepsilon(v,t):=\max\{0,\,\pm f_\varepsilon(v,t)\}\geq 0$ be the positive and negative part, respectively, of $f_\varepsilon$. Writing $f_\varepsilon=f^+_\varepsilon-f^-_\varepsilon$ and noting that $f^+_\varepsilon$, $f^-_\varepsilon$ have essentially disjoint supports, if we multiply~\eqref{eq:IDE.scaled} by $-2f^-_\varepsilon$ we find
$$ \partial_t(f^-_\varepsilon)^2=2\bigl(r(v)-\rho_\varepsilon\bigr)(f^-_\varepsilon)^2
    -2\mu\left(f^-_\varepsilon\int_\R M_\varepsilon(v\vert w)f_\varepsilon(w,t)\,dw+(f^-_\varepsilon)^2\right), $$
from which, integrating over $v\in\R$,
\begin{align*}
    \frac{d}{dt}\int_\R(f^-_\varepsilon(v,t))^2\,dv &= 2\int_\R r(v)(f^-_\varepsilon(v,t))^2\,dv
        -2(\rho_\varepsilon+\mu)\int_\R(f^-_\varepsilon(v,t))^2\,dv \\
    &\phantom{=} -2\mu\int_\R\int_\R M_\varepsilon(v\vert w)f^+_\varepsilon(w,t)f^-_\varepsilon(v,t)\,dw\,dv \\
    &\phantom{=} +2\mu\int_\R\int_\R M_\varepsilon(v\vert w)f^-_\varepsilon(w,t)f^-_\varepsilon(v,t)\,dw\,dv \\
    &\leq 2(1-\rho_\varepsilon-\mu)\int_\R(f^-_\varepsilon(v,t))^2\,dv
        +2\mu\int_\R\int_\R M_\varepsilon(v\vert w)f^-_\varepsilon(w,t)f^-_\varepsilon(v,t)\,dw\,dv.
\end{align*}
Applying repeatedly the Cauchy--Schwarz inequality, first with respect to the probability measure $M_\varepsilon(v\vert w)dv$ and then with respect to the Lebesgue measure $dw$, we obtain
\begin{align*}
    \resizebox{\textwidth}{!}{$\displaystyle
    \begin{aligned}
    \int_\R\int_\R M_\varepsilon(v\vert w)f^-_\varepsilon(w,t)f^-_\varepsilon(v,t)\,dw\,dv &=
        \int_\R\left(\int_\R M_\varepsilon(v\vert w)f^-_\varepsilon(v,t)\,dv\right)f^-_\varepsilon(w,t)\,dw \\
    &\leq \int_\R\left(\int_\R M_\varepsilon(v\vert w)(f^-_\varepsilon(v,t))^2\,dv\right)^{1/2}f^-_\varepsilon(w,t)\,dw \\
    &\leq \left(\int_\R\int_\R M_\varepsilon(v\vert w)(f^-_\varepsilon(v,t))^2\,dv\,dw\right)^{1/2}
        \left(\int_\R (f^-_\varepsilon(w,t))^2\,dw\right)^{1/2} \\
    &= \int_\R(f^-_\varepsilon(v,t))^2\,dv,
    \end{aligned}
    $}
\end{align*}
and thus, finally,
$$ \frac{d}{dt}\int_\R(f^-_\varepsilon(v,t))^2\,dv\leq 2(1-\rho_\varepsilon)\int_\R(f^-_\varepsilon(v,t))^2\,dv. $$
Since $\int_\R(f^-_\varepsilon(v,0))^2\,dv=0$ because $f_\varepsilon(v,0)\geq 0$ a.e. by assumption, the above differential inequality implies
$$ \int_\R(f^-_\varepsilon(v,t))^2\,dv=0, \qquad \forall\,t>0. $$
Consequently, $f^-_\varepsilon(v,t)=0$ for a.e. $v\in\R$ and every $t>0$, which concludes the proof.
\end{proof}

The non-negativity of $f_\varepsilon$ entails:
\begin{proposition}[Non-negativity and boundedness of $\rho_\varepsilon$] \label{prop:bound.rho_eps}
Under~\eqref{ass:r_with_sup}, if $\rho_\varepsilon(0)=\rho^0\leq 1$, where $\rho^0>0$ is taken independent of $\varepsilon$, then $0\leq\rho_\varepsilon(t)\leq 1$ for every $\varepsilon>0$ and every $t>0$.
\end{proposition}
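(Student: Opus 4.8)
The plan is to reduce everything to a scalar differential relation for $\rho_\varepsilon$ and then close it by an elementary comparison. First I would integrate the scaled IDE~\eqref{eq:IDE.scaled} over $v\in\R$. The decisive observation is that the phenotype-change term cancels: by Fubini's theorem (legitimate since $f_\varepsilon\geq 0$ by Proposition~\ref{prop:non-neg.feps} and $M_\varepsilon\geq 0$), together with the normalisation $\int_\R M_\varepsilon(v\vert w)\,dv=1$ recorded in~\eqref{eq:moments_Meps}, one has $\int_\R\int_\R M_\varepsilon(v\vert w)f_\varepsilon(w,t)\,dw\,dv=\int_\R f_\varepsilon(w,t)\,dw=\rho_\varepsilon(t)$, so the bracket multiplied by $\mu$ vanishes. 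This leaves the identity
\begin{equation*}
    \frac{d\rho_\varepsilon}{dt}=\int_\R r(v)f_\varepsilon(v,t)\,dv-\rho_\varepsilon^2(t).
\end{equation*}

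The lower bound is then immediate and does not even use this equation: by Proposition~\ref{prop:non-neg.feps} the solution satisfies $f_\varepsilon(v,t)\geq 0$ for a.e.\ $v$ and every $t>0$, whence $\rho_\varepsilon(t)=\int_\R f_\varepsilon(v,t)\,dv\geq 0$ at once. (In particular, the hypothesis $\rho^0>0$ is not needed for this inequality; it only guarantees a non-trivial population.) For the upper bound I would exploit the sign condition $r(v)\leq 1$ from~\eqref{ass:r_with_sup} together with $f_\varepsilon\geq 0$ to estimate $\int_\R r(v)f_\varepsilon(v,t)\,dv\leq\int_\R f_\varepsilon(v,t)\,dv=\rho_\varepsilon(t)$, turning the identity above into the logistic differential inequality $\frac{d\rho_\varepsilon}{dt}\leq\rho_\varepsilon(1-\rho_\varepsilon)$.

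Finally I would close the upper bound by a barrier argument at the level $1$. Suppose, for contradiction, that $\rho_\varepsilon(t^\ast)>1$ for some $t^\ast>0$; since $\rho_\varepsilon(0)=\rho^0\leq 1$ and $\rho_\varepsilon$ is continuous, there is a last time $t_0\in[0,t^\ast)$ with $\rho_\varepsilon(t_0)=1$ and $\rho_\varepsilon(t)>1$ on $(t_0,t^\ast]$. On that interval $\rho_\varepsilon(1-\rho_\varepsilon)<0$, so the inequality forces $\frac{d\rho_\varepsilon}{dt}<0$ there; integrating from $t_0$ gives $\rho_\varepsilon(t)<\rho_\varepsilon(t_0)=1$ for $t$ just above $t_0$, contradicting $\rho_\varepsilon(t)>1$. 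Hence $\rho_\varepsilon(t)\leq 1$ for all $t$. (Equivalently, one may compare $\rho_\varepsilon$ with the solution of $\dot u=u(1-u)$, $u(0)=\rho^0\leq 1$, whose right-hand side is locally Lipschitz, invoking the standard ODE comparison principle and $u(t)\leq 1$.)

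I do not expect a genuine obstacle: the conceptual heart of the argument is simply the cancellation of the $\mu$-term under integration, which reflects the mass-preserving character of the phenotype-change kernel $M_\varepsilon$. The only point needing a little care is the barrier/comparison step, where one must rule out that a subsolution of the logistic equation starting at or below the stable equilibrium $1$ ever crosses it; the first-crossing-time argument above handles this cleanly without any Lipschitz hypothesis on $r$, and in particular uses only the one-sided bound $r\leq 1$, so it is insensitive to how negative $r$ may become.
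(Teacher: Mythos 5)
Your proposal is correct and follows essentially the same route as the paper: both reduce the statement to the scalar identity $\frac{d\rho_\varepsilon}{dt}=\int_\R r(v)f_\varepsilon(v,t)\,dv-\rho_\varepsilon^2$ (obtained by taking $\varphi\equiv 1$, with the $\mu$-term cancelling by the normalisation of $M_\varepsilon$) and then to the logistic inequality $\frac{d\rho_\varepsilon}{dt}\le\rho_\varepsilon(1-\rho_\varepsilon)$ via $r\le 1$ and $f_\varepsilon\ge 0$. The only cosmetic differences are that you get $\rho_\varepsilon\ge 0$ directly from the non-negativity of $f_\varepsilon$, whereas the paper runs a slightly roundabout argument on $(\rho_\varepsilon^-)^2$, and that you close the upper bound by a first-crossing barrier argument where the paper integrates the logistic inequality explicitly to obtain $\rho_\varepsilon(t)\le\rho^0/\bigl((1-\rho^0)e^{-t}+\rho^0\bigr)$; both closures are valid.
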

\begin{proof}
With $\varphi(v)=1$ in~\eqref{eq:IDE.weak_scaled} we find
$$ \frac{d\rho_\varepsilon}{dt}=\int_\R r(v)f_\varepsilon(v,t)\,dv-\rho_\varepsilon^2. $$
\begin{enumerate}[label=(\roman*)]
\item To prove the non-negativity of $\rho_\varepsilon$, we multiply this equation by $-2\rho_\varepsilon^-$ to find
$$ \frac{d}{dt}(\rho_\varepsilon^-)^2=-2\rho_\varepsilon^-\int_\R r(v)f_\varepsilon(v,t)\,dv+2(\rho_\varepsilon^-)^3 
    \leq 2(\rho_\varepsilon^-)^3, $$
where we took advantage of the non-negativity of $f_\varepsilon$. Letting $u:=(\rho_\varepsilon^-)^2$, this is equivalent to $\frac{du}{dt}\leq 2u^{3/2}$, which, since $u(0)=(\rho_\varepsilon^-)^2(0)=0$ and $\frac{3}{2}>1$, implies $u(t)\leq 0$ for all $t>0$. Thus, $(\rho_\varepsilon^-)^2(t)=0$ for every $t>0$ and the thesis follows.

\item To prove the boundedness from above of $\rho_\varepsilon$ we observe that
$$ \frac{d\rho_\varepsilon}{dt}\leq\rho_\varepsilon-\rho_\varepsilon^2, $$
due again to the non-negativity of $f_\varepsilon$. Integrating this differential inequality gives
$$ \rho_\varepsilon(t)\leq\frac{\rho^0}{(1-\rho^0)e^{-t}+\rho^0}, $$
from which the thesis follows as $1-\rho^0\geq 0$ by assumption. \qedhere
\end{enumerate}
\end{proof}

% \begin{remark}
% In view of the normalisation~\eqref{eq:globalm}, we should expect formally $\rho_\varepsilon(t)\leq 1$ for all $t\geq 0$. Proposition~\ref{prop:bound.rho_eps} links precisely the upper bound of $\rho_\varepsilon$ to the proliferation/death rate, indicating that, for consistency with the derivation of the model, it is necessary to choose $r$ in such a way that $\bar{r}\leq 1$. In particular, if $\bar{r}<1$ then the active compartment never saturates to its maximum (unitary) capacity. \bf{[T: questo \`e automaticamente garantito dalla definizione~\eqref{def:r}]}
% \end{remark}

Performing now the change of variable $z:=\frac{v-w}{\varepsilon}$ in the $v$-integral containing $\cM$ on the right-hand side of~\eqref{eq:IDE.weak_scaled} gives
\begin{align}
    \resizebox{.93\textwidth}{!}{$\displaystyle
    \begin{aligned}[b]
        \frac{d}{dt}\int_\R\varphi(v)f_\varepsilon(v,t)\,dv &= \int_\R\varphi(v)\bigl(r(v)-\rho_\varepsilon\bigr)f_\varepsilon(v,t)\,dv \\
        &\phantom{=} +\mu\left\{\int_\R\left[\int_\R\varphi(w+\varepsilon z)\cM(z;\,\alpha\varepsilon,\beta)\,dz\right]f_\varepsilon(w,t)\,dw-
            \int_\R\varphi(v)f_\varepsilon(v,t)\,dv\right\}.
    \end{aligned}
    $}
    \label{eq:IDE.weak_change-of-variable}
\end{align}
For a sufficiently smooth test function, say $\varphi\in C^{2,1}(\R)$ with bounded second derivative, since $\varepsilon$ is small we can Taylor-expand $\varphi(w+\varepsilon z)$ up to the second order with centre in $w$ and Lagrange remainder:
$$ \varphi(w+\varepsilon z)=\varphi(w)+\varphi'(w)\varepsilon z+\frac{1}{2}\varphi''(w)\varepsilon^2z^2+
    \frac{1}{2}\bigl(\varphi''(\tilde{v})-\varphi''(w)\bigr)\varepsilon^2z^2, $$
where $\tilde{v}=(1-\theta)w+\theta(w+\varepsilon z)=w+\theta\varepsilon z$ for some $\theta\in [0,\,1]$. Substituting into~\eqref{eq:IDE.weak_change-of-variable} and recalling~\ref{ass:M.mass},~\ref{ass:M.mean} and~\eqref{eq:M.energy} yields
\begin{align}
    \begin{aligned}[b]
        \frac{d}{dt}\int_\R\varphi(v)f_\varepsilon(v,t)\,dv &= \int_\R\varphi(v)\bigl(r(v)-\rho_\varepsilon\bigr)f_\varepsilon(v,t)\,dv \\
        &\phantom{=} +\mu\varepsilon^2\left(\alpha\int_\R\varphi'(w)f_\varepsilon(w,t)\,dw
            +\frac{1}{2}(\beta+\alpha^2\varepsilon^2)\int_\R\varphi''(w)f_\varepsilon(w,t)\,dw\right. \\
        &\phantom{=} \left.+\frac{1}{2}\int_\R\int_\R\bigl(\varphi''(\tilde{v})-
            \varphi''(w)\bigr)z^2\cM(z;\,\alpha\varepsilon,\beta)f_\varepsilon(w,t)\,dz\,dw\right),
    \end{aligned}
    \label{eq:IDE.weak_with-remainder}
\end{align}
which suggests to fix
\begin{equation}
    \mu=\frac{1}{\varepsilon^2},
    \label{eq:mu.scaling}
\end{equation}
so as to observe the effect of phenotype changes despite the fact that they are small. Notice that this is in line with the idea that, in the quasi-invariant regime, phenotype changes are frequent. With this scaling of $\mu$, we rewrite~\eqref{eq:IDE.weak_with-remainder} as
\begin{align}
    \begin{aligned}[b]
        \frac{d}{dt}\int_\R\varphi(v)f_\varepsilon(v,t)\,dv &= \int_\R\varphi(v)\bigl(r(v)-\rho_\varepsilon\bigr)f_\varepsilon(v,t)\,dv \\
        &\phantom{=} +\alpha\int_\R\varphi'(v)f_\varepsilon(v,t)\,dv
            +\frac{\beta}{2}\int_\R\varphi''(v)f_\varepsilon(v,t)\,dv+R_\varepsilon(f_\varepsilon,\varphi),
    \end{aligned}
    \label{eq:IDE.weak_Reps}
\end{align}
where we have set
$$ R_\varepsilon(f_\varepsilon,\varphi)(t):=\alpha^2\varepsilon^2\int_\R\varphi''(v)f_\varepsilon(v,t)\,dv
    +\frac{1}{2}\int_\R\int_\R\bigl(\varphi''(\tilde{v})-\varphi''(w)\bigr)z^2\cM(z;\,\alpha\varepsilon,\beta)f_\varepsilon(w,t)\,dz\,dw. $$
Owing to Proposition~\ref{prop:bound.rho_eps} and to the assumed smoothness of $\varphi$, we observe that
\begin{align*}
    \abs{R_\varepsilon(f_\varepsilon,\varphi)(t)} &\leq \alpha^2\varepsilon^2\norm{\varphi''}{\infty}
        +\frac{\beta}{2}\operatorname{Lip}{(\varphi'')}\int_\R\int_\R\abs{\tilde{v}-w}z^2\cM(z;\,\alpha\varepsilon,\beta)f_\varepsilon(w,t)\,dz\,dw \\
    &\leq \alpha^2\varepsilon^2\norm{\varphi''}{\infty}
        +\frac{\beta\varepsilon}{2}\operatorname{Lip}{(\varphi'')}\int_\R\abs{z}^3\cM(z;\,\alpha\varepsilon,\beta)\,dz,
\end{align*}
where $\operatorname{Lip}{(\varphi'')}>0$ is the Lipschitz constant of $\varphi''$. In particular, we have used $\abs{\tilde{v}-w}=\theta\varepsilon\abs{z}\leq\varepsilon\abs{z}$ because $0\leq\theta\leq 1$.

To estimate the last integral term on the right-hand side, let $Z_\varepsilon$ be a random variable with law $\cM(z;\,\alpha\varepsilon,\beta)$, so that
$$ \int_\R\abs{z}^3\cM(z;\,\alpha\varepsilon,\beta)\,dz=\ave{\abs{Z_\varepsilon}^3}, $$
where, like in Section~\ref{sect:mesoscopic}, $\ave{\cdot}$ denotes expectation. Then $\ave{Z_\varepsilon}=\alpha\varepsilon$, $\operatorname{Var}{(Z_\varepsilon)}=\beta$ and $Z_\varepsilon$ can be standardised as $Z_\varepsilon=\sqrt{\beta}\tilde{Z}+\alpha\varepsilon$, where $\tilde{Z}$ is a random variable with $\ave{\tilde{Z}}=0$, $\ave{\tilde{Z}^2}=1$, and $\ave{\abs{\tilde{Z}}^3}<+\infty$ owing to~\ref{ass:M.3rd_mom}. Then,
\begin{align}
    \begin{aligned}[b]
        \ave{\abs{Z_\varepsilon}^3}=\ave{\abs{\sqrt{\beta}\tilde{Z}+\alpha\varepsilon}^3} &\leq \beta^{3/2}\ave{\abs{\tilde{Z}}^3}
            +3\abs{\alpha}\beta\varepsilon+3\alpha^2\sqrt{\beta}\varepsilon^2\ave{\abs{\tilde{Z}}}+\abs{\alpha}^3\varepsilon^3 \\
        &\leq \beta^{3/2}\ave{\abs{\tilde{Z}}^3}+1
    \end{aligned}
    \label{eq:aveZ3}
\end{align}
for $\varepsilon$ small enough, since $\ave{\abs{\tilde{Z}}}\leq\ave{\tilde{Z}^2}^{1/2}=1$ by Jensen's inequality.

On the whole, we bound $\abs{R_\varepsilon(f_\varepsilon,\varphi)(t)}$ in such a way that $\abs{R_\varepsilon(f_\varepsilon,\varphi)(t)}\to 0$ as $\varepsilon\to 0^+$. Consequently, we see that, for $\varepsilon$ small, equation~\eqref{eq:IDE.weak_Reps} solved by $f_\varepsilon$ approaches the following equation solved by a new distribution function $g$
\begin{align}
    \begin{aligned}[b]
        \frac{d}{dt}\int_\R\varphi(v)g(v,t)\,dv &= \int_\R\varphi(v)\bigl(r(v)-\varrho\bigr)g(v,t)\,dv \\
        &\phantom{=} +\alpha\int_\R\varphi'(v)g(v,t)\,dv+\frac{\beta}{2}\int_\R\varphi''(v)g(v,t)\,dv
    \end{aligned}
    \label{eq:PDE.weak}
\end{align}
with $\varrho(t):=\int_\R g(v,t)\,dv$.

Equation~\eqref{eq:PDE.weak} can be fruitfully written in strong form by integrating by parts the last two terms on the right-hand side. This yields
\begin{equation}
    \partial_tg=\bigl(r(v)-\varrho\bigr)g-\alpha\partial_vg+\frac{\beta}{2}\partial^2_vg,
    \label{eq:PDE.strong}
\end{equation}
provided that suitable conditions are imposed for $\abs{v}\to\infty$. Such conditions involve, in general, the test function $\varphi$ but, as we shall see, only test functions in the form of powers of $v$ will be relevant for our next purposes. In more detail, for $\varphi$ such that $\abs{\varphi(v)}\sim\abs{v}^p$ when $\abs{v}\to\infty$ with $0\leq p\leq 2$,~\eqref{eq:PDE.strong} is obtained from~\eqref{eq:PDE.weak} by prescribing the ``boundary'' conditions
\begin{equation}
    g(v,t),\,\partial_vg(v,\,t)\xrightarrow{\abs{v}\to\infty}{0} \quad \forall\,t>0,
        \quad \text{infinitesimals of order} >2.
    \label{eq:bc_g}
\end{equation}

Notice that~\eqref{eq:PDE.strong} is a Fokker--Planck-type equation with non-local reaction term. Also in this case, we shall consider non-negative solutions to~\eqref{eq:PDE.weak},~\eqref{eq:PDE.strong}, which are provided by
\begin{proposition}[Non-negativity of $g$] \label{prop:non-neg.g}
Under~\eqref{ass:r_with_sup} and~\eqref{eq:bc_g}, if $g(v,0)\geq 0$ for a.e. $v\in\R$ then $g(v,t)\geq 0$ for a.e. $v\in\R$ and every $t>0$.
\end{proposition}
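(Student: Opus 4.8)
The plan is to mirror the energy-type argument used in the proof of Proposition~\ref{prop:non-neg.feps}, replacing the role of the non-local mixing kernel with the advection and diffusion terms of the Fokker--Planck operator. I would decompose $g=g^+-g^-$ into its positive and negative parts $g^\pm:=\max\{0,\,\pm g\}\geq 0$, which have essentially disjoint supports, and aim to show that the ``negative energy'' $\int_\R(g^-(v,t))^2\,dv$ stays zero for all time once it vanishes at $t=0$.

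Concretely, I would multiply the strong form~\eqref{eq:PDE.strong} by $-2g^-$. Since $\partial_t(g^-)^2=-2g^-\partial_tg$ and, on the support of $g^-$, one has $g=-g^-$, $\partial_vg=-\partial_vg^-$, $\partial_v^2g=-\partial_v^2g^-$, this produces an evolution equation of the form
$$ \partial_t(g^-)^2=2\bigl(r(v)-\varrho\bigr)(g^-)^2-\alpha\,\partial_v(g^-)^2+\beta\,g^-\partial_v^2g^-. $$
Integrating over $v\in\R$, the advection term $-\alpha\int_\R\partial_v(g^-)^2\,dv$ vanishes because $(g^-)^2\to 0$ at infinity, a consequence of~\eqref{eq:bc_g}; meanwhile the diffusion term, after one integration by parts, becomes $-\beta\int_\R(\partial_vg^-)^2\,dv\leq 0$, again using that $g^-$ and $\partial_vg^-$ vanish at infinity. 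Thus, unlike in Proposition~\ref{prop:non-neg.feps} where a Cauchy--Schwarz estimate was needed to control the mixing integral, here the diffusion contribution has a favourable sign and can simply be discarded.

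I would then bound the reaction term using $r(v)\leq 1$ from~\eqref{ass:r_with_sup} and pull the spatially constant $\varrho(t)$ out of the integral, obtaining
$$ \frac{d}{dt}\int_\R(g^-(v,t))^2\,dv\leq 2\bigl(1-\varrho(t)\bigr)\int_\R(g^-(v,t))^2\,dv. $$
Since $\int_\R(g^-(v,0))^2\,dv=0$ by the hypothesis $g(v,0)\geq 0$ a.e., a Grönwall argument (legitimate provided $\varrho$ is locally integrable in time, which holds for the class of solutions under consideration) forces $\int_\R(g^-(v,t))^2\,dv=0$ for every $t>0$, and therefore $g^-(v,t)=0$ for a.e. $v\in\R$ and every $t>0$, i.e. $g\geq 0$.

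The main obstacle, and the only genuinely new point relative to Proposition~\ref{prop:non-neg.feps}, is the careful treatment of the advection and diffusion terms through the decay conditions~\eqref{eq:bc_g}: one must justify that the boundary terms arising from $\int_\R\partial_v(g^-)^2\,dv$ and from the integration by parts of $\int_\R g^-\partial_v^2g^-\,dv$ vanish, and that the standard manipulation of the truncation $(g^-)^2$ (moving $\partial_t$ and $\partial_v$ through the positive/negative part, which is differentiable a.e.) is admissible for the solutions considered. Everything downstream of the resulting differential inequality is then routine.
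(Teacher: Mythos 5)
Your proposal is correct and follows essentially the same route as the paper's proof: multiply~\eqref{eq:PDE.strong} by $-2g^-$, integrate in $v$, kill the advection and diffusion boundary terms via~\eqref{eq:bc_g}, observe that the diffusion contribution reduces to $-\beta\int_\R(\partial_vg^-)^2\,dv\leq 0$ thanks to the essentially disjoint supports of $g^\pm$, and conclude by Gr\"{o}nwall from $\int_\R(g^-(v,0))^2\,dv=0$. The only cosmetic difference is that you rewrite $\partial_v^2g$ as $-\partial_v^2g^-$ on the support of $g^-$ before integrating by parts, whereas the paper keeps $\partial_v^2g$ and uses the identity $\int_\R\partial_vg\,\partial_vg^-\,dv=-\int_\R(\partial_vg^-)^2\,dv$; both manipulations are at the same level of rigour and yield the same differential inequality.
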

\begin{proof}
We multiply~\eqref{eq:PDE.strong} by $-2g^-$, where $g^-$ denotes the negative part of $g$, to obtain
$$ \partial_t(g^-)^2=2\bigl(r(v)-\varrho\bigr)(g^-)^2-\alpha\partial_v(g^-)^2-\beta g^-\partial^2_vg. $$
Next, we integrate over $v\in\R$ and use integration by parts to find
\begin{align*}
    \frac{d}{dt}\int_\R(g^-(v,t))^2\,dv &= 2\int_\R r(v)(g^-(v,t))^2\,dv-2\varrho\int_\R(g^-(v,t))^2\,dv \\
    &\phantom{=} -\alpha\Bigl((g^-(v,t))^2\Bigr\vert_{v=-\infty}^{v=+\infty} \\
    &\phantom{=} -\beta\left(-\int_\R\partial_vg(v,t)\partial_vg^-(v,t)\,dv+\Bigl(g^-(v,t)\partial_vg(v,t)\Bigr\vert_{v=-\infty}^{v=+\infty}\right).
\end{align*}
Owing to~\eqref{eq:bc_g}, the boundary terms vanish. Moreover, writing $g=g^+-g^-$ and using that $g^+$, $g^-$ have essentially disjoint supports we deduce $\int_\R\partial_vg(v,t)\partial_vg^-(v,t)\,dv=-\int_\R(\partial_vg^-(v,t))^2\,dv\leq 0$, and thus, finally,
$$ \frac{d}{dt}\int_\R(g^-(v,t))^2\,dv\leq 2(1-\varrho)\int_\R(g^-(v,t))^2\,dv, $$
which, since $\int_\R(g^-(v,0))^2\,dv=0$ because $g(v,0)\geq 0$ a.e. by assumption, implies
$$ \int_\R(g^-(v,t))^2\,dv=0, \qquad \forall\,t>0. $$
Therefore, $g^-(v,t)=0$ for a.e. $v\in\R$ and every $t>0$, which concludes the proof.
\end{proof}

\subsection{Convergence of~\texorpdfstring{$\boldsymbol{f_\varepsilon}$}{} to~\texorpdfstring{$\boldsymbol{g}$}{} as~\texorpdfstring{$\boldsymbol{\varepsilon\to 0^+}$}{}}
In this section, we prove the convergence of the solution $f_\varepsilon$ of the IDE model~\eqref{eq:IDE.scaled} to the solution $g$ of the non-local PDE model~\eqref{eq:PDE.strong}-\eqref{eq:bc_g} in the quasi-invariant limit $\varepsilon\to 0^+$. This will provide a mathematical justification of the fact that the non-local PDE model, often used in the literature as the starting point of further investigations, is the exact counterpart of the agent-based model in the quasi-invariant regime.

\subsubsection{\textit{A priori} estimates}
To begin with, we establish some \textit{a priori} estimates which will be useful in the sequel.

\begin{proposition}[Non-negativity and boundedness of $\varrho$] \label{prop:bound.varrho}
Under the same assumptions as in Proposition~\ref{prop:bound.rho_eps}, if $\varrho(0)=\varrho^0\leq 1$ then $0\leq\varrho(t)\leq 1$ for every $t>0$.
\end{proposition}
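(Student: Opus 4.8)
The plan is to mirror verbatim the proof of Proposition~\ref{prop:bound.rho_eps}, since the density $\varrho$ of the limit equation~\eqref{eq:PDE.strong} obeys an ODE of exactly the same structure as the one satisfied by $\rho_\varepsilon$. First I would take the constant test function $\varphi\equiv 1$ in the weak form~\eqref{eq:PDE.weak}. This is admissible as the borderline $p=0$ case of the growth condition underlying~\eqref{eq:bc_g}, and since $\varphi'=\varphi''=0$ both the advection term $\alpha\int_\R\varphi'g$ and the diffusion term $\frac{\beta}{2}\int_\R\varphi''g$ vanish identically, leaving
$$ \frac{d\varrho}{dt}=\int_\R r(v)g(v,t)\,dv-\varrho^2. $$
This is the exact analogue of the equation for $\rho_\varepsilon$, with $g$ in place of $f_\varepsilon$ and $\varrho$ in place of $\rho_\varepsilon$, so the two remaining steps carry over unchanged.

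For the non-negativity of $\varrho$, I would multiply the above ODE by $-2\varrho^-$ and invoke the non-negativity of $g$ established in Proposition~\ref{prop:non-neg.g} to discard the sign-definite reaction contribution, obtaining the differential inequality $\frac{d}{dt}(\varrho^-)^2\leq 2(\varrho^-)^3$. Setting $u:=(\varrho^-)^2$, this reads $\frac{du}{dt}\leq 2u^{3/2}$; since $u(0)=(\varrho^-)^2(0)=0$ under the hypothesis $\varrho^0>0$ and the exponent $\frac{3}{2}$ exceeds $1$, the superlinear comparison forces $u(t)\leq 0$, hence $\varrho^-\equiv 0$ and $\varrho(t)\geq 0$ for all $t>0$.

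For the upper bound, I would use the standing assumption $r(v)\leq 1$ from~\eqref{ass:r_with_sup} together with $g\geq 0$ to estimate $\int_\R r(v)g(v,t)\,dv\leq\int_\R g(v,t)\,dv=\varrho(t)$, which turns the ODE into the logistic differential inequality $\frac{d\varrho}{dt}\leq\varrho-\varrho^2$. Integrating this yields $\varrho(t)\leq\frac{\varrho^0}{(1-\varrho^0)e^{-t}+\varrho^0}$, and the conclusion $\varrho(t)\leq 1$ follows because $1-\varrho^0\geq 0$ by hypothesis.

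There is essentially no substantive obstacle here: the argument is a direct transcription of Proposition~\ref{prop:bound.rho_eps}. The only two points requiring a moment of care are (i) confirming that $\varphi\equiv 1$ is a legitimate choice in~\eqref{eq:PDE.weak} so that no boundary contributions survive -- which is immediate as all derivatives of the test function vanish -- and (ii) citing the non-negativity of $g$ (Proposition~\ref{prop:non-neg.g}) rather than that of $f_\varepsilon$ when absorbing the reaction terms.
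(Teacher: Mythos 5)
Your proposal is correct and follows exactly the paper's own route: take $\varphi\equiv 1$ in~\eqref{eq:PDE.weak} to obtain $\frac{d\varrho}{dt}=\int_\R r(v)g(v,t)\,dv-\varrho^2$, note that this is the same ODE as for $\rho_\varepsilon$ with $f_\varepsilon$ replaced by the non-negative $g$ (Proposition~\ref{prop:non-neg.g}), and then repeat the two-step argument of Proposition~\ref{prop:bound.rho_eps}. No changes needed.
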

\begin{proof}
From~\eqref{eq:PDE.weak}, we obtain the evolution equation of $\varrho$ by taking $\varphi(v)=1$:
$$ \frac{d\varrho}{dt}=\int_\R r(v)g(v,t)\,dv-\varrho^2. $$
The thesis follows then by arguing like in the proof of Proposition~\ref{prop:bound.rho_eps}, since $\varrho$ fulfills the same equation as $\rho_\varepsilon$ with $f_\varepsilon$ replaced by the non-negative distribution function $g$.
\end{proof}

From now on, we shall invariably make the following assumptions on the function $r$ and the initial phenotype distribution $f^0=f^0(v)$:
\begin{align}
    \begin{aligned}[c]
        & r,\,f^0\in L^2(\R)\cap W^{3,\infty}(\R), \\
        & f^0,\,\partial_vf^0\to 0 \quad \text{as} \quad \abs{v}\to\infty, \qquad
            f^0(v)\geq 0 \quad \text{a.e.}
    \end{aligned}
    \label{eq:r,f0}
\end{align}
The assumption on the trend of $f^0$, $\partial_vf^0$ for $\abs{v}\to\infty$ is motivated by the similar request on $g$, cf.~\eqref{eq:bc_g}. More specifically, we prescribe $f^0$ as the common initial condition to both~\eqref{eq:IDE.scaled} for every $\varepsilon>0$ and~\eqref{eq:PDE.strong}:
\begin{equation}
    f_\varepsilon(\cdot,0)=g(\cdot,0)=f^0, \qquad \forall\,\varepsilon>0.
    \label{eq:init_cond}
\end{equation}

Notice that the results of Propositions~\ref{prop:bound.rho_eps},~\ref{prop:bound.varrho} hold using, in particular, $\norm{r}{\infty}$ as an upper bound on $r$, since $\norm{r}{\infty}\geq 1$ due to~\eqref{ass:r_with_sup}.

\begin{remark} \label{rem:constant_r}
The theory we develop in this paper does not encompass the case of constant $r$, cf. the assumption $r\in L^2(\R)$ in~\eqref{eq:r,f0}. The reason is that such a case, though simpler, is structurally quite different from the general case of non-constant $r$. Specifically, it gives rise to self-consistent equations for the densities $\rho_\varepsilon$, $\varrho$ with also $\rho_\varepsilon(t)=\varrho(t)$ for every $\varepsilon,\,t>0$. This impacts on the estimates needed to achieve the quasi-invariant limit in a way which cannot be dealt with simply as a particular instance of the case of non-constant $r$.

While throughout the paper we focus on the biologically more significant case of a net proliferation rate, $r$,  depending on the phenotype of the individuals, in Appendix~\ref{app:const_r} we provide for completeness technical details about the case of constant $r$.
\end{remark}

\begin{proposition}[$L^2$ estimate on $f_\varepsilon$] \label{prop:L2_feps}
Under~\eqref{eq:r,f0}, $f_\varepsilon(\cdot,t)\in L^2(\R)$ for every $\varepsilon>0$ and every $t\in (0,\,T]$, where $T>0$ is arbitrary.
\end{proposition}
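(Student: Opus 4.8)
The plan is to derive a Gr\"onwall-type differential inequality for the energy $E_\varepsilon(t):=\int_\R f_\varepsilon(v,t)^2\,dv$ and conclude that it stays finite on every bounded time interval. First I would record the two facts already at our disposal: by Proposition~\ref{prop:non-neg.feps} the solution is non-negative, $f_\varepsilon\geq 0$, and by Proposition~\ref{prop:bound.rho_eps} its density obeys $0\leq\rho_\varepsilon(t)\leq 1$; both will be used with the sign conventions below. I would then multiply the strong form~\eqref{eq:IDE.scaled} by $2f_\varepsilon$ and integrate over $v\in\R$, obtaining
\begin{align*}
    \frac{d}{dt}E_\varepsilon(t) &= 2\int_\R\bigl(r(v)-\rho_\varepsilon\bigr)f_\varepsilon^2\,dv
        +2\mu\int_\R\int_\R M_\varepsilon(v\vert w)f_\varepsilon(w,t)f_\varepsilon(v,t)\,dw\,dv \\
    &\phantom{=} -2\mu\,E_\varepsilon(t).
\end{align*}
Since $r(v)\leq 1$ by~\eqref{ass:r_with_sup} and $\rho_\varepsilon\geq 0$, the reaction term is bounded above by $2E_\varepsilon(t)$.

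The crux is the phenotype-change contribution, which carries the large prefactor $\mu=1/\varepsilon^2$ (cf.~\eqref{eq:mu.scaling}); a crude bound would blow up as $\varepsilon\to 0^+$. The key observation is that the gain and loss parts of this operator exactly compensate at the level of the $L^2$ norm. Indeed, arguing as in the proof of Proposition~\ref{prop:non-neg.feps} --- applying the Cauchy--Schwarz inequality first against the probability measure $M_\varepsilon(v\vert w)\,dv$ and then against $dw$, and using the double normalisation $\int_\R M_\varepsilon(v\vert w)\,dv=\int_\R M_\varepsilon(v\vert w)\,dw=1$ (the latter because $M_\varepsilon$ depends on $v$ and $w$ only through $v-w$) --- one obtains
$$ \int_\R\int_\R M_\varepsilon(v\vert w)f_\varepsilon(w,t)f_\varepsilon(v,t)\,dw\,dv\leq E_\varepsilon(t). $$
Hence the gain term is at most $2\mu\,E_\varepsilon(t)$ and is absorbed by the loss term $-2\mu\,E_\varepsilon(t)$, so that all $\mu$-dependence cancels.

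Combining the two estimates yields $\frac{d}{dt}E_\varepsilon(t)\leq 2E_\varepsilon(t)$, whence Gr\"onwall's lemma gives $E_\varepsilon(t)\leq e^{2t}E_\varepsilon(0)=e^{2t}\norm{f^0}{2}^2$, which is finite for every $t\in(0,\,T]$ by~\eqref{eq:r,f0} and~\eqref{eq:init_cond}; in particular the bound is independent of $\varepsilon$, which is precisely what the subsequent passage to the limit will require. The main obstacle is exactly the control of the $1/\varepsilon^2$ factor, and it is resolved by the gain/loss cancellation above rather than by any quantitative estimate of the non-local operator. A minor technical point is that multiplying by $f_\varepsilon$ and integrating presupposes $f_\varepsilon(\cdot,t)\in L^2(\R)$; this can be made rigorous by reading the computation as an \textit{a priori} estimate on a suitably regularised problem, or by running the same Gr\"onwall argument on the Duhamel representation of~\eqref{eq:IDE.scaled} and then removing the regularisation.
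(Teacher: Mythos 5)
Your proposal is correct and follows essentially the same route as the paper's own proof: multiplying by $2f_\varepsilon$, absorbing the gain term into the loss term via the double Cauchy--Schwarz argument (first against the probability measure $M_\varepsilon(v\vert w)\,dv$, then against $dw$, using the normalisation of $M_\varepsilon$ in both variables), and closing with Gr\"onwall. The only cosmetic difference is that you bound the reaction term by $2E_\varepsilon(t)$ using $r\leq 1$ and $\rho_\varepsilon\geq 0$, whereas the paper uses $\norm{r}{\infty}$, yielding the constant $e^{\norm{r}{\infty}t}$ in place of your $e^{t}$.
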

\begin{proof}
We multiply~\eqref{eq:IDE.scaled} by $2f_\varepsilon$ and integrate both sides w.r.t. $v\in\R$ to find
\begin{align*}
    \frac{d}{dt}\norm{f_\varepsilon(t)}{L^2}^2 &= 2\int_\R\bigl(r(v)-\rho_\varepsilon\bigr)f_\varepsilon^2(v,t)\,dv \\
    &\phantom{=} +\frac{2}{\varepsilon^2}\int_\R\int_\R M_\varepsilon(v\vert w)f_\varepsilon(w,t)f_\varepsilon(v,t)\,dv\,dw
        -\frac{2}{\varepsilon^2}\norm{f_\varepsilon}{L^2}^2 \\
    &\leq 2\left(\norm{r}{\infty}-\frac{1}{\varepsilon^2}\right)\norm{f_\varepsilon(t)}{L^2}^2
        +\frac{2}{\varepsilon^2}\int_\R\left(\int_\R M_\varepsilon(v\vert w)f_\varepsilon(v,t)\,dv\right)f_\varepsilon(w,t)\,dw.
\intertext{Since $\int_\R M_\varepsilon(v\vert w)f_\varepsilon(v,t)\,dv\leq\left(\int_\R M_\varepsilon(v\vert w)f_\varepsilon^2(v,t)\,dv\right)^{1/2}$ by Cauchy--Schwarz inequality w.r.t. the integration probability measure $M_\varepsilon(v\vert w)dv$, then}
    &\leq 2\left(\norm{r}{\infty}-\frac{1}{\varepsilon^2}\right)\norm{f_\varepsilon(t)}{L^2}^2
        +\frac{2}{\varepsilon^2}\int_\R\left(\int_\R M_\varepsilon(v\vert w)f_\varepsilon^2(v,t)\,dv\right)^{1/2}f_\varepsilon(w,t)\,dw.
\intertext{A further application of Cauchy--Schwarz inequality to the integral in $dw$ produces}
    &\leq 2\left(\norm{r}{\infty}-\frac{1}{\varepsilon^2}\right)\norm{f_\varepsilon(t)}{L^2}^2
        +\frac{2}{\varepsilon^2}\norm{f_\varepsilon(t)}{L^2}\left(\int_\R\int_\R M_\varepsilon(v\vert w)f_\varepsilon^2(v,t)\,dv\,dw\right)^{1/2},
\intertext{and thus, invoking the first formula in~\eqref{eq:moments_Meps},}
    &= 2\norm{r}{\infty}\norm{f_\varepsilon(t)}{L^2}^2.
\end{align*}

Finally,
$$ \norm{f_\varepsilon(t)}{L^2}\leq\norm{f^0}{L^2}e^{\norm{r}{\infty}t}, $$
which shows that $\norm{f_\varepsilon(t)}{L^2}$ is bounded for every $t\in (0,\,T]$ with arbitrary $T>0$, and the thesis then follows.
\end{proof}

\begin{proposition}[$L^\infty$ estimates of $f_\varepsilon$ and its derivatives] \label{prop:Linf_estimates}
Under~\eqref{eq:r,f0}, $f_\varepsilon(\cdot,t)\in W^{3,\infty}(\R)$ for every $\varepsilon>0$ and every $t\in (0,\,T]$, where $T>0$ is arbitrary.
\end{proposition}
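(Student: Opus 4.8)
The plan is to read the scaled IDE~\eqref{eq:IDE.scaled} as a family, indexed by $v\in\R$, of linear ODEs in time whose nonlocal gain term is treated as a forcing, and to propagate $L^\infty$ control from $f_\varepsilon$ to its first three spatial derivatives by induction. The starting point is to rewrite the gain term in convolution form: performing in~\eqref{eq:IDE.scaled} the same change of variable $z=(v-w)/\varepsilon$ used in~\eqref{eq:IDE.weak_change-of-variable} and recalling $\mu=1/\varepsilon^2$ from~\eqref{eq:mu.scaling}, the equation becomes $\partial_tf_\varepsilon=\bigl(r(v)-\rho_\varepsilon-\tfrac{1}{\varepsilon^2}\bigr)f_\varepsilon+\tfrac{1}{\varepsilon^2}K_\varepsilon f_\varepsilon$, where $(K_\varepsilon h)(v,t):=\int_\R\cM(z;\,\alpha\varepsilon,\beta)h(v-\varepsilon z,t)\,dz$. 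Two structural properties of $K_\varepsilon$ will do all the work: since $\cM(\cdot;\,\alpha\varepsilon,\beta)$ is a probability density by~\ref{ass:M.mass}, $K_\varepsilon$ is a contraction on $L^\infty(\R)$, namely $\norm{K_\varepsilon h}{\infty}\leq\norm{h}{\infty}$; and, differentiating under the integral sign, $K_\varepsilon$ commutes with $\partial_v$, i.e.\ $\partial_v(K_\varepsilon h)=K_\varepsilon(\partial_v h)$.

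For the base case I would write the pointwise-in-$v$ Duhamel representation associated with the ODE above, with $\tfrac{1}{\varepsilon^2}K_\varepsilon f_\varepsilon$ as forcing, take $L^\infty$ norms, and use $r(v)-\rho_\varepsilon\leq\norm{r}{\infty}$ (from $\rho_\varepsilon\geq 0$, Proposition~\ref{prop:bound.rho_eps}) together with $\norm{K_\varepsilon f_\varepsilon}{\infty}\leq\norm{f_\varepsilon}{\infty}$. This produces a linear integral inequality for $m(t):=\norm{f_\varepsilon(\cdot,t)}{\infty}$ whose comparison ODE is $\dot m=\norm{r}{\infty}m$: crucially, the factor $e^{-(t-s)/\varepsilon^2}$ coming from the strong loss term and the prefactor $1/\varepsilon^2$ multiplying the contractive gain cancel exactly, leaving an $\varepsilon$-independent growth rate. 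A standard comparison argument then yields $\norm{f_\varepsilon(\cdot,t)}{\infty}\leq\norm{f^0}{\infty}e^{\norm{r}{\infty}t}$, finite on $(0,T]$ since $f^0\in W^{3,\infty}(\R)\subset L^\infty(\R)$ by~\eqref{eq:r,f0}.

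I would then iterate for $k=1,2,3$. Differentiating the equation $k$ times in $v$ and using the commutation property, $\partial_v^kf_\varepsilon$ satisfies the same equation as $f_\varepsilon$ plus a source term $S_k:=\sum_{j=1}^{k}\binom{k}{j}(\partial_v^jr)\,\partial_v^{k-j}f_\varepsilon$, produced by the Leibniz rule applied to $(r-\rho_\varepsilon)f_\varepsilon$ (note $\rho_\varepsilon$ is $v$-independent). Because $r\in W^{3,\infty}(\R)$, the coefficients $\partial_v^jr$ are bounded for $j\leq 3$, and the inductive hypothesis bounds $\partial_v^{k-j}f_\varepsilon$ in $L^\infty$, so $\sup_{[0,T]}\norm{S_k(\cdot,t)}{\infty}=:\Sigma_k<\infty$. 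The very same Duhamel-plus-comparison estimate, now with forcing $\tfrac{1}{\varepsilon^2}K_\varepsilon\partial_v^kf_\varepsilon+S_k$ and comparison ODE $\dot m=\norm{r}{\infty}m+\Sigma_k$ (again with the $1/\varepsilon^2$ terms cancelling), gives $\norm{\partial_v^kf_\varepsilon(\cdot,t)}{\infty}$ bounded on $(0,T]$ in terms of $\norm{\partial_v^kf^0}{\infty}$ and $\Sigma_k$. Running this up to $k=3$ establishes $f_\varepsilon(\cdot,t)\in W^{3,\infty}(\R)$.

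The main obstacle is not the estimate itself but its justification: the argument presupposes that $f_\varepsilon$ is regular enough to differentiate three times in $v$, to interchange $\partial_t$ with $\partial_v^k$, and to differentiate $K_\varepsilon$ under the integral sign. I would therefore either pair these \emph{a priori} bounds with a fixed-point construction of a mild solution in $C([0,T];W^{3,\infty}(\R))$ -- in which the contraction of $K_\varepsilon$ and the boundedness of $r$ and its derivatives again drive the argument -- or run the estimates on a regularised problem and pass to the limit. The differentiation under the integral sign is legitimised by dominated convergence using precisely the $L^\infty$ bounds on the derivatives that the induction produces, so that the regularity and the estimates are obtained together by a bootstrap.
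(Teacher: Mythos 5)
Your proposal is correct and follows essentially the same route as the paper: the integrating factor $e^{t/\varepsilon^2}$ (your Duhamel representation) with the exact cancellation of the $1/\varepsilon^2$ loss against the contractive, derivative-commuting gain operator, followed by successive differentiation in $v$ with Leibniz source terms controlled by $r\in W^{3,\infty}(\R)$ and a Gr\"{o}nwall-type comparison at each order. The only cosmetic difference is that for $k\geq 1$ the paper's growth rate is $2\norm{r}{\infty}$ rather than $\norm{r}{\infty}$ (since $\partial_v^kf_\varepsilon$ is not sign-definite one bounds $\abs{r-\rho_\varepsilon}\leq 2\norm{r}{\infty}$), which does not affect the conclusion; your closing remark on justifying the formal differentiations is a point the paper leaves implicit.
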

\begin{proof}
\begin{enumerate}[label=(\roman*)]
\item We begin by estimating $\norm{f_\varepsilon(t)}{\infty}$. From~\eqref{eq:IDE.scaled} with the scaling~\eqref{eq:M.scaling},~\eqref{eq:mu.scaling} and the change of variable $z:=\frac{v-w}{\varepsilon}$ we have
\begin{equation}
    \partial_tf_\varepsilon+\frac{1}{\varepsilon^2}f_\varepsilon=\bigl(r(v)-\rho_\varepsilon\bigr)f_\varepsilon+
        \frac{1}{\varepsilon^2}\int_\R\cM(z;\,\alpha\varepsilon,\beta)f_\varepsilon(v-\varepsilon z,t)\,dz,
    \label{eq:feps_for_norminf}
\end{equation}
and then, using the non-negativity of $f_\varepsilon$, $\rho_\varepsilon$ and multiplying both sides by $e^{t/\varepsilon^2}$,
$$ \partial_t\left(e^{t/\varepsilon^2}f_\varepsilon\right)\leq\left(\norm{r}{\infty}+\frac{1}{\varepsilon^2}\right)e^{t/\varepsilon^2}\norm{f_\varepsilon(t)}{\infty}. $$
We integrate now both sides on $[0,\,t]$, $t>0$, to find
$$ e^{t/\varepsilon^2}f_\varepsilon(v,t)\leq f^0(v)
    +\left(\norm{r}{\infty}+\frac{1}{\varepsilon^2}\right)\int_0^te^{s/\varepsilon^2}\norm{f_\varepsilon(s)}{\infty}\,ds $$
for every $v\in\R$. Therefore
$$ e^{t/\varepsilon^2}\norm{f_\varepsilon(t)}{\infty}\leq\norm{f^0}{\infty}
    +\left(\norm{r}{\infty}+\frac{1}{\varepsilon^2}\right)\int_0^te^{s/\varepsilon^2}\norm{f_\varepsilon(s)}{\infty}\,ds $$
and, finally, applying Gr\"{o}nwall's inequality to the function $e^{t/\varepsilon^2}\norm{f_\varepsilon(t)}{\infty}$,
$$ \norm{f_\varepsilon(t)}{\infty}\leq\norm{f^0}{\infty}e^{\norm{r}{\infty}t}, $$
which ensures that $\norm{f_\varepsilon(t)}{\infty}$ is bounded for every $t\in (0,\,T]$ with arbitrary $T>0$.
\item We estimate now $\norm{\partial_vf_\varepsilon(t)}{\infty}$. For this, we differentiate~\eqref{eq:feps_for_norminf} w.r.t. $v$ to find
\begin{equation}
    \partial_t(\partial_vf_\varepsilon)+\frac{1}{\varepsilon^2}\partial_vf_\varepsilon=r'(v)f_\varepsilon
        +\bigl(r(v)-\rho_\varepsilon\bigr)\partial_vf_\varepsilon+
            \frac{1}{\varepsilon^2}\int_\R\cM(z;\,\alpha\varepsilon,\beta)\partial_vf_\varepsilon(v-\varepsilon z,t)\,dz,
    \label{eq:dfeps_for_norminf}
\end{equation}
then we multiply both sides by $e^{t/\varepsilon^2}$ and obtain the estimate
$$ \partial_t\left(e^{t/\varepsilon^2}\partial_vf_\varepsilon\right)\leq\norm{r'}{\infty}e^{t/\varepsilon^2}\norm{f_\varepsilon(t)}{\infty}
    +\left(2\norm{r}{\infty}+\frac{1}{\varepsilon^2}\right)e^{t/\varepsilon^2}\norm{\partial_vf_\varepsilon(t)}{\infty}, $$
from which, integrating both sides on $[0,\,t]$, $t>0$, and arguing like in the previous step,
\begin{align*}
    e^{t/\varepsilon^2}\norm{\partial_vf_\varepsilon(t)}{\infty} &\leq \norm{\partial_vf^0}{\infty}
        +\norm{r'}{\infty}\int_0^te^{s/\varepsilon^2}\norm{f_\varepsilon(s)}{\infty}\,ds \\
    &\phantom{\leq} +\left(2\norm{r}{\infty}+\frac{1}{\varepsilon^2}\right)\int_0^te^{s/\varepsilon^2}\norm{\partial_vf_\varepsilon(s)}{\infty}\,ds.
\end{align*}
Finally, applying Lemma~\ref{lemma:Gronw_Tosc} (see Appendix~\ref{appendixB}) to the function $u(t)=e^{t/\varepsilon^2}\norm{\partial_vf_\varepsilon(t)}{\infty}$ with
\begin{equation*}
    a(t)=\norm{\partial_vf^0}{\infty}+\norm{r'}{\infty}\int_0^te^{s/\varepsilon^2}\norm{f_\varepsilon(s)}{\infty}\,ds, \qquad
        b(t)=2\norm{r}{\infty}+\frac{1}{\varepsilon^2},
\end{equation*}
we obtain
\begin{align*}
    \norm{\partial_vf_\varepsilon(t)}{\infty} &\leq \norm{\partial_vf^0}{\infty}e^{2\norm{r}{\infty}t}+
        \norm{r'}{\infty}\int_0^t\norm{f_\varepsilon(s)}{\infty}e^{2\norm{r}{\infty}(t-s)}\,ds \\
    &\leq \left(\norm{\partial_vf^0}{\infty}+\norm{r'}{\infty}\norm{f^0}{\infty}t\right)e^{2\norm{r}{\infty}t},
\end{align*}
where we have used the estimate of $\norm{f_\varepsilon(t)}{\infty}$ established in the previous step. This shows that $\norm{\partial_vf_\varepsilon(t)}{\infty}$ is bounded for every $t\in (0,\,T]$ for an arbitrary $T>0$.
\item We continue by estimating $\norm{\partial^2_vf_\varepsilon(t)}{\infty}$. Differentiating~\eqref{eq:dfeps_for_norminf} w.r.t. $v$ gives
\begin{align}
    \begin{aligned}[b]
        \partial_t(\partial^2_vf_\varepsilon)+\frac{1}{\varepsilon^2}\partial^2_vf_\varepsilon &= r''(v)f_\varepsilon
            +2r'(v)\partial_vf_\varepsilon+\bigl(r(v)-\rho_\varepsilon\bigr)\partial^2_vf_\varepsilon \\
        &\phantom{=} +\frac{1}{\varepsilon^2}\int_\R\cM(z;\,\alpha\varepsilon,\beta)\partial^2_vf_\varepsilon(v-\varepsilon z,t)\,dz,
    \end{aligned}
    \label{eq:ddfeps_for_norminf}
\end{align}
and then, proceeding analogously to the previous steps, we find
\begin{align*}
    \partial_t\left(e^{t/\varepsilon^2}\partial^2_vf_\varepsilon\right) &\leq \norm{r''}{\infty}e^{t/\varepsilon^2}\norm{f_\varepsilon(t)}{\infty}
        +2\norm{r'}{\infty}e^{t/\varepsilon^2}\norm{\partial_vf_\varepsilon(t)}{\infty} \\
    &\phantom{=} +\left(2\norm{r}{\infty}+\frac{1}{\varepsilon^2}\right)e^{t/\varepsilon^2}\norm{\partial^2_vf_\varepsilon(t)}{\infty}
\end{align*}
and further
\begin{align*}
    e^{t/\varepsilon^2}\norm{\partial^2_vf_\varepsilon(t)}{\infty} &\leq \norm{\partial^2_vf^0}{\infty}
        +\int_0^te^{s/\varepsilon^2}\left(\norm{r''}{\infty}\norm{f_\varepsilon(s)}{\infty}
        +2\norm{r'}{\infty}\norm{\partial_vf_\varepsilon(s)}{\infty}\right)ds \\
    &\phantom{=} +\left(2\norm{r}{\infty}+\frac{1}{\varepsilon^2}\right)
        \int_0^te^{s/\varepsilon^2}\norm{\partial^2_vf_\varepsilon(s)}{\infty}\,ds
\end{align*}
for $t>0$. Applying now Lemma~\ref{lemma:Gronw_Tosc} (see Appendix~\ref{appendixB}) to the function $u(t)=e^{t/\varepsilon^2}\norm{\partial^2_vf_\varepsilon(t)}{\infty}$ with
\begin{align*}
    a(t) &= \norm{\partial^2_vf^0}{\infty}+\int_0^te^{s/\varepsilon^2}\left(\norm{r''}{\infty}\norm{f_\varepsilon(s)}{\infty}
        +2\norm{r'}{\infty}\norm{\partial_vf_\varepsilon(s)}{\infty}\right)ds, \\
    b(t) &= 2\norm{r}{\infty}+\frac{1}{\varepsilon^2}
\end{align*}
yields
\begin{align*}
    \norm{\partial^2_vf_\varepsilon(t)}{\infty} &\leq \norm{\partial^2_vf^0}{\infty}e^{2\norm{r}{\infty}t} \\
    &\phantom{=} +\int_0^t\left(\norm{r''}{\infty}\norm{f_\varepsilon(s)}{\infty}
        +2\norm{r'}{\infty}\norm{\partial_vf_\varepsilon(s)}{\infty}\right)e^{2\norm{r}{\infty}(t-s)}\,ds,
\end{align*}
and thus the boundedness of $\norm{\partial^2_vf_\varepsilon(t)}{\infty}$ for every $t\in (0,\,T]$ and arbitrary $T>0$ follows owing to the boundedness of $\norm{f_\varepsilon(t)}{\infty}$, $\norm{\partial_vf_\varepsilon(t)}{\infty}$ established in the previous steps.
\item We conclude by estimating $\norm{\partial^3_vf_\varepsilon(t)}{\infty}$. For this, we differentiate~\eqref{eq:ddfeps_for_norminf} w.r.t. $v$ obtaining:
\begin{align*}
    \partial_t(\partial^3_vf_\varepsilon)+\frac{1}{\varepsilon^2}\partial^3_vf_\varepsilon &=
        r'''(v)f_\varepsilon+3r''(v)\partial_vf_\varepsilon+3r'(v)\partial^2_vf_\varepsilon+\bigl(r(v)-\rho_\varepsilon\bigr)\partial^3_vf_\varepsilon \\
    &\phantom{=} +\frac{1}{\varepsilon^2}\int_\R\cM(z;\,\alpha\varepsilon,\beta)\partial^3_vf_\varepsilon(v-\varepsilon z,t)\,dz \\
    &\leq \norm{r'''}{\infty}\norm{f_\varepsilon(t)}{\infty}+3\norm{r''}{\infty}\norm{\partial_vf_\varepsilon(t)}{\infty}
        +3\norm{r'}{\infty}\norm{\partial^2_vf_\varepsilon(t)}{\infty} \\
    &\phantom{\leq} +\left(2\norm{r}{\infty}+\frac{1}{\varepsilon^2}\right)\norm{\partial^3_vf_\varepsilon(t)}{\infty},
\end{align*}
and thus, multiplying both sides by $e^{t/\varepsilon^2}$ and integrating on $[0,\,t]$, $t>0$, we find
\begin{align*}
    \resizebox{.94\textwidth}{!}{$\displaystyle
    \begin{aligned}
        e^{t/\varepsilon^2}\norm{\partial^3_vf_\varepsilon(t)}{\infty} &\leq \norm{\partial^3_vf^0}{\infty} \\
        &\phantom{\leq} +\int_0^te^{s/\varepsilon^2}(\norm{r'''}{\infty}\norm{f_\varepsilon(s)}{\infty}+3\norm{r''}{\infty}\norm{\partial_vf_\varepsilon(s)}{\infty}
            +3\norm{r'}{\infty}\norm{\partial^2_vf_\varepsilon(s)}{\infty})\,ds \\
        &\phantom{\leq} +\left(2\norm{r}{\infty}+\frac{1}{\varepsilon^2}\right)\int_0^te^{s/\varepsilon^2}\norm{\partial^3_vf_\varepsilon(s)}{\infty}\,ds.
    \end{aligned}
    $}
\end{align*}
We now apply Lemma~\ref{lemma:Gronw_Tosc} (see Appendix~\ref{appendixB}) to the function $u(t)=e^{t/\varepsilon^2}\norm{\partial^3_vf_\varepsilon(t)}{\infty}$ with
\begin{align*}
    a(t) &= \norm{\partial^3_vf^0}{\infty}
        +\int_0^te^{s/\varepsilon^2}(\norm{r'''}{\infty}\norm{f_\varepsilon(s)}{\infty}+3\norm{r''}{\infty}\norm{\partial_vf_\varepsilon(s)}{\infty}
            +3\norm{r'}{\infty}\norm{\partial^2_vf_\varepsilon(s)}{\infty})\,ds \\
    b(t) &= 2\norm{r}{\infty}+\frac{1}{\varepsilon^2}
\end{align*}
to find
\begin{align*}
    \norm{\partial^3_vf_\varepsilon(t)}{\infty} &\leq \norm{\partial^3_vf^0}{\infty}e^{2\norm{r}{\infty}t} \\
    &\phantom{\leq} +\int_0^t(\norm{r'''}{\infty}\norm{f_\varepsilon(s)}{\infty}+3\norm{r''}{\infty}\norm{\partial_vf_\varepsilon(s)}{\infty} \\
    &\phantom{\leq+\int_0^t}
        +3\norm{r'}{\infty}\norm{\partial^2_vf_\varepsilon(s)}{\infty})e^{2\norm{r}{\infty}(t-s)}\,ds.
\end{align*}
The boundedness of $\norm{\partial^3_vf_\varepsilon(t)}{\infty}$ for every $t\in (0,\,T]$ and arbitrary $T>0$ follows then from the analogous property of $\norm{f_\varepsilon(t)}{\infty}$, $\norm{\partial_vf_\varepsilon(t)}{\infty}$ and $\norm{\partial^2_vf_\varepsilon(t)}{\infty}$ obtained in the previous steps. \qedhere
\end{enumerate}
\end{proof}

\begin{remark}
As a by-product of Proposition~\ref{prop:Linf_estimates}, we notice that $\norm{f_\varepsilon(t)}{\infty}$, $\norm{\partial_vf_\varepsilon(t)}{\infty}$, $\norm{\partial^2_vf_\varepsilon(t)}{\infty}$, and $\norm{\partial^3_vf_\varepsilon(t)}{\infty}$ are all bounded \textit{uniformly} in $\varepsilon$ on every bounded interval $(0,\,T]$, $T>0$.
\end{remark}

Unlike~\eqref{eq:PDE.strong}, the IDE~\eqref{eq:IDE.scaled} does not require the prescription of boundary conditions. Nevertheless, soon a clue to the trend of $f_\varepsilon(\cdot,t)$, $\partial_vf_\varepsilon(\cdot,t)$ for $\abs{v}\to\infty$ will be needed, which is provided by
\begin{proposition} \label{prop:bc_feps}
Under~\eqref{eq:r,f0}, $f_\varepsilon(v,t),\,\partial_vf_\varepsilon(v,t)\to 0$ for $\abs{v}\to\infty$ for every $t>0$ and every $\varepsilon>0$.
\end{proposition}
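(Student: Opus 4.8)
The plan is to work directly with the mild (Duhamel) form of~\eqref{eq:feps_for_norminf} and show that the qualitative property of \emph{vanishing at infinity} is propagated by the evolution, rather than passing through a Sobolev embedding. The latter route is in fact not available here: the hypotheses~\eqref{eq:r,f0} guarantee $f^0\in L^2(\R)$ but \emph{not} $\partial_vf^0\in L^2(\R)$ (an $L^2\cap W^{3,\infty}$ profile built from narrow bumps of decaying height can have $\partial_vf^0\notin L^2$), so one cannot simply establish $f_\varepsilon(\cdot,t)\in H^2(\R)$ and invoke $H^2(\R)\hookrightarrow C_0(\R)$.

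First I would put~\eqref{eq:feps_for_norminf} in integrating-factor form. Setting $\Phi(v,t):=\int_0^t\bigl(r(v)-\rho_\varepsilon(\tau)\bigr)\,d\tau$ and writing $A_\varepsilon[u](v):=\int_\R\cM(z;\,\alpha\varepsilon,\beta)u(v-\varepsilon z)\,dz$ for the mass-preserving averaging operator, multiplying~\eqref{eq:feps_for_norminf} by $e^{t/\varepsilon^2-\Phi(v,t)}$ and integrating in time gives the linear Volterra equation
\[
 f_\varepsilon(v,t)=e^{-t/\varepsilon^2+\Phi(v,t)}f^0(v)+\frac{1}{\varepsilon^2}\int_0^t e^{-(t-s)/\varepsilon^2+\Phi(v,t)-\Phi(v,s)}A_\varepsilon[f_\varepsilon(\cdot,s)](v)\,ds.
\]
The crux is that both operations on the right-hand side preserve the space $C_0(\R)$ of continuous functions vanishing at infinity. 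Pointwise multiplication by $e^{\Phi(v,t)-\Phi(v,s)}$ does, because by~\eqref{eq:r,f0} and Proposition~\ref{prop:bound.rho_eps} the multiplier is continuous in $v$ and bounded, with $\abs{\Phi(v,t)-\Phi(v,s)}\leq\norm{r}{\infty}(t-s)$; the operator $A_\varepsilon$ does, because it is the convolution of a bounded continuous function with the probability density $\cM(\cdot;\,\alpha\varepsilon,\beta)$, and splitting the $z$-integral into $\abs{z}\leq R$ (where $v-\varepsilon z\to\infty$) and $\abs{z}>R$ (whose $\cM$-mass is small by the finite moments~\ref{ass:M.var}--\ref{ass:M.3rd_mom}) shows $A_\varepsilon$ maps $C_0(\R)$ into itself.

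I would then solve the Volterra equation by its Neumann (Dyson) series, whose $n$-th term is built from $f^0$ by alternately applying $A_\varepsilon$ and the bounded multipliers. Using that $A_\varepsilon$ does not increase the sup norm, together with the telescoping of the exponential factors, the $n$-th term is bounded in sup norm by $\tfrac{1}{\varepsilon^{2n}}\tfrac{t^n}{n!}\,e^{-t/\varepsilon^2+\norm{r}{\infty}t}\norm{f^0}{\infty}$, so the series converges uniformly in $v$, with sum controlled by $e^{\norm{r}{\infty}t}\norm{f^0}{\infty}$. By Gr\"{o}nwall uniqueness of the mild solution this sum equals $f_\varepsilon(\cdot,t)$. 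Since $f^0\in C_0(\R)$ by~\eqref{eq:r,f0}, every term of the series lies in $C_0(\R)$, and $C_0(\R)$ is closed under uniform convergence, I conclude $f_\varepsilon(v,t)\to0$ as $\abs{v}\to\infty$. For $\partial_vf_\varepsilon$ I would repeat the scheme on~\eqref{eq:dfeps_for_norminf}, which is the same equation for $\partial_vf_\varepsilon$ augmented by the source $r'(v)f_\varepsilon(v,t)$; since $r'\in L^\infty(\R)$ and $f_\varepsilon(\cdot,t)\in C_0(\R)$ was just established (uniformly on $[0,T]$ by Proposition~\ref{prop:Linf_estimates}), this source lies in $C_0(\R)$, and, with $\partial_vf^0\in C_0(\R)$, the inhomogeneous Dyson series again converges uniformly in $C_0(\R)$, yielding $\partial_vf_\varepsilon(v,t)\to0$.

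The main obstacle to anticipate is that there is no contraction to exploit: $A_\varepsilon$ has operator norm exactly $1$ on $L^\infty(\R)$ and the reaction coefficient $r(v)-\rho_\varepsilon$ need not be negative at infinity (we have no lower bound on $\rho_\varepsilon$), so a naive tail/\emph{limsup} estimate closes only to the trivial inequality $\ell\leq\ell$. The resolution is precisely to replace the quantitative contraction by the \emph{qualitative} invariance of $C_0(\R)$ under the two elementary building blocks, propagated through the uniformly convergent series; notably, this needs only the boundedness of $r$ and $r'$ and the vanishing of $f^0,\partial_vf^0$, and no decay rate whatsoever.
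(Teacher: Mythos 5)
Your argument is correct, and it takes a genuinely different route from the paper. The paper's proof stays at the level of the differential equation: it sets $u_\varepsilon(t):=\lim_{v\to+\infty}f_\varepsilon(v,t)$ (and $w_\varepsilon(t):=\lim_{v\to+\infty}\abs{\partial_vf_\varepsilon(v,t)}$), passes the limit $v\to+\infty$ through~\eqref{eq:feps_for_norminf} and~\eqref{eq:dfeps_for_norminf} -- using dominated convergence for the integral term, exactly as you do for $A_\varepsilon$ -- obtains the scalar differential inequalities $u_\varepsilon'\leq 2\norm{r}{\infty}u_\varepsilon$ and $w_\varepsilon'\leq 2(\norm{r}{\infty}+\varepsilon^{-2})w_\varepsilon$ with zero initial data, and concludes by Gr\"{o}nwall together with the non-negativity of $f_\varepsilon$. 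That argument is shorter but tacitly presupposes that the pointwise limits in $v$ exist for each $t$ and that $\partial_t$ commutes with $\lim_{v\to+\infty}$; your Duhamel/Neumann-series construction sidesteps both issues by exhibiting $f_\varepsilon(\cdot,t)$ as a uniform limit of functions in $C_0(\R)$, which is the more robust (if longer) way to make the statement rigorous. Your preliminary observation that the Sobolev route is blocked is also accurate as far as the derivative is concerned: $f^0\in L^2\cap W^{3,\infty}$ does not force $\partial_vf^0\in L^2$. (It is worth noting, though, that for $f_\varepsilon$ itself there is an even shorter alternative: Propositions~\ref{prop:L2_feps} and~\ref{prop:Linf_estimates} give $f_\varepsilon(\cdot,t)\in L^2(\R)$ with bounded derivative, and an $L^2$ function that is uniformly continuous must vanish at infinity; it is only for $\partial_vf_\varepsilon$, where no $L^2$ bound is available, that a propagation argument such as yours or the paper's is really needed.) Two minor points to tighten: the tightness of $\cM(\cdot;\,\alpha\varepsilon,\beta)$ needed to show $A_\varepsilon(C_0)\subseteq C_0$ follows already from~\ref{ass:M.mass}, so the appeal to~\ref{ass:M.var}--\ref{ass:M.3rd_mom} is unnecessary; and in the step for $\partial_vf_\varepsilon$ you should note that $r'$ is not merely in $L^\infty$ but continuous (since $r\in W^{3,\infty}$ makes $r'$ Lipschitz), which is what guarantees the source $r'f_\varepsilon$ lies in $C_0(\R)$.
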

\begin{proof}
\begin{enumerate}[label=(\roman*)]
\item Let $u_\varepsilon(t):=\lim_{v\to +\infty}f_\varepsilon(v,t)$. Passing to the limit $v\to +\infty$ in~\eqref{eq:feps_for_norminf} we obtain
$$ u_\varepsilon'+\frac{1}{\varepsilon^2}u_\varepsilon\leq 2\norm{r}{\infty}u_\varepsilon+\frac{1}{\varepsilon^2}u_\varepsilon, $$
where we observed that, owing to Proposition~\ref{prop:Linf_estimates}, the mapping $z\mapsto\cM(z;\,\alpha\varepsilon,\beta)f_\varepsilon(v-\varepsilon z,t)$ is bounded for every $v\in\R$ by the integrable mapping $z\mapsto\norm{f_\varepsilon(t)}{\infty}\cM(z;\,\alpha\varepsilon,\beta)$, cf.~\ref{ass:M.mass}, whereby we could pass the limit through the integral by dominated convergence. The inequality above is equivalent to
$$ u_\varepsilon'\leq 2\norm{r}{\infty}u_\varepsilon, $$
which, along with $u_\varepsilon(0)=\lim_{v\to +\infty}f^0(v)=0$ by assumption, implies $u_\varepsilon(t)\leq 0$ for all $t>0$. But $u_\varepsilon(t)\geq 0$, because $f_\varepsilon$ is non-negative; thus, ultimately,  $u_\varepsilon(t)=0$ for all $t>0$.

The very same argument allows one to prove also that $\lim_{v\to -\infty}f_\varepsilon(v,t)=0$ for all $t>0$.

\item Now let $w_\varepsilon(t):=\lim_{v\to +\infty}\abs{\partial_vf_\varepsilon(v,t)}\geq 0$. From~\eqref{eq:dfeps_for_norminf} we deduce preliminarily
$$ \partial_t\abs{\partial_vf_\varepsilon}\leq\norm{r'}{\infty}f_\varepsilon+2\norm{r}{\infty}\abs{\partial_vf_\varepsilon}
    +\frac{1}{\varepsilon^2}\left(\int_\R\cM(z;\,\alpha\varepsilon,\beta)\abs{\partial_vf_\varepsilon(v-\varepsilon z,t)}\,dz+\abs{\partial_vf_\varepsilon}\right), $$
and then, taking the limit $v\to +\infty$ and invoking again the dominated convergence,
$$ w_\varepsilon'\leq 2\left(\norm{r}{\infty}+\frac{1}{\varepsilon^2}\right)w_\varepsilon, $$
where we have used that $f_\varepsilon(\cdot,t)\to 0$ for $v\to +\infty$ as proved in the previous step. Since $w_\varepsilon(0)=\lim_{v\to +\infty}\abs{\partial_vf^0(v)}=0$ by assumption, we conclude $w_\varepsilon(t)=0$ for all $t>0$.

The same argument allows one to also that $\lim_{v\to -\infty}\abs{\partial_vf_\varepsilon(v,t)}=0$ for all $t>0$. \qedhere
\end{enumerate}
\end{proof}

We end this section by showing that also the non-local PDE model~\eqref{eq:PDE.strong} preserves the boundedness in time of the $L^2$ norm of its solution:
\begin{proposition}[$L^2$ estimate on $g$] \label{prop:L2_g}
Under~\eqref{eq:r,f0}, $g(\cdot,t)\in L^2(\R)$ for every $t\in (0,\,T]$, where $T>0$ is arbitrary.
\end{proposition}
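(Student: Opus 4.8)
The plan is to reproduce, at the level of the non-local PDE~\eqref{eq:PDE.strong}, the energy argument already used for the IDE in Proposition~\ref{prop:L2_feps}. First I would multiply~\eqref{eq:PDE.strong} by $2g$ and integrate in $v$ over $\R$, which gives
\[
    \frac{d}{dt}\norm{g(t)}{L^2}^2=2\int_\R\bigl(r(v)-\varrho\bigr)g^2(v,t)\,dv
        -2\alpha\int_\R g\,\partial_vg\,dv+\beta\int_\R g\,\partial^2_vg\,dv.
\]
The two switching-free terms on the right are then handled exactly as in the proof of Proposition~\ref{prop:non-neg.g}, using the boundary behaviour~\eqref{eq:bc_g}. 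The advection term is a total derivative, $\int_\R g\,\partial_vg\,dv=\tfrac12\int_\R\partial_v(g^2)\,dv=0$, because $g(v,t)\to 0$ as $\abs{v}\to\infty$. Integrating the diffusion term by parts yields $\int_\R g\,\partial^2_vg\,dv=-\int_\R(\partial_vg)^2\,dv\le 0$, the boundary contribution $\bigl[g\,\partial_vg\bigr]_{v=-\infty}^{v=+\infty}$ vanishing once more by~\eqref{eq:bc_g}.

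Next I would bound the non-local reaction term using the \emph{a priori} information already available. By Proposition~\ref{prop:bound.varrho} we have $\varrho(t)\ge 0$, so $-2\varrho\int_\R g^2\,dv\le 0$, while $r(v)\le\norm{r}{\infty}$ pointwise (recall $\norm{r}{\infty}\ge 1$ by~\eqref{ass:r_with_sup}). Dropping the non-positive diffusion and density contributions, this leaves the differential inequality
\[
    \frac{d}{dt}\norm{g(t)}{L^2}^2\le 2\norm{r}{\infty}\norm{g(t)}{L^2}^2.
\]
Since $g(\cdot,0)=f^0\in L^2(\R)$ by~\eqref{eq:r,f0} and~\eqref{eq:init_cond}, Gr\"{o}nwall's inequality (or direct integration) gives $\norm{g(t)}{L^2}\le\norm{f^0}{L^2}e^{\norm{r}{\infty}t}$, which is finite for every $t\in(0,\,T]$ with $T>0$ arbitrary, and the thesis follows.

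I do not expect any serious obstacle here: the estimate is structurally identical to those in Propositions~\ref{prop:L2_feps} and~\ref{prop:non-neg.g}, and is in fact simpler than the IDE case, since no Cauchy--Schwarz manipulation of the integral kernel is needed. The only point requiring care is the justification that the boundary terms arising from the transport and diffusion operators vanish, which is precisely the role of the conditions~\eqref{eq:bc_g} prescribed on $g$; these guarantee both $g^2\to 0$ and $g\,\partial_vg\to 0$ as $\abs{v}\to\infty$.
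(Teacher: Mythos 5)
Your argument is correct and is essentially identical to the paper's own proof: both multiply~\eqref{eq:PDE.strong} by $2g$, integrate over $\R$, kill the advection and diffusion boundary terms via~\eqref{eq:bc_g}, discard the non-positive contributions $-\beta\int_\R(\partial_vg)^2\,dv$ and $-2\varrho\norm{g(t)}{L^2}^2$, and conclude with Gr\"{o}nwall to obtain $\norm{g(t)}{L^2}\leq\norm{f^0}{L^2}e^{\norm{r}{\infty}t}$. No gaps.
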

\begin{proof}
We multiply~\eqref{eq:PDE.strong} by $2g$ and integrate both sides w.r.t. $v\in\R$ to find:
$$ \frac{d}{dt}\norm{g(t)}{L^2}^2=2\int_\R\bigl(r(v)-\varrho)g^2(v,t)\,dv-2\alpha\int_\R g(v,t)\partial_vg(v,t)\,dv
    +\beta\int_\R g(v,t)\partial^2_vg(v,t)\,dv. $$
Observing that $2g\partial_vg=\partial_vg^2$ and integrating by parts the last term on the right-hand side we further obtain:
\begin{align*}
    \frac{d}{dt}\norm{g(t)}{L^2}^2 &= 2\int_\R\bigl(r(v)-\varrho)g^2(v,t)\,dv-\alpha\Bigl(g^2(v,t)\Bigr\vert_{v=-\infty}^{v=+\infty} \\
    &\phantom{=} -\beta\int_\R\left(\partial_vg(v,t)\right)^2dv+\beta\Bigl(g(v,t)\partial_vg(v,t)\Bigr\vert_{v=-\infty}^{v=+\infty}.
\intertext{Finally, the boundary conditions~\eqref{eq:bc_g} together with the non-negativity of $(\partial_vg)^2$ and $\varrho$ yield}
    &\leq 2\norm{r}{\infty}\norm{g(t)}{L^2}^2;
\end{align*}
therefore,
$$ \norm{g(t)}{L^2}\leq\norm{f^0}{L^2}e^{\norm{r}{\infty}t}, $$
and the thesis follows.
\end{proof}

\subsubsection{\texorpdfstring{$\boldsymbol{L^2}$}{} convergence} \label{sect:L2_convergence}
Propositions~\ref{prop:L2_feps},~\ref{prop:L2_g} indicate that, for $L^2$-integrable initial data, $L^2(\R)$ is a space  the solutions $f_\varepsilon(\cdot,t)$, $g(\cdot,t)$ to~\eqref{eq:IDE.scaled},~\eqref{eq:PDE.strong} belong to for $t$ in bounded intervals of the form $(0,\,T]$, $T>0$ arbitrary. It is therefore reasonable to ascertain the convergence of $f_\varepsilon$ to $g$ in that space as $\varepsilon\to 0^+$.

To this purpose, we preliminarily establish:
\begin{proposition} \label{prop:rhoeps-varrho}
Under~\eqref{eq:r,f0},~\eqref{eq:init_cond}, the following estimate holds:
$$ \abs{(\rho_\varepsilon-\varrho)(t)}\leq\norm{r}{L^2}e^{2\norm{r}{\infty}t}\int_0^t\norm{(f_\varepsilon-g)(s)}{L^2}\,ds,
    \qquad t>0. $$
\end{proposition}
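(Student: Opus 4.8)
The plan is to reduce the claim to a scalar ODE for the difference of densities $\sigma(t):=(\rho_\varepsilon-\varrho)(t)$ and then close it by Grönwall, controlling the forcing term by the $L^2$ distance of the two distribution functions. First I would write down the evolution equations for $\rho_\varepsilon$ and $\varrho$ obtained by taking $\varphi\equiv 1$ in the weak formulations~\eqref{eq:IDE.weak_scaled} and~\eqref{eq:PDE.weak}; as already recorded in the proofs of Propositions~\ref{prop:bound.rho_eps} and~\ref{prop:bound.varrho}, these read
$$ \frac{d\rho_\varepsilon}{dt}=\int_\R r(v)f_\varepsilon(v,t)\,dv-\rho_\varepsilon^2, \qquad \frac{d\varrho}{dt}=\int_\R r(v)g(v,t)\,dv-\varrho^2. $$
Subtracting and factoring the quadratic terms as $\rho_\varepsilon^2-\varrho^2=(\rho_\varepsilon+\varrho)\sigma$, I obtain
$$ \sigma'(t)=\int_\R r(v)(f_\varepsilon-g)(v,t)\,dv-(\rho_\varepsilon+\varrho)(t)\,\sigma(t), $$
while the common initial datum~\eqref{eq:init_cond} gives $\rho_\varepsilon(0)=\int_\R f^0\,dv=\varrho(0)$, hence $\sigma(0)=0$.

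Next I would estimate the forcing term. By the Cauchy--Schwarz inequality, $\abs{\int_\R r(f_\varepsilon-g)\,dv}\leq\norm{r}{L^2}\norm{(f_\varepsilon-g)(t)}{L^2}$, which is legitimate since $r\in L^2(\R)$ by~\eqref{eq:r,f0} and $(f_\varepsilon-g)(\cdot,t)\in L^2(\R)$ by Propositions~\ref{prop:L2_feps} and~\ref{prop:L2_g}. Passing to $\abs{\sigma}$ and using the Dini-derivative inequality $\frac{d}{dt}\abs{\sigma}\leq\abs{\sigma'}$ together with the bounds $0\leq\rho_\varepsilon,\varrho\leq 1$ from Propositions~\ref{prop:bound.rho_eps} and~\ref{prop:bound.varrho}, so that $\rho_\varepsilon+\varrho\leq 2\leq 2\norm{r}{\infty}$ (recall $\norm{r}{\infty}\geq 1$ by~\eqref{ass:r_with_sup}), I would arrive at
$$ \frac{d}{dt}\abs{\sigma(t)}\leq\norm{r}{L^2}\norm{(f_\varepsilon-g)(t)}{L^2}+2\norm{r}{\infty}\abs{\sigma(t)}. $$
Grönwall's inequality applied to $\abs{\sigma}$ with $\sigma(0)=0$ then yields
$$ \abs{\sigma(t)}\leq\int_0^t e^{2\norm{r}{\infty}(t-s)}\norm{r}{L^2}\norm{(f_\varepsilon-g)(s)}{L^2}\,ds\leq\norm{r}{L^2}e^{2\norm{r}{\infty}t}\int_0^t\norm{(f_\varepsilon-g)(s)}{L^2}\,ds, $$
which is exactly the claimed bound.

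I expect no serious obstacle; the only point requiring care is the nonlinear term $\rho_\varepsilon^2-\varrho^2$. Since $\rho_\varepsilon+\varrho\geq 0$, this term in fact has a favourable sign, so one could alternatively discard $-(\rho_\varepsilon+\varrho)\abs{\sigma}\leq 0$ outright and obtain the sharper, exponential-free estimate $\abs{\sigma(t)}\leq\norm{r}{L^2}\int_0^t\norm{(f_\varepsilon-g)(s)}{L^2}\,ds$, from which the stated inequality follows \emph{a fortiori}; I would most likely present the Grönwall version to match the constants used elsewhere. The Cauchy--Schwarz step is where the hypothesis $r\in L^2(\R)$ is genuinely used, consistently with the exclusion of constant $r$ discussed in Remark~\ref{rem:constant_r}.
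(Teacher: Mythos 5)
Your proposal is correct and follows essentially the same route as the paper: subtract the $\varphi\equiv 1$ weak formulations, factor $\rho_\varepsilon^2-\varrho^2=(\rho_\varepsilon+\varrho)(\rho_\varepsilon-\varrho)$, bound the forcing term by Cauchy--Schwarz using $r\in L^2(\R)$, and close with Gr\"{o}nwall from the common initial datum. Your side observation that the quadratic term contributes with a favourable sign, so that the exponential factor could in fact be dropped, is also correct and gives a slightly sharper (though unneeded) estimate than the one stated.
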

\begin{proof}
Subtracting~\eqref{eq:IDE.weak_scaled} and~\eqref{eq:PDE.weak} with $\varphi(v)=1$ gives
$$ \frac{d}{dt}(\rho_\varepsilon-\varrho)=\int_\R r(v)\bigl(f_\varepsilon(v,t)-g(v,t)\bigr)\,dv
    -\bigl(\rho_\varepsilon^2-\varrho^2\bigr). $$
Using the Cauchy--Schwarz inquality in the first term on the right-hand side and writing $\rho_\varepsilon^2-\varrho^2=(\rho_\varepsilon+\varrho)(\rho_\varepsilon-\varrho)$, we estimate
$$ \frac{d}{dt}\abs{\rho_\varepsilon-\varrho}\leq \norm{r}{L^2}\norm{(f_\varepsilon-g)(t)}{L^2}+2\norm{r}{\infty}\abs{\rho_\varepsilon-\varrho}, $$
whence, integrating over $[0,\,t]$, $t>0$, and using that $\rho_\varepsilon(0)=\varrho(0)$ owing to~\eqref{eq:init_cond},
$$ \abs{(\rho_\varepsilon-\varrho)(t)}\leq \norm{r}{L^2}\int_0^t\norm{(f_\varepsilon-g)(s)}{L^2}\,ds
    +2\norm{r}{\infty}\int_0^t\abs{(\rho_\varepsilon-\varrho)(s)}\,ds. $$
Gr\"{o}nwall's inequality yields then the thesis.
\end{proof}

We are now in a position to prove:
\begin{theorem}[Quasi-invariant limit] \label{theo:feps_to_g}
Under~\eqref{eq:r,f0},~\eqref{eq:init_cond}, $f_\varepsilon(\cdot,t)\to g(\cdot,t)$ in $L^2(\R)$ for every $t\in (0,\,T]$ with arbitrary $T>0$ when $\varepsilon\to 0^+$.
\end{theorem}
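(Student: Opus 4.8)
The plan is to run a Grönwall argument on $\norm{(f_\varepsilon-g)(t)}{L^2}^2$, closing the density coupling by means of Proposition~\ref{prop:rhoeps-varrho}. Set $h_\varepsilon:=f_\varepsilon-g$, so that $h_\varepsilon(\cdot,0)=0$ by~\eqref{eq:init_cond}. Subtracting the weak form~\eqref{eq:PDE.weak} from the weak form~\eqref{eq:IDE.weak_Reps} and decomposing the reaction term as $(r-\rho_\varepsilon)f_\varepsilon-(r-\varrho)g=(r-\rho_\varepsilon)h_\varepsilon-(\rho_\varepsilon-\varrho)g$, I get, for every admissible test function $\varphi$,
\[
  \int_\R\varphi\,\partial_th_\varepsilon\,dv=\int_\R\varphi(r-\rho_\varepsilon)h_\varepsilon\,dv-(\rho_\varepsilon-\varrho)\int_\R\varphi g\,dv+\alpha\int_\R\varphi'h_\varepsilon\,dv+\frac{\beta}{2}\int_\R\varphi''h_\varepsilon\,dv+R_\varepsilon(f_\varepsilon,\varphi).
\]
Since $f_\varepsilon,g$ are classical solutions, I evaluate this identity at $\varphi=h_\varepsilon(\cdot,t)$ for each $t$ and use $\int_\R h_\varepsilon\,\partial_th_\varepsilon\,dv=\tfrac12\tfrac{d}{dt}\norm{h_\varepsilon(t)}{L^2}^2$. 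The advection term then becomes $\tfrac{\alpha}{2}\int_\R\partial_v(h_\varepsilon^2)\,dv=0$ and the diffusion term $-\tfrac{\beta}{2}\int_\R(\partial_vh_\varepsilon)^2\,dv\le0$, the boundary contributions vanishing by~\eqref{eq:bc_g} and Proposition~\ref{prop:bc_feps}; all integrals are finite because $f_\varepsilon,g\ge0$ have densities $\rho_\varepsilon,\varrho\le1$ (Propositions~\ref{prop:bound.rho_eps},~\ref{prop:bound.varrho}), so $h_\varepsilon\in L^1(\R)$.

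The surviving terms I bound as follows: the reaction term by $\norm{r}{\infty}\norm{h_\varepsilon(t)}{L^2}^2$ (discarding the nonpositive $-\rho_\varepsilon$ part); the coupling term by $\abs{\rho_\varepsilon-\varrho}\,\norm{g(t)}{L^2}\,\norm{h_\varepsilon(t)}{L^2}$ via Cauchy--Schwarz, with $\norm{g(t)}{L^2}\le\norm{f^0}{L^2}e^{\norm{r}{\infty}t}$ from Proposition~\ref{prop:L2_g}; and the remainder by the estimate on $\abs{R_\varepsilon(f_\varepsilon,\varphi)(t)}$ derived above, taken at $\varphi=h_\varepsilon$, namely $\abs{R_\varepsilon(f_\varepsilon,h_\varepsilon)}\le\alpha^2\varepsilon^2\norm{\partial_v^2h_\varepsilon}{\infty}+\tfrac{\beta\varepsilon}{2}\norm{\partial_v^3h_\varepsilon}{\infty}\ave{\abs{Z_\varepsilon}^3}$, which is $O(\varepsilon)$ by~\eqref{eq:aveZ3}, \emph{provided} $\norm{\partial_v^2h_\varepsilon}{\infty}$ and $\norm{\partial_v^3h_\varepsilon}{\infty}$ stay bounded on $(0,T]$. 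This is the crux: Proposition~\ref{prop:Linf_estimates} gives these bounds for $f_\varepsilon$ uniformly in $\varepsilon$, but one must also control $\norm{\partial_v^2g}{\infty}$ and $\norm{\partial_v^3g}{\infty}$.

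The required $W^{3,\infty}$ regularity of $g$ is not among the stated results, so I would establish it by repeating the differentiate-and-Grönwall scheme of Proposition~\ref{prop:Linf_estimates} directly on the Fokker--Planck equation~\eqref{eq:PDE.strong}: differentiating it once, twice, and thrice in $v$ produces advection-diffusion-reaction equations for $\partial_vg,\partial_v^2g,\partial_v^3g$ with lower-order forcing, whose $L^\infty$ norms are then controlled by Duhamel's formula (the advection-diffusion semigroup being an $L^\infty$-contraction) exactly as for $f_\varepsilon$, yielding $g(\cdot,t)\in W^{3,\infty}(\R)$ with norms bounded on $(0,T]$. Alternatively, since the $W^{3,\infty}$ bounds on $f_\varepsilon$ are uniform in $\varepsilon$, $g$ inherits them by weak-$*$ lower semicontinuity once $f_\varepsilon\to g$ is known in a weaker topology. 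Either route gives a bound $\abs{R_\varepsilon(f_\varepsilon,h_\varepsilon)(t)}\le C\varepsilon$ with $C$ independent of $\varepsilon$ and $t\in(0,T]$.

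Collecting the estimates yields $\tfrac12\tfrac{d}{dt}\norm{h_\varepsilon}{L^2}^2\le\norm{r}{\infty}\norm{h_\varepsilon}{L^2}^2+\abs{\rho_\varepsilon-\varrho}\,\norm{g}{L^2}\,\norm{h_\varepsilon}{L^2}+C\varepsilon$. Integrating on $[0,t]$, using $h_\varepsilon(0)=0$ and inserting Proposition~\ref{prop:rhoeps-varrho}, the coupling contribution is $\le C'\bigl(\int_0^t\norm{h_\varepsilon(s)}{L^2}\,ds\bigr)^2\le C'T\int_0^t\norm{h_\varepsilon(s)}{L^2}^2\,ds$ by Cauchy--Schwarz, so that $\norm{h_\varepsilon(t)}{L^2}^2\le C''\int_0^t\norm{h_\varepsilon(s)}{L^2}^2\,ds+\widetilde{C}\varepsilon$ on $(0,T]$, with constants independent of $\varepsilon$. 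Grönwall's inequality then gives $\norm{h_\varepsilon(t)}{L^2}^2\le\widetilde{C}\varepsilon\,e^{C''T}\to0$ as $\varepsilon\to0^+$, uniformly in $t\in(0,T]$. The main obstacle is precisely the uniform-in-time $W^{3,\infty}$ control of $g$ that makes the remainder $o(1)$; the apparently quadratic density coupling $(\int_0^t\norm{h_\varepsilon}{L^2})^2$ is harmless, being linearised to a standard Grönwall input by Cauchy--Schwarz.
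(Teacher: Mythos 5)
Your overall architecture (an $L^2$ energy/Gr\"{o}nwall argument on $h_\varepsilon=f_\varepsilon-g$, the decomposition $(r-\rho_\varepsilon)f_\varepsilon-(r-\varrho)g=(r-\rho_\varepsilon)h_\varepsilon-(\rho_\varepsilon-\varrho)g$, closing the density coupling with Proposition~\ref{prop:rhoeps-varrho} and Cauchy--Schwarz) matches the paper's proof. The decisive divergence is where the Taylor expansion is performed, and it is exactly what creates the gap you yourself flag. The paper works with the \emph{strong} form of~\eqref{eq:IDE.scaled}: after the change of variable $z=\frac{v-w}{\varepsilon}$ it expands the \emph{solution}, $f_\varepsilon(v-\varepsilon z,t)=f_\varepsilon-\varepsilon z\partial_vf_\varepsilon+\frac{\varepsilon^2z^2}{2}\partial^2_vf_\varepsilon-\frac{\varepsilon^3z^3}{6}\partial^3_vf_\varepsilon(\tilde v,t)$, so that the collision operator becomes $-\alpha\partial_vf_\varepsilon+\frac{\beta}{2}\partial^2_vf_\varepsilon$ plus a remainder $\frac{\alpha^2\varepsilon^2}{2}\partial^2_vf_\varepsilon-\frac{\varepsilon}{6}\int_\R z^3\cM(z;\alpha\varepsilon,\beta)\partial^3_vf_\varepsilon(\tilde v,t)\,dz$ involving \emph{only} derivatives of $f_\varepsilon$. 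These are $\varepsilon$-uniformly bounded by Proposition~\ref{prop:Linf_estimates}, and the only information needed about $g$ is its $L^2$ bound (Proposition~\ref{prop:L2_g}) and the decay~\eqref{eq:bc_g}. By instead starting from~\eqref{eq:IDE.weak_Reps} (where the \emph{test function} is expanded) and then setting $\varphi=h_\varepsilon$, you push the burden onto $\norm{\partial^2_vh_\varepsilon}{\infty}$ and $\operatorname{Lip}(\partial_v^2 h_\varepsilon)$, hence onto $W^{3,\infty}$ bounds for $g$ that are established nowhere in the paper.

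This gap is real but not fatal. Your first repair route (propagating $W^{3,\infty}$ regularity through the linear advection--diffusion--reaction equation~\eqref{eq:PDE.strong}, with $r,f^0\in W^{3,\infty}$ and $\varrho$ bounded, by differentiating and using that the drift-diffusion semigroup is an $L^\infty$-contraction) is sound and would close the argument, at the price of a nontrivial extra proposition. Your second route is circular as stated: inheriting bounds on $g$ ``once $f_\varepsilon\to g$ is known in a weaker topology'' presupposes a convergence result of the very kind the theorem is meant to establish, and no such weaker convergence is proved beforehand. Two smaller remarks: testing the equation with the time-dependent $\varphi=h_\varepsilon(\cdot,t)$ is legitimate only because you are really multiplying the strong equations by $h_\varepsilon$ and integrating (the weak form~\eqref{eq:IDE.weak_Reps} is stated for $t$-independent $\varphi$), which is essentially what the paper does; and the Gr\"{o}nwall coefficient you call $C''$ is genuinely time-dependent (of the form $\norm{r}{\infty}+\norm{r}{L^2}\norm{f^0}{L^2}te^{3\norm{r}{\infty}t}$ after Cauchy--Schwarz), which the paper handles with the generalised inequality of Lemma~\ref{lemma:Gronw_gen}; your shortcut of majorising it by its value at $T$ and applying the classical inequality is acceptable on a bounded interval.
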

\begin{proof}
Subtracting~\eqref{eq:IDE.scaled} -- with the scaling~\eqref{eq:M.scaling},~\eqref{eq:mu.scaling} -- and~\eqref{eq:PDE.strong} we find
\begin{align*}
    \partial_t(f_\varepsilon-g) &= \bigl(r(v)-\rho_\varepsilon\bigr)(f_\varepsilon-g)-(\rho_\varepsilon-\varrho)g \\
    &\phantom{=} +\frac{1}{\varepsilon^2}\left(\int_\R\cM(z;\,\alpha\varepsilon,\beta)
        f_\varepsilon(v-\varepsilon z,t)\,dz-f_\varepsilon\right)+\alpha\partial_vg-\frac{\beta}{2}\partial^2_vg,
\end{align*}
where we have performed the change of variable from $w$ to $z:=\frac{v-w}{\varepsilon}$ in the integral. Expanding
$$ f_\varepsilon(v-\varepsilon z,t)=f_\varepsilon(v,t)-\varepsilon\partial_vf_\varepsilon(v,t)z+\frac{\varepsilon^2}{2}\partial^2_vf_\varepsilon(v,t)z^2
    -\frac{\varepsilon^3}{6}\partial^3_vf_\varepsilon(\tilde{v},t)z^3, $$
where $\tilde{v}:=(1-\theta)v+\theta(v-\varepsilon z)=v-\theta\varepsilon z$ for some $\theta\in [0,\,1]$, and recalling~\ref{ass:M.mass}--\ref{ass:M.3rd_mom} we further find
\begin{align*}
    \partial_t(f_\varepsilon-g) &= \bigl(r(v)-\rho_\varepsilon\bigr)(f_\varepsilon-g)-(\rho_\varepsilon-\varrho)g
        -\alpha\partial_v(f_\varepsilon-g)+\frac{\beta}{2}\partial^2_v(f_\varepsilon-g) \\
    &\phantom{=} +\frac{\alpha^2\varepsilon^2}{2}\partial^2_vf_\varepsilon
        -\frac{\varepsilon}{6}\int_\R z^3\cM(z;\,\alpha\varepsilon,\beta)\partial^3_vf_\varepsilon(\tilde{v},t)\,dz.
\end{align*}
We now multiply both sides by $2(f_\varepsilon-g)$ and integrate w.r.t. $v\in\R$ to obtain
\begin{align*}
    \frac{d}{dt}\norm{(f_\varepsilon-g)(t)}{L^2}^2 &= 2\int_\R(r(v)-\rho_\varepsilon)(f_\varepsilon(v,t)-g(v,t))^2\,dv \\
    &\phantom{=} -2(\rho_\varepsilon-\varrho)\int_\R g(v,t)(f_\varepsilon(v,t)-g(v,t))\,dv \\
    &\phantom{=} -2\alpha\int_\R\partial_v(f_\varepsilon(v,t)-g(v,t))(f_\varepsilon(v,t)-g(v,t))\,dv \\
    &\phantom{=} +\beta\int_\R\partial^2_v(f_\varepsilon(v,t)-g(v,t))(f_\varepsilon(v,t)-g(v,t))\,dv \\
    &\phantom{=} +\alpha^2\varepsilon^2\int_\R\partial^2_vf_\varepsilon(v,t)(f_\varepsilon(v,t)-g(v,t))\,dv \\
    &\phantom{=} -\frac{\varepsilon}{3}\int_\R\int_\R z^3\cM(z;\,\alpha\varepsilon,\beta)\partial^3_vf_\varepsilon(\tilde{v},t)
        (f_\varepsilon(v,t)-g(v,t))\,dz\,dv.
\intertext{Next, we notice that $2(f_\varepsilon-g)\partial_v(f_\varepsilon-g)=\partial_v(f_\varepsilon-g)^2$ and we exploit Cauchy--Schwarz inequality and integration-by-parts to further elaborate this as}
    &\leq 2\norm{r}{\infty}\norm{(f_\varepsilon-g)(t)}{L^2}^2+2\abs{(\rho_\varepsilon-\varrho)(t)}\cdot\norm{g(t)}{L^2}\norm{(f_\varepsilon-g)(t)}{L^2} \\
    &\phantom{\leq} -\alpha\Bigl((f_\varepsilon(v,t)-g(v,t))^2\Bigr\vert_{v=-\infty}^{v=+\infty} \\
    &\phantom{\leq} +\beta\Bigl(\partial_v(f_\varepsilon(v,t)-g(v,t))(f_\varepsilon(v,t)-g(v,t))\Bigr\vert_{v=-\infty}^{v=+\infty} \\
    &\phantom{\leq} \qquad -\beta\int_\R\bigl(\partial_v(f_\varepsilon(v,t)-g(v,t)\bigr)^2dv \\
    &\phantom{\leq} +2\varepsilon\left(\alpha\varepsilon\norm{\partial^2_vf_\varepsilon(t)}{\infty}
        +\frac{1}{3}\norm{\partial^3_vf_\varepsilon(t)}{\infty}\bigl(\beta^{3/2}\ave{\abs{\tilde{Z}^3}}+1\bigr)\right)(\rho_\varepsilon+\varrho)(t),
\intertext{where $\tilde{Z}$ is the random variable introduced in~\eqref{eq:aveZ3}. Applying Proposition~\ref{prop:bc_feps} together with the boundary conditions~\eqref{eq:bc_g} along with Propositions~\ref{prop:bound.rho_eps},~\ref{prop:bound.varrho},~\ref{prop:L2_g},~\ref{prop:rhoeps-varrho}, we arrive at}
    &\leq 2\norm{r}{\infty}\norm{(f_\varepsilon-g)(t)}{L^2}^2 \\
    &\phantom{\leq} +2\norm{r}{L^2}\norm{f^0}{L^2}e^{3\norm{r}{\infty}t}\norm{(f_\varepsilon-g)(t)}{L^2}\int_0^t\norm{(f_\varepsilon-g)(\tau)}{L^2}\,d\tau \\
    &\phantom{\leq} +4\varepsilon\norm{r}{\infty}\left(\alpha\varepsilon\norm{\partial^2_vf_\varepsilon(t)}{\infty}
        +\frac{1}{3}\bigl(\beta^{3/2}\ave{\abs{\tilde{Z}^3}}+1\bigr)\norm{\partial^3_vf_\varepsilon(t)}{\infty}\right).
\end{align*}

Next, we integrate both sides on $[0,\,t]$, $t>0$, and use the fact that $f_\varepsilon(v,0)=g(v,0)=f^0(v)$ to deduce
\begin{align*}
    \norm{(f_\varepsilon-g)(t)}{L^2}^2 &\leq 2\norm{r}{\infty}\int_0^t\norm{(f_\varepsilon-g)(s)}{L^2}^2\,ds \\
    &\phantom{\leq} +2\norm{r}{L^2}\norm{f^0}{L^2}\int_0^te^{3\norm{r}{\infty}s}\norm{(f_\varepsilon-g)(s)}{L^2}
        \int_0^s\norm{(f_\varepsilon-g)(\tau)}{L^2}\,d\tau\,ds \\
    &\phantom{\leq} +4\varepsilon\norm{r}{\infty}\left(\alpha\varepsilon\int_0^t\norm{\partial^2_vf_\varepsilon(s)}{\infty}\,ds
        +\frac{1}{3}\bigl(\beta^{3/2}\ave{\abs{\tilde{Z}^3}}+1\bigr)\int_0^t\norm{\partial^3_vf_\varepsilon(s)}{\infty}\,ds\right) \\
    &\leq 2\norm{r}{\infty}\int_0^t\norm{(f_\varepsilon-g)(s)}{L^2}^2\,ds \\
    &\phantom{\leq} +2\norm{r}{L^2}\norm{f^0}{L^2}e^{3\norm{r}{\infty}t}\left(\int_0^t\norm{(f_\varepsilon-g)(s)}{L^2}\,ds\right)^2 \\
    &\phantom{\leq} +4\varepsilon\norm{r}{\infty}\left(\alpha\varepsilon\int_0^t\norm{\partial^2_vf_\varepsilon(s)}{\infty}\,ds
        +\frac{1}{3}\bigl(\beta^{3/2}\ave{\abs{\tilde{Z}^3}}+1\bigr)\int_0^t\norm{\partial^3_vf_\varepsilon(s)}{\infty}\,ds\right),
\intertext{from which, applying Cauchy--Schwarz inequality to the second term on the right-hand side,}
    &\leq 2\left(\norm{r}{\infty}+\norm{r}{L^2}\norm{f^0}{L^2}te^{3\norm{r}{\infty}t}\right)\int_0^t\norm{(f_\varepsilon-g)(s)}{L^2}^2\,ds \\
    &\phantom{\leq} +4\varepsilon\norm{r}{\infty}\left(\alpha\varepsilon\int_0^t\norm{\partial^2_vf_\varepsilon(s)}{\infty}\,ds
        +\frac{1}{3}\bigl(\beta^{3/2}\ave{\abs{\tilde{Z}^3}}+1\bigr)\int_0^t\norm{\partial^3_vf_\varepsilon(s)}{\infty}\,ds\right).
\intertext{Proposition~\ref{prop:Linf_estimates} implies that $\norm{\partial^2_vf_\varepsilon(t)}{\infty}$ and $\norm{\partial^3_vf_\varepsilon(t)}{\infty}$ are $\varepsilon$-uniformly bounded in $(0,\,T]$ by a constant $C_T>0$, thus}
    &\leq 2\left(\norm{r}{\infty}+\norm{r}{L^2}\norm{f^0}{L^2}te^{3\norm{r}{\infty}t}\right)\int_0^t\norm{(f_\varepsilon-g)(s)}{L^2}^2\,ds \\
    &\phantom{\leq} +4\varepsilon\norm{r}{\infty}C_T\left(\alpha\varepsilon+\frac{1}{3}\bigl(\beta^{3/2}\ave{\abs{\tilde{Z}^3}}+1\bigr)\right)t.
\end{align*}

Finally, Lemma~\ref{lemma:Gronw_gen} (see Appendix~\ref{appendixB}) applied to $u(t)=\norm{(f_\varepsilon-g)(t)}{L^2}^2$ with
$$ a(t)=4\varepsilon\norm{r}{\infty}C_T\left(\alpha\varepsilon+\frac{1}{3}\bigl(\beta^{3/2}\ave{\abs{\tilde{Z}^3}}+1\bigr)\right)t,
    \qquad b(t)=2\left(\norm{r}{\infty}+\norm{r}{L^2}\norm{f^0}{L^2}te^{3\norm{r}{\infty}t}\right), $$
which are non-decreasing non-negative functions in $[0,\,T]$, yields
$$ \norm{(f_\varepsilon-g)(t)}{L^2}^2\leq
    4\varepsilon\norm{r}{\infty}C_T\left(\alpha\varepsilon+\frac{1}{3}\bigl(\beta^{3/2}\ave{\abs{\tilde{Z}^3}}+1\bigr)\right)te^{b(t)t}
        \xrightarrow{\varepsilon\to 0^+} 0,$$
which concludes the proof.
\end{proof}

\subsubsection{Convergence of the statistical moments}
To grasp the big picture of a multi-agent system, one usually refers to aggregate macroscopic quantities represented by the statistical moments of the distribution function. We recall, in particular, the most significant low-order ones:
\begin{itemize}
\item $\rho_\varepsilon(t):=\displaystyle{\int_\R}f_\varepsilon(v,t)\,dv$, i.e. the density of the agents at time $t>0$ (zeroth order moment),
\item $p_\varepsilon(t):=\displaystyle{\int_\R}vf_\varepsilon(v,t)\,dv$, i.e. the phenotypic ``momentum'' at time $t>0$ (first order moment),
\item $E_\varepsilon(t):=\displaystyle{\int_\R}v^2f_\varepsilon(v,t)\,dv$, i.e. the phenotypic ``bulk energy'' at time $t>0$ (second order moment).
\end{itemize}

In this section, we show that in the quasi-invariant limit $\varepsilon\to 0^+$ these quantities converge to the corresponding moments of the limit phenotype distribution function $g$:
$$ \varrho(t):=\int_\R g(v,t)\,dv, \qquad p(t):=\int_\R vg(v,t)\,dv, \qquad E(t):=\int_\R v^2g(v,t)\,dv, $$
thereby establishing that the non-local PDE model~\eqref{eq:PDE.strong} inherits in the limit also the exact main statistical moments of the agent distribution. For this, besides~\eqref{eq:r,f0} we shall need the following additional assumption on the function $r$:
\begin{equation}
    v^2r\in L^2(\R).
    \label{eq:v^2r}
\end{equation}

The first result is a straightforward consequence of the theory developed in Section~\ref{sect:L2_convergence}.
\begin{theorem}[Convergence of the density] \label{theo:conv.density}
Under~\eqref{eq:r,f0},~\eqref{eq:init_cond}, $\rho_\varepsilon(t)\to\varrho(t)$ for every $t\in (0,\,T]$ and arbitrary $T>0$ as $\varepsilon\to 0^+$.
\end{theorem}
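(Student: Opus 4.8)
The plan is to combine the \emph{a priori} estimate of Proposition~\ref{prop:rhoeps-varrho} with the $L^2$ convergence established in Theorem~\ref{theo:feps_to_g}. Proposition~\ref{prop:rhoeps-varrho} already bounds $\abs{(\rho_\varepsilon-\varrho)(t)}$ by the time integral $\int_0^t\norm{(f_\varepsilon-g)(s)}{L^2}\,ds$ times the prefactor $\norm{r}{L^2}e^{2\norm{r}{\infty}t}$, which is independent of $\varepsilon$ and bounded on $[0,\,T]$. Hence the entire task reduces to showing that this time integral vanishes as $\varepsilon\to 0^+$.

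The key observation is that the proof of Theorem~\ref{theo:feps_to_g} delivers more than convergence at each fixed time: the final Gr\"{o}nwall-type inequality there produces an explicit bound of the form $\norm{(f_\varepsilon-g)(t)}{L^2}^2\leq C_T'\,\varepsilon$ valid uniformly for $t\in[0,\,T]$, using that the growth factor $e^{b(t)t}$ and the $\varepsilon$-uniform $W^{3,\infty}$ bounds of Proposition~\ref{prop:Linf_estimates} are all controlled on the compact interval $[0,\,T]$. Consequently $\sup_{s\in[0,T]}\norm{(f_\varepsilon-g)(s)}{L^2}$ is an $O(\sqrt{\varepsilon})$ quantity that tends to $0$. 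I would then estimate $\int_0^t\norm{(f_\varepsilon-g)(s)}{L^2}\,ds\leq T\sup_{s\in[0,T]}\norm{(f_\varepsilon-g)(s)}{L^2}\to 0$, and substitute this into Proposition~\ref{prop:rhoeps-varrho} to conclude that $\abs{(\rho_\varepsilon-\varrho)(t)}\to 0$ for every $t\in(0,\,T]$, which is exactly the thesis.

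The only point that requires care — and the reason the result is not entirely automatic from pointwise-in-$t$ convergence alone — is that the convergence $f_\varepsilon\to g$ must be controlled \emph{uniformly} (or at least in a dominated sense) over $[0,\,t]$, so that the time integral in Proposition~\ref{prop:rhoeps-varrho} can be shown to vanish rather than merely the integrand at each instant. This is precisely what the uniform-in-$[0,\,T]$ rate extracted from the proof of Theorem~\ref{theo:feps_to_g} supplies, and it is what makes the statement a ``straightforward consequence'' of Section~\ref{sect:L2_convergence}. As an alternative to invoking the explicit rate, one could instead argue by dominated convergence, using the $L^2$ bounds of Propositions~\ref{prop:L2_feps} and~\ref{prop:L2_g} to produce an $s$-integrable envelope $\norm{(f_\varepsilon-g)(s)}{L^2}\leq\norm{f_\varepsilon(s)}{L^2}+\norm{g(s)}{L^2}\leq 2\norm{f^0}{L^2}e^{\norm{r}{\infty}T}$ that dominates the integrand while each $\norm{(f_\varepsilon-g)(s)}{L^2}\to 0$ pointwise in $s$ by Theorem~\ref{theo:feps_to_g}.
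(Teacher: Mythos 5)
Your proposal is correct and follows essentially the same route as the paper: the authors likewise combine Proposition~\ref{prop:rhoeps-varrho} with the explicit $O(\varepsilon)$ bound on $\norm{(f_\varepsilon-g)(t)}{L^2}^2$ extracted from the proof of Theorem~\ref{theo:feps_to_g}, concluding $\abs{\rho_\varepsilon(t)-\varrho(t)}\lesssim\sqrt{\varepsilon}$ uniformly on $(0,\,T]$. Your write-up simply makes explicit the uniform-in-time step (and offers a dominated-convergence alternative) that the paper's one-line proof leaves implicit.
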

\begin{proof}
The thesis follows from the fact that Proposition~\ref{prop:rhoeps-varrho} and the proof of Theorem~\ref{theo:feps_to_g} imply jointly\footnote{The symbol $\lesssim$ indicates that there exists a constant $C>0$, independent of $\varepsilon$ and $t$, and the specific value of which is not relevant, such that $\abs{\rho_\varepsilon(t)-\varrho(t)}\leq C\sqrt{\varepsilon}$ for every $t\in (0,\,T]$. From the proof of Theorem~\ref{theo:feps_to_g}, it can be seen that such a constant does depend on $T$.} $\abs{\rho_\varepsilon(t)-\varrho(t)}\lesssim \sqrt{\varepsilon}$ for all $t\in (0,\,T]$.
\end{proof}

We proceed now with the other two moments.
\begin{theorem}[Convergence of the phenotypic momentum and bulk energy] \label{theo:conv.mean_energy}
Under~\eqref{eq:r,f0},~\eqref{eq:init_cond}, and~\eqref{eq:v^2r}, we have $p_\varepsilon(t)\to p(t)$ and $E_\varepsilon(t)\to E(t)$ for every $t\in (0,\,T]$ and arbitrary $T>0$ as $\varepsilon\to 0^+$.
\end{theorem}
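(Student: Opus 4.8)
The plan is to test the weak formulations \eqref{eq:IDE.weak_Reps} and \eqref{eq:PDE.weak} against the polynomial observables $\varphi(v)=v$ and $\varphi(v)=v^2$ -- precisely the admissible powers of $v$ (degrees $1$ and $2$) for which the strong-to-weak passage leading to \eqref{eq:PDE.weak} is valid -- and to close a Gr\"onwall argument for the differences $p_\varepsilon-p$ and $E_\varepsilon-E$. The decisive simplification is that the remainder $R_\varepsilon(f_\varepsilon,\varphi)$ in \eqref{eq:IDE.weak_Reps} is essentially inert for these choices: since $\varphi''\equiv 0$ for $\varphi(v)=v$ one has $R_\varepsilon\equiv 0$, while since $\varphi''\equiv 2$ for $\varphi(v)=v^2$ the Lagrange-remainder double integral vanishes identically (its integrand carries $\varphi''(\tilde v)-\varphi''(w)=0$) and only the contribution $2\alpha^2\varepsilon^2\rho_\varepsilon=O(\varepsilon^2)$ survives. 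Hence no genuinely new error estimate is needed beyond the $L^2$ convergence of Theorem~\ref{theo:feps_to_g} and the density convergence of Theorem~\ref{theo:conv.density}.

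First I would treat the momentum. Choosing $\varphi(v)=v$ in \eqref{eq:IDE.weak_Reps} and \eqref{eq:PDE.weak} and subtracting gives
\begin{equation*}
\frac{d}{dt}(p_\varepsilon-p)=\int_\R vr(v)\bigl(f_\varepsilon-g\bigr)\,dv-\rho_\varepsilon(p_\varepsilon-p)-p(\rho_\varepsilon-\varrho)+\alpha(\rho_\varepsilon-\varrho),
\end{equation*}
where I have written $\rho_\varepsilon p_\varepsilon-\varrho p=\rho_\varepsilon(p_\varepsilon-p)+p(\rho_\varepsilon-\varrho)$ to isolate the Gr\"onwall-friendly term $-\rho_\varepsilon(p_\varepsilon-p)$, bounded in modulus by $\abs{p_\varepsilon-p}$ since $0\le\rho_\varepsilon\le 1$ by Proposition~\ref{prop:bound.rho_eps}. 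The first integral is controlled by $\norm{vr}{L^2}\norm{(f_\varepsilon-g)(t)}{L^2}$, where $vr\in L^2(\R)$ follows by interpolating $r\in L^2(\R)$ in \eqref{eq:r,f0} and $v^2r\in L^2(\R)$ in \eqref{eq:v^2r}, i.e. $\int_\R v^2r^2\,dv\le\norm{r}{L^2}\norm{v^2r}{L^2}$; by Theorem~\ref{theo:feps_to_g} this vanishes as $\varepsilon\to 0^+$. The remaining terms are $\lesssim(\abs{p(t)}+\abs{\alpha})\abs{\rho_\varepsilon-\varrho}\lesssim\sqrt\varepsilon$ by Proposition~\ref{prop:rhoeps-varrho} (equivalently the footnote to Theorem~\ref{theo:conv.density}), once $p(t)$ is known to be bounded on $[0,T]$. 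Since $(p_\varepsilon-p)(0)=0$ by \eqref{eq:init_cond}, Gr\"onwall's inequality then yields $p_\varepsilon(t)\to p(t)$.

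The energy is handled analogously with $\varphi(v)=v^2$, which produces
\begin{equation*}
\frac{d}{dt}(E_\varepsilon-E)=\int_\R v^2r(v)\bigl(f_\varepsilon-g\bigr)\,dv-\rho_\varepsilon(E_\varepsilon-E)-E(\rho_\varepsilon-\varrho)+2\alpha(p_\varepsilon-p)+\beta(\rho_\varepsilon-\varrho)+2\alpha^2\varepsilon^2\rho_\varepsilon.
\end{equation*}
Here the weighted integral is bounded by $\norm{v^2r}{L^2}\norm{(f_\varepsilon-g)(t)}{L^2}$, which is exactly where assumption \eqref{eq:v^2r} becomes indispensable; the new coupling term $2\alpha(p_\varepsilon-p)$ is infinitesimal by the momentum convergence just established (so the energy must be proved \emph{after}, not before, the momentum); and the $O(\varepsilon^2)$ remainder is negligible. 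With $(E_\varepsilon-E)(0)=0$ and $0\le\rho_\varepsilon\le 1$, a second Gr\"onwall argument gives $E_\varepsilon(t)\to E(t)$.

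The main obstacle is not the convergence mechanism itself -- a direct Gr\"onwall consequence of the already-established $L^2$ and density convergences -- but the \emph{a priori} well-posedness and uniform boundedness of the moments. The observables $v$ and $v^2$ are unbounded, so one must first guarantee that $p_\varepsilon$, $E_\varepsilon$, $p$, $E$ are finite and, crucially, that the limit moments $p(t)$, $E(t)$ are bounded on $[0,T]$, since these bounds enter as the coefficients $\abs{p}$, $\abs{E}$ multiplying $\rho_\varepsilon-\varrho$. This requires finite first and second moments of the common initial datum $f^0$ -- a natural standing requirement for the moments to be defined -- after which the bounds propagate: testing the $g$-equation against $v$ and $v^2$ and using $\norm{g(t)}{L^2}\le\norm{f^0}{L^2}e^{\norm{r}{\infty}t}$ (Proposition~\ref{prop:L2_g}), together with $0\le\varrho\le 1$ (Proposition~\ref{prop:bound.varrho}) and the weighted bounds $vr,\,v^2r\in L^2(\R)$, yields by Gr\"onwall that $\abs{p(t)}$ and then $\abs{E(t)}$ are bounded on every $[0,T]$; the identical computation with $f_\varepsilon$ in place of $g$ (via Proposition~\ref{prop:L2_feps}) gives the same, uniformly in $\varepsilon$. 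Once these preliminary bounds are secured, the two Gr\"onwall estimates above complete the proof.
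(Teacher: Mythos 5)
Your proposal is correct and follows essentially the same route as the paper: test the weak formulations against $\varphi(v)=v$ and $\varphi(v)=v^2$, split $\rho_\varepsilon p_\varepsilon-\varrho p=\rho_\varepsilon(p_\varepsilon-p)+p(\rho_\varepsilon-\varrho)$ (and likewise for $E$), establish \emph{a priori} bounds on $p(t)$ and $E(t)$, and close with Gr\"onwall using the $L^2$ convergence of Theorem~\ref{theo:feps_to_g} and the density convergence, treating the energy after the momentum. The only cosmetic differences are that you obtain $vr\in L^2(\R)$ by Cauchy--Schwarz rather than the paper's domain splitting, and you make explicit the (implicitly assumed) finiteness of the initial moments of $f^0$.
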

\begin{proof}
\begin{enumerate}[label=(\roman*)]
\item We begin with the convergence of the phenotypic momentum. We observe preliminarily that from~\eqref{eq:v^2r} it follows also $vr\in L^2(\R)$; in fact
\begin{align*}
    \norm{vr}{L^2}^2=\int_\R v^2r^2(v)\,dv &= \int_{\{\abs{v}\leq 1\}}v^2r^2(v)\,dv+\int_{\{\abs{v}>1\}}v^2r^2(v)\,dv,
\intertext{and, therefore, since $v^2\leq 1$ for $\abs{v}\leq 1$ and $v^2<v^4$ for $\abs{v}>1$,}
    &\leq \int_{\{\abs{v}\leq 1\}}r^2(v)\,dv+\int_{\{\abs{v}>1\}}v^4r^2(v)\,dv
        \leq\norm{r}{L^2}^2+\norm{v^2r}{L^2}^2<+\infty.
\end{align*}

Letting now $\varphi(v)=v$ in~\eqref{eq:IDE.weak_scaled} with~\eqref{eq:mu.scaling} and recalling~\eqref{eq:moments_Meps}, we obtain
$$ \frac{dp_\varepsilon}{dt}=\int_\R vr(v)f_\varepsilon(v,t)\,dv-\rho_\varepsilon p_\varepsilon+\alpha\rho_\varepsilon; $$
likewise, substituting $\varphi(v)=v$ into~\eqref{eq:PDE.weak} we find
\begin{equation}
    \frac{dp}{dt}=\int_\R vr(v)g(v,t)\,dv-\varrho p+\alpha\varrho.
    \label{eq:varrho_barv}
\end{equation}
Subtracting these two equations produces
$$ \frac{d}{dt}(p_\varepsilon-p)=\int_\R vr(v)\bigl(f_\varepsilon(v,t)-g(v,t)\bigr)\,dv
    -\rho_\varepsilon(p_\varepsilon-p)+(\alpha-p)(\rho_\varepsilon-\varrho), $$
and thus
\begin{equation}
    \frac{d}{dt}\abs{p_\varepsilon-p}\leq\norm{vr}{L^2}\norm{(f_\varepsilon-g)(t)}{L^2}
        +\norm{r}{\infty}\abs{p_\varepsilon-p}+(\abs{p}+\abs{\alpha})\abs{\rho_\varepsilon-\varrho}.
    \label{eq:diff.rho_barv}
\end{equation}

Notice that from~\eqref{eq:varrho_barv} it follows that $p$ is bounded on every bounded time interval; in fact
$$ \frac{d}{dt}\abs{p}\leq\norm{vr}{L^2}\norm{g(t)}{L^2}+\norm{r}{\infty}\abs{p}+\abs{\alpha}\cdot\norm{r}{\infty}, $$
and thus, by Gr\"{o}nwall's inequality after bounding $\norm{g(t)}{L^2}$ by means of Proposition~\ref{prop:L2_g}, $\abs{p(t)}$ is bounded by a non-negative non-decreasing function, say $h=h(t)$. Using this in~\eqref{eq:diff.rho_barv} after integrating on $[0,\,t]$, $t>0$, yields
\begin{align*}
    \abs{(p_\varepsilon-p)(t)} &\leq \norm{rv}{L^2}\int_0^t\norm{(f_\varepsilon-g)(s)}{L^2}\,ds
        +\int_0^t(h(s)+\abs{\alpha})\abs{(\rho_\varepsilon-\varrho)(s)}\,ds \\
    &\phantom{\leq} +\norm{r}{\infty}\int_0^t\abs{(p_\varepsilon-p)(s)}\,ds,
\end{align*}
where we have taken into account that $p_\varepsilon(0)=p(0)$ because of~\eqref{eq:init_cond}.

At this stage, Lemma~\ref{lemma:Gronw_Tosc} (see Appendix~\ref{appendixB}) applied to the function $u(t)=\abs{(p_\varepsilon-p)(t)}$ with
$$ a(t)=\norm{rv}{L^2}\int_0^t\norm{(f_\varepsilon-g)(s)}{L^2}\,ds+\int_0^t(h(s)+\abs{\alpha})\abs{(\rho_\varepsilon-\varrho)(s)}\,ds,
    \qquad b(t)=\norm{r}{\infty} $$
produces
$$ \abs{(p_\varepsilon-p)(t)}\leq\int_0^t\bigl(\norm{vr}{L^2}\norm{(f_\varepsilon-g)(s)}{L^2}
    +(h(s)+\abs{\alpha})\abs{(\rho_\varepsilon-\varrho)(s)}\bigr)e^{\norm{r}{\infty}(t-s)}\,ds. $$
The thesis follows then passing the limit $\varepsilon\to 0^+$ through the integral by dominated convergence -- notice that, owing to Propositions~\ref{prop:bound.rho_eps},~\ref{prop:bound.varrho},~\ref{prop:L2_feps},~\ref{prop:L2_g}, the integrand is bounded by the integrable mapping $t\mapsto 2\norm{vr}{L^2}\norm{f^0}{L^2}e^{\norm{r}{\infty}t}+2(\abs{\alpha}+h(s))\norm{r}{\infty}$ -- and invoking Theorems~\ref{theo:feps_to_g},~\ref{theo:conv.density}.

\item As for the convergence of the phenotypic bulk energy, we proceed analogously to the previous step by substituting $\varphi(v)=v^2$ into~\eqref{eq:IDE.weak_scaled} with~\eqref{eq:mu.scaling} and into~\eqref{eq:PDE.weak} to find, respectively,
\begin{align*}
    \frac{dE_\varepsilon}{dt} &= \int_\R v^2r(v)f_\varepsilon(v,t)\,dv-\rho_\varepsilon E_\varepsilon+2\alpha p_\varepsilon
        +(\beta+\alpha^2
        \varepsilon^2)\rho_\varepsilon, \\
    \frac{dE}{dt} &= \int_\R v^2r(v)g(v,t)\,dv-\varrho E+2\alpha p+\beta\varrho.
\end{align*}
Subtracting these equations yields
\begin{align*}
    \frac{d}{dt}\abs{E_\varepsilon-E} &\leq \norm{v^2r}{L^2}\norm{(f_\varepsilon-g)(t)}{L^2} \\
    &\phantom{\leq} +\norm{r}{\infty}\abs{E_\varepsilon-E}+(\abs{E}+\beta)\abs{\rho_\varepsilon-\varrho} \\
    &\phantom{\leq} +2\abs{\alpha}\cdot\abs{p_\varepsilon-p}+\alpha^2\norm{r}{\infty}\varepsilon^2.
\end{align*}
In particular, the equation for $E$ implies that the latter is bounded by a non-negative non-decreasing function, say $k=k(t)$, because
$$ \frac{d}{dt}\abs{E}\leq\norm{v^2r}{L^2}\norm{g(t)}{L^2}+\norm{r}{\infty}\abs{E}+2\abs{\alpha}\cdot\abs{p}+\beta\norm{r}{\infty}, $$
where $\norm{g(t)}{L^2}$ can be bounded by means of Proposition~\ref{prop:L2_g}, while $\abs{p}$ is bounded by the function $h$ introduced in the previous step.

On the whole, we then have 
\begin{align*}
    \abs{(E_\varepsilon-E)(t)} &\leq \norm{v^2r}{L^2}\int_0^t\norm{(f_\varepsilon-g)(s)}{L^2}\,ds
        +\int_0^t(k(s)+\beta)\abs{(\rho_\varepsilon-\varrho)(s)}\,ds \\
    &\phantom{\leq} +2\abs{\alpha}\int_0^t\abs{(p_\varepsilon-p)(s)}\,ds+\alpha^2\norm{r}{\infty}\varepsilon^2t \\
    &\phantom{\leq} +\norm{r}{\infty}\int_0^t\abs{(E_\varepsilon-E)(s)}\,ds,
\end{align*}
which, invoking Lemma~\ref{lemma:Gronw_Tosc} (see Appendix~\ref{appendixB}) for the function $u(t)=\abs{(E_\varepsilon-E)(t)}$ with
\begin{align*}
    a(t) &= \norm{v^2r}{L^2}\int_0^t\norm{(f_\varepsilon-g)(s)}{L^2}\,ds+\int_0^t(k(s)+\beta)\abs{(\rho_\varepsilon-\varrho)(s)}\,ds \\
    &\phantom{=} +2\abs{\alpha}\int_0^t\abs{(p_\varepsilon-p)(s)}\,ds+\alpha^2\norm{r}{\infty}\varepsilon^2t, \\
    b(t) &= \norm{r}{\infty},
\end{align*}
implies
\begin{align*}
    \abs{(E_\varepsilon-E)(t)} &\leq \int_0^t\bigl(\norm{v^2r}{L^2}\norm{(f_\varepsilon-g)(s)}{L^2}
        +(k(s)+\beta)\abs{(\rho_\varepsilon-\varrho)(s)} \\
    &\phantom{\leq\int_0^t} +2\abs{\alpha}\cdot\abs{(p_\varepsilon-p)(s)}+\alpha^2\norm{r}{\infty}\varepsilon^2s\bigr)
        e^{\norm{r}{\infty}(t-s)}\,ds.
\end{align*}
The thesis follows then from Theorems~\ref{theo:feps_to_g},\,\ref{theo:conv.density} along with the result proved in the previous step, passing the limit $\varepsilon\to 0^+$ through the integral by dominated convergence. \qedhere
\end{enumerate}
\end{proof}

\begin{remark}
From the statistical moments above one can compute two other aggregate quantities of interest, such as the mean phenotype:
$$ \bar{v}_\varepsilon:=\frac{p_\varepsilon}{\rho_\varepsilon} $$
and the variance of the phenotype distribution:
$$ \sigma_\varepsilon^2:=\frac{E_\varepsilon}{\rho_\varepsilon}-\bar{v}_\varepsilon^2. $$
In the quasi-invariant regime, the corresponding quantities $\bar{v}$, $\sigma^2$ are defined analogously using $\varrho$, $p$, and $E$.
\end{remark}

\section{Main results of numerical simulations}
\label{sect:num}
\begin{table}[t]
\caption{Parameters used in the numerical simulations of Figures~\ref{fig:test_alpha_neg},~\ref{fig:test_alpha_null},~\ref{fig:test_alpha_pos}}
\label{tab:param}
\begin{center}
\begin{tabular}{|l|l|l|}
    \hline
    Parameter & Description & Value \\
    \hline\hline
    $N$ & Number of agents in MC simulations & $10^5$ \\
    $\Delta{v}$ & Discretisation step of phenotype domain & $2.5\cdot 10^{-2}$ \\
    $\Delta{t}$ & Discretisation step of time domain & $10^{-4}$ \\
    $T$ & Final time of simulations & $10$ \\
    & Phenotype domain & $[-15,\,15]$ \\
    $R$ & Radius of the mollifier $\psi$ & $5$ \\
    $\delta$ & Amplitude of the transition layer of the mollifier $\psi$ & $0.5$ \\
    $v_m$ & Fittest trait (i.e. maximum point of the function $r$) & $1.5$ \\
    $\alpha$ & Drift coefficient & $-0.3$, $0$, $0.3$ \\
    $\beta$ & Diffusion coefficient & $0.4$ \\
    $\varepsilon$ & Scaling parameter for the quasi-invariant regime & $1$, $10^{-1/2}$, $10^{-1}$ \\
    \hline
\end{tabular}
\end{center}
\end{table}

We now illustrate, by means of a sample of results of numerical simulations, the theoretical results obtained in the previous sections. In more detail, we present the results of numerical simulations carried out for:
\begin{itemize}
\item the stochastic agent-based model~\eqref{eq:3rand},  through a direct Monte Carlo method, cf.~\cite{pareschi2013BOOK};
\item the IDE model~\eqref{eq:IDE.scaled}, for three decreasing values of the scaling parameter $\varepsilon$, through an explicit-in-time Euler scheme complemented with a numerical approximation of the integral term based on the trapezium rule;
\item the limit non-local PDE model~\eqref{eq:PDE.strong},  through an explicit-in-time Euler scheme complemented with finite difference schemes for the approximation of the $v$-derivatives.
\end{itemize}
We compare and contrast these results in terms of distribution functions and low-order statistical moments. Specifically, the objective is to verify that there is quantitative agreement between the numerical solutions of the stochastic agent-based and IDE models for all the values of the scaling parameter, $\varepsilon$, thus corroborating the formal derivation presented in Section~\ref{sect:mesoscopic}. Moreover, we aim to show the convergence of the solution of the IDE model, and of its low-order statistical moments, to the solution of the non-local PDE model, and to the corresponding low-order moments, as the scaling parameter gets smaller, in order to provide numerical evidence for the quasi-invariant limit analytically studied in Section~\ref{sect:quasi-invariant}.

The parameter values employed in numerical simulations are summarised in Table~\ref{tab:param}.

As an initial condition, common to all simulations and to equations~\eqref{eq:IDE.scaled},~\eqref{eq:PDE.strong}, cf.~\eqref{eq:init_cond}, we prescribe
$$ f^0(v):=\frac{3}{10\sqrt{\pi}}e^{-v^2}, $$
namely a normal distribution with mass $\rho^0=0.3$, null mean and variance equal to $0.5$, which complies with assumptions~\eqref{eq:r,f0}. This is also the distribution from which the agents are initially sampled for the Monte Carlo method used for the stochastic agent-based model~\eqref{eq:3rand}.

Concerning the net proliferation rate, $r$, we choose
$$ r(v):=\left(1-(v-v_m)^2\right)\psi(v). $$
The parabolic profile $1-(v-v_m)^2$ is maximum at $v=v_m$ with $r(v_m)=1$, cf.~\eqref{ass:r_with_sup}, which represents the so-called \textit{fittest trait}, i.e. the phenotype corresponding to the maximum net proliferation rate. The factor $\psi:\R\to [0,\,1]$ is a $C^\infty$-mollifier, which is needed to adjust the trend of $r$ at infinity, in compliance with assumptions~\eqref{eq:r,f0}, while allowing $r$ to coincide with the desired parabolic profile at finite points. In more detail, $\psi$ is defined by first introducing the functions
$$ \zeta(v):=\frac{1}{2}\left[1+\tanh{\left(\frac{2v}{1-v^2}\right)}\right]\chi_{(-1,\,1)}(v),
    \qquad \hat{\psi}(v):=1-\zeta\left(\frac{2(v-R)}{\delta}-1\right), $$
where $\chi_{(-1,\,1)}$ denotes the characteristic function of the interval $(-1,\,1)$, and then letting
$$  \psi(v):=
    \begin{cases}
        \hat{\psi}(-v) & \text{if } -R-\delta<v<-R \\
        1 & \text{if } -R\leq v\leq R \\
        \hat{\psi}(v) & \text{if } R<v<R+\delta \\
        0 & \text{otherwise},
    \end{cases}
$$
where $\delta,\,R>0$ are real parameters, cf. Table~\ref{tab:param}. Notice that $\psi\equiv 1$, and thus $r(v)=1-(v-v_m)^2$, for $v\in [-R,\,R]$. In contrast, in the two layers $(-R-\delta,\,-R)$ and $(R,\,R+\delta)$ of amplitude $\delta$ the function $\hat{\psi}$ produces a $C^\infty$-transition of $\psi$ from $0$ to $1$ and from $1$ to $0$, respectively.

\begin{figure}[t]
    \centering
    \includegraphics[width=\linewidth]{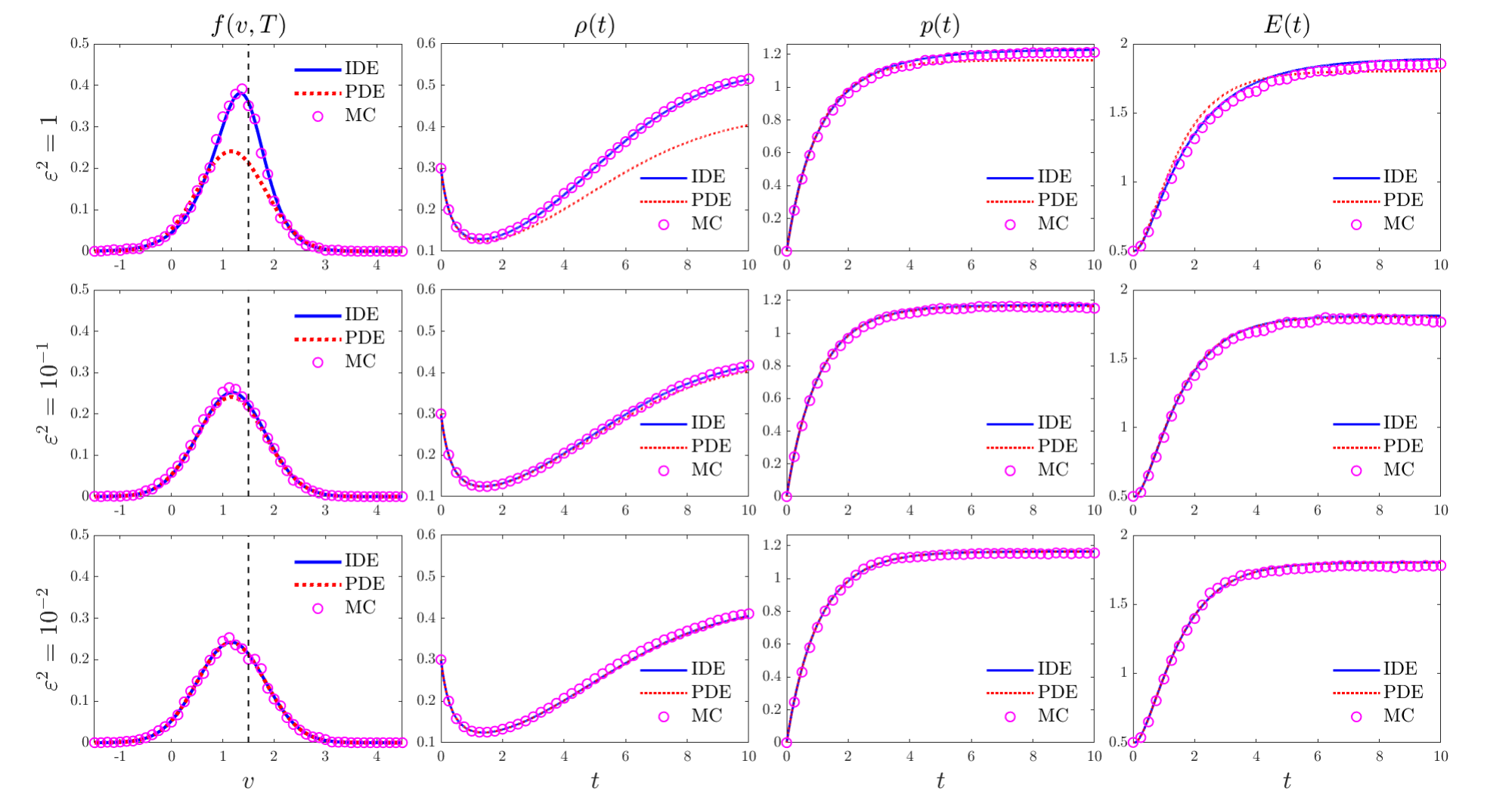}
    \caption{Results of numerical simulations of the stochastic agent-based model~\eqref{eq:3rand} (MC), the IDE model~\eqref{eq:IDE.scaled} (IDE), and the limit non-local PDE model~\eqref{eq:PDE.strong} (PDE) for the drift coefficient $\alpha<0$. The dashed vertical line in the panels of the first column highlights the fittest trait $v_m$}
    \label{fig:test_alpha_neg}
\end{figure}
\begin{figure}[t]
    \centering
    \includegraphics[width=\linewidth]{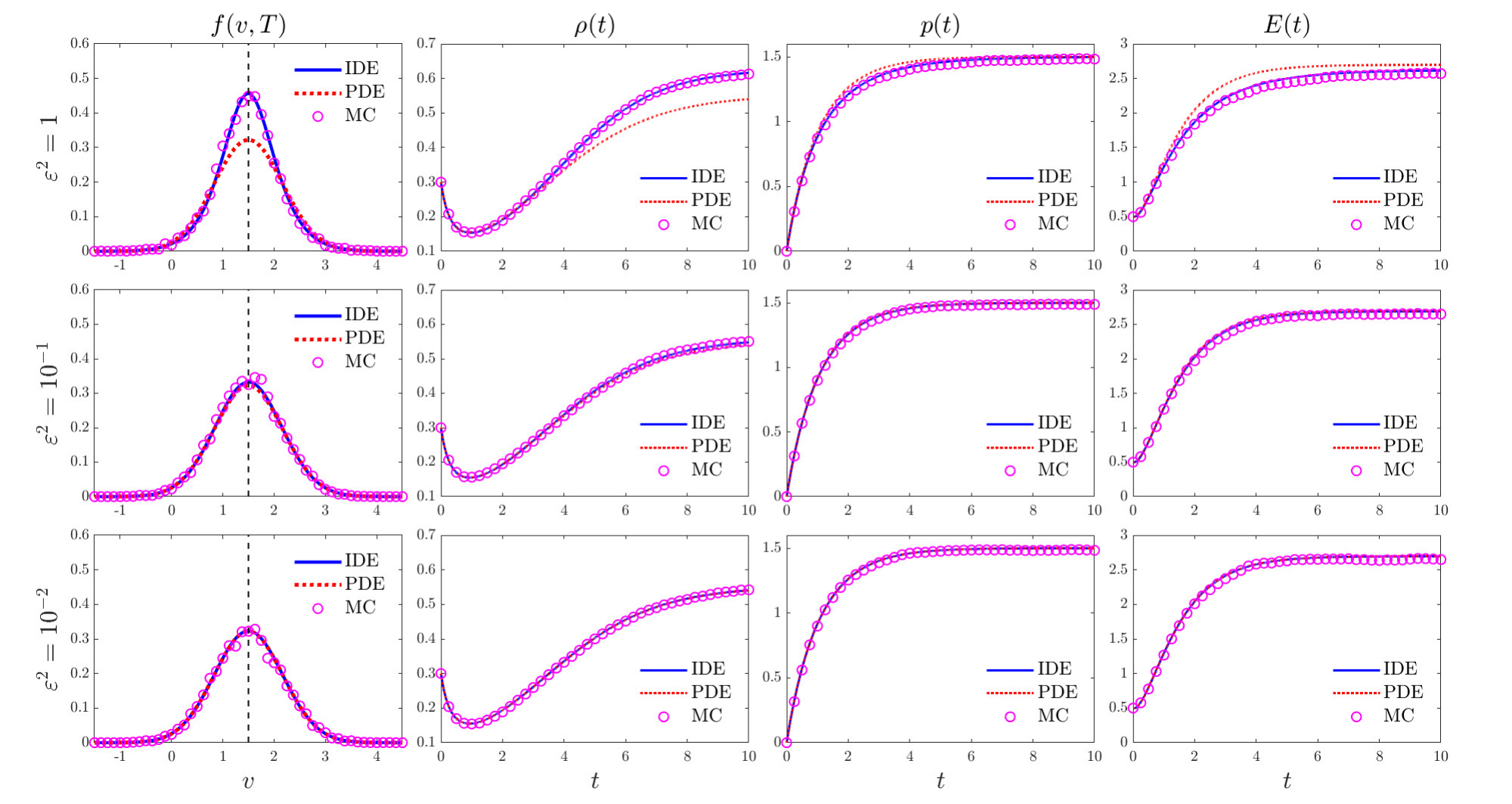}
    \caption{Results of numerical simulations of the stochastic agent-based model~\eqref{eq:3rand} (MC), the IDE model~\eqref{eq:IDE.scaled} (IDE), and the limit non-local PDE model~\eqref{eq:PDE.strong} (PDE) for the drift coefficient $\alpha=0$. The dashed vertical line in the panels of the first column highlights the fittest trait $v_m$}
    \label{fig:test_alpha_null}
\end{figure}
\begin{figure}[!ht]
    \centering
    \includegraphics[width=\linewidth]{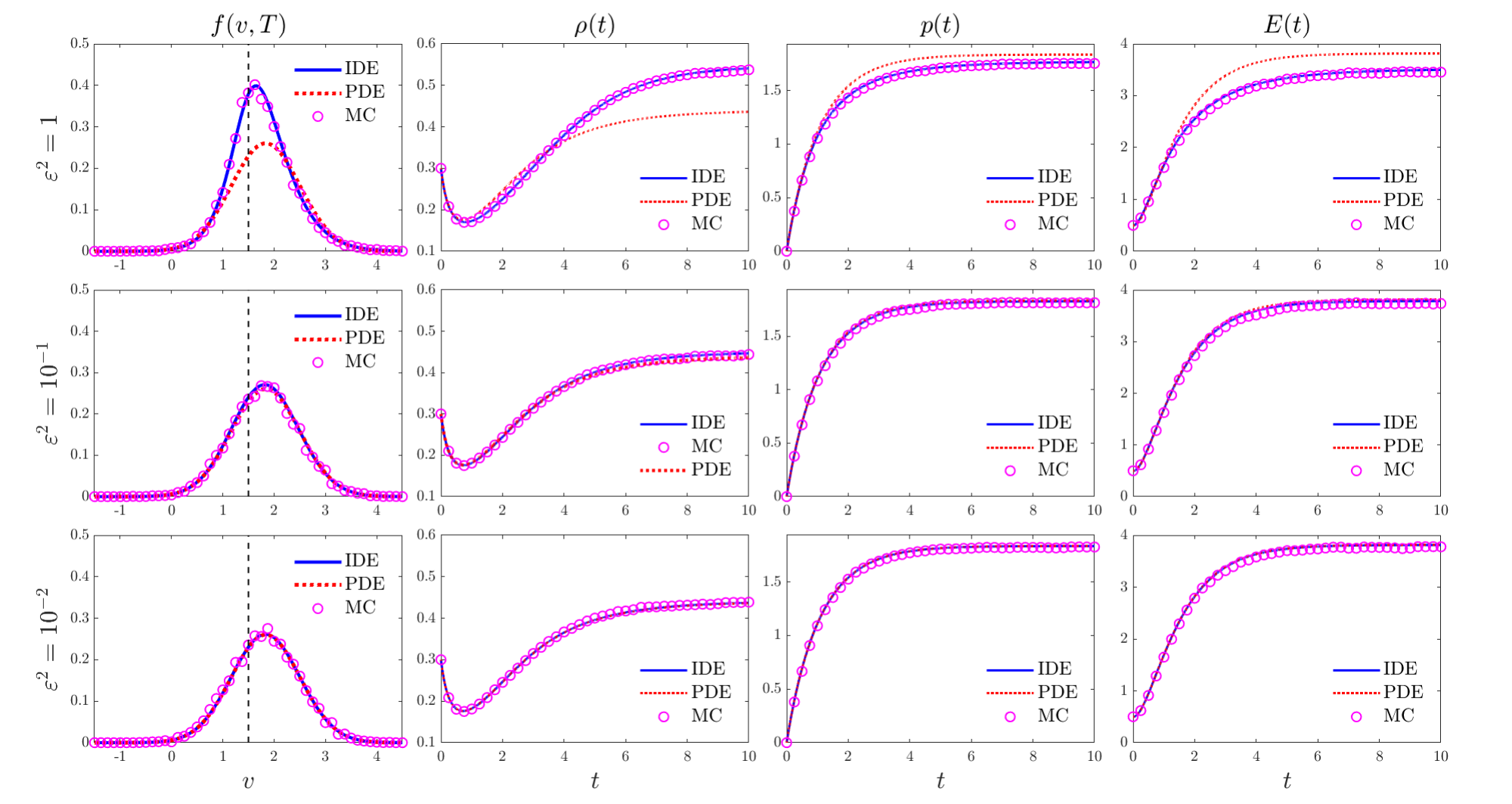}
    \caption{Results of numerical simulations of the stochastic agent-based model~\eqref{eq:3rand} (MC), the IDE model~\eqref{eq:IDE.scaled} (IDE), and the limit non-local PDE model~\eqref{eq:PDE.strong} (PDE) for the drift coefficient $\alpha>0$. The dashed vertical line in the panels of the first column indicates the fittest trait $v_m$}
    \label{fig:test_alpha_pos}
\end{figure}

Figures~\ref{fig:test_alpha_neg},~\ref{fig:test_alpha_null},~\ref{fig:test_alpha_pos} display the results of numerical simulations for three different values of the drift coefficient, $\alpha$, that is, $\alpha<0$ (Figure~\ref{fig:test_alpha_neg}), $\alpha=0$ (Figure~\ref{fig:test_alpha_null}) and $\alpha>0$ (Figure~\ref{fig:test_alpha_pos}) -- see Table~\ref{tab:param} for further details on the parameter values. In summary, the rows of each figure display, from left to right, the phenotype distribution function at the final time of simulations, $T=10$, and the time evolution from $t=0$ to $t=T$ of the density of agents, the phenotypic momentum, and the phenotypic bulk energy, computed for the three models for a given value of the scaling parameter $\varepsilon$, which decreases, from row to row, from $\varepsilon=1$ through to $\varepsilon=\frac{1}{\sqrt{10}}$ to $\varepsilon=\frac{1}{10}$. In all cases, it is evident that, while there is excellent quantitative agreement between numerical solutions of the agent-based model~\eqref{eq:3rand} and the IDE models~\eqref{eq:IDE.scaled} at all times and for all values of $\varepsilon$, the agreement with the solution of the non-local PDE model~\eqref{eq:PDE.strong} improves as $\varepsilon$ decreases, as expected from the quasi-invariant asymptotics. Note that, for the employed simulation set-up, a good qualitative agreement with the solution of the non-local PDE model~\eqref{eq:PDE.strong} is already observed for a value of $\varepsilon$ as moderately small as $\frac{1}{\sqrt{10}}\approx 0.32$.

\section{Conclusions}
\label{sect:conclusions}
In this work, we have advanced and employed a derivation method for aggregate mathematical descriptions of evolutionary dynamics in phenotype-structured populations, where population members undergo proliferation, death, and phenotype changes. This method is rooted in kinetic theory approaches for mass-varying multi-agent systems and makes it possible to bridge, in a consistent manner, three types of models for phenotype-structured populations commonly found in the literature: agent-based models tracking the dynamics of single population members, wherein phenotype changes and proliferation and death are represented as stochastic particle processes; IDE models governing the dynamics of phenotype distribution functions, in which phenotype changes are described by an integral operator encoding a mutation kernel, whereas proliferation and death are encapsulated in a non-local reaction term; and non-local PDEs for phenotype distribution functions, in which phenotype changes are taken into account by an advection-diffusion term, while proliferation and death are still represented by a non-local reaction term.

Our approach incorporates, from the very beginning, that is, from the agent-based level of representation, non-conservative phenomena (i.e. proliferation and death of population members). This has required special care in passing to the IDE model through the kinetic paradigm for multi-agent systems, since non-conservation of the number of agents, and thus of the mass of the system, is non-routine within such a paradigm. Moreover, this approach has shed light on the mutual relations among the aforementioned alternative types of models, which in the literature are usually addressed separately. In particular, we have shown that the IDE model is the aggregate counterpart of the agent-based model in every regime of the parameters driving the evolutionary dynamics of the population, whereas the non-local PDE model provides a faithful representation of the underlying evolutionary dynamics only in the regime of small and frequent phenotype changes.

A natural extension of this work would be to generalise the proposed approach to cases where, in addition to undergoing proliferation, death, and phenotype changes, population members move across space. In recent decades, a range of probabilistic methods~\cite{andrade2019local,champagnat2007invasion,fontbona2015non,leman2016convergence} and formal limiting procedures~\cite{freingruber2024inprogress, macfarlane2022individual,lorenzi2023derivation} have been developed for transitioning between stochastic agent-based models and deterministic continuum models which describe simultaneously the spatial spread and evolutionary dynamics of phenotype-structured populations. Yet, although phenotype-structuring variables have also been incorporated into kinetic equations (see e.g. \cite{engwer2015glioma,erban2004individual,lorenzi2024phenotype}), we are unaware of any such derivations rooted in kinetic approaches of the type advanced here. In this regard, we envisage appropriate extensions of the limiting procedures employed in this article to enable harnessing a range of methods and techniques from across agent-based, IDE, and non-local PDE models, which would enhance further progress in the mathematical formalisation and analysis of the evolutionary dynamics of phenotype-structured populations.

\appendix

\section*{Appendix}

\section{The case of constant~\texorpdfstring{$\boldsymbol{r}$}{}}
\label{app:const_r}
As stated in Remark~\ref{rem:constant_r}, the theory developed in the body of the paper does not cover the case of constant $r$. For the sake of theoretical speculation and completeness, in this appendix we provide the technical details needed to also address the simpler quasi-invariant limit in the case of constant $r\leq 1$.

In the following, we keep assumptions~\eqref{eq:r,f0},~\eqref{eq:init_cond}, except that we drop the request $r\in L^2(\R)$.

The main simplification that occurs with a constant $r$ is that the density $\rho_\varepsilon$ obeys a self-consistent equation independent of $\varepsilon$. In fact, substituting $\varphi(v)=1$ into~\eqref{eq:IDE.weak_scaled} we obtain
$$ \frac{d\rho_\varepsilon}{dt}=(r-\rho_\varepsilon)\rho_\varepsilon, $$
which is a logistic equation admitting the $\varepsilon$-free solution
$$ \rho_\varepsilon(t)=\frac{r\rho^0}{(r-\rho^0)e^{-rt}+\rho^0}. $$
If $0\leq\rho^0\leq r$, the result of Proposition~\ref{prop:bound.rho_eps} still holds qualitatively and we have, in particular, $0\leq\rho_\varepsilon(t)\leq r$ for all $t>0$.

The same reasoning applied to~\eqref{eq:PDE.weak} with $\varphi(v)=1$ shows that the density $\varrho$ fulfils the very same equation as $\rho_\varepsilon$, that is, 
$$ \frac{d\varrho}{dt}=(r-\varrho)\varrho, $$
and then $\rho_\varepsilon(t)=\varrho(t)$ for every $\varepsilon>0$ and every $t\geq 0$. In practice, there exists a unique density, say $\rho$, which is independent of $\varepsilon$ and is the same for both phenotype distribution functions $f_\varepsilon$, $g$. Therefore, there is no need to pass to the limit $\varepsilon\to 0^+$ in the density.

On the other hand, Propositions~\ref{prop:non-neg.feps},~\ref{prop:non-neg.g},~\ref{prop:L2_feps},~\ref{prop:Linf_estimates},~\ref{prop:bc_feps},~\ref{prop:L2_g} still hold because none of their proofs uses the $L^2$-integrability of $r$, as can be ascertained by direct inspection. 

The consequence of these facts is that Theorem~\ref{theo:feps_to_g} also holds when $r$ is constant. In fact, the term $\abs{(\rho_\varepsilon-\varrho)(t)}$, which in the proof of the theorem carries the dependence on $\norm{r}{L^2}$, vanishes whereas the rest of the proof remains unchanged due to the boundedness of $\partial^2_vf_\varepsilon$ and $\partial^3_vf_\varepsilon$. Specifically, the $L^2$-estimate on $f_\varepsilon-g$ provided in the proof modifies as
$$ \norm{(f_\varepsilon-g)(t)}{L^2}^2\leq 2r\int_0^t\norm{(f_\varepsilon-g)(s)}{L^2}^2\,ds
    +4\varepsilon rC_T\left(\alpha\varepsilon+\frac{1}{3}(\beta^{3/2}\ave{\abs{\tilde{Z}}^3}+1)\right)t, $$
and then one obtains the convergence $f_\varepsilon(t)\to g(t)$ in $L^2(\R)$ for every $t>0$ as $\varepsilon\to 0^+$ by applying the standard Gr\"{o}nwall's inequality.

As far as the convergence of the statistical moments is concerned, we notice that also the phenotypic momentum of $f_\varepsilon$ does not depend on $\varepsilon$. In fact, substituting $\varphi(v)=v$ into~\eqref{eq:IDE.weak_scaled} with~\eqref{eq:mu.scaling} we find
$$ \frac{dp_\varepsilon}{dt}=(r-\rho)p_\varepsilon+\alpha\rho, $$
which is the very same equation satisfied by the phenotypic momentum $p$ of $g$, as it can be verified by substituting $\varphi(v)=v$ into~\eqref{eq:PDE.weak}. Thus, $p_\varepsilon(t)=p(t)$ for every $\varepsilon>0$ and every $t\geq 0$, which makes it unnecessary to pass to the limit $\varepsilon\to 0^+$. Conversely, the phenotypic bulk energies $E_\varepsilon$, $E$ satisfy
$$ \frac{dE_\varepsilon}{dt}=(r-\rho)E_\varepsilon+2\alpha p+(\beta+\alpha^2\varepsilon^2)\rho, \qquad
    \frac{dE}{dt}=(r-\rho)E+2\alpha p+\beta\rho, $$
whence
$$ \frac{d}{dt}(E_\varepsilon-E)=(r-\rho)(E_\varepsilon-E)+\alpha^2\varepsilon^2\rho $$
and, finally,
$$ \abs{(E_\varepsilon-E)(t)}\leq 2r\int_0^t\abs{(E_\varepsilon-E)(s)}\,ds+\alpha^2r\varepsilon^2t. $$
From here, Gr\"{o}nwall's inequality provides the convergence $E_\varepsilon(t)\to E(t)$ for all $t>0$ when $\varepsilon\to 0^+$, in particular without assumption~\eqref{eq:v^2r} which, with constant $r$, could not be met.

\section{Generalised Gr\"{o}nwall-type inequalities}
\label{appendixB}
In this section, we report two generalisations of the classical Gr\"{o}nwall's inequality, which we exploit for the development of our theory.

\begin{lemma} \label{lemma:Gronw_Tosc}
For $T>0$, let $a,\,b,\,u:[0,\,T]\to\R$ be real-valued continuous functions such that $a$ is differentiable in $(0,\,T)$, $b$ is non-negative in $[0,\,T]$ and also
$$ u(t)\leq a(t)+\int_0^tb(s)u(s)\,ds, \qquad \forall\,t\in [0,\,T]. $$
Then
$$ u(t)\leq a(0)e^{\int_0^tb(s)\,ds}+\int_0^ta'(s)e^{\int_s^tb(\tau)\,d\tau}\,ds, \qquad \forall\,t\in [0,\,T]. $$
\end{lemma}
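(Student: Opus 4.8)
The plan is to reduce the integral inequality to a linear first-order differential inequality and then recast the resulting classical bound into the refined form via one integration by parts. First I would introduce the auxiliary function $v(t):=\int_0^tb(s)u(s)\,ds$, which is $C^1$ on $[0,\,T]$ since $b$ and $u$ are continuous, with $v(0)=0$. From the hypothesis $u(s)\leq a(s)+v(s)$ and $b\geq 0$ we get $v'(t)=b(t)u(t)\leq b(t)a(t)+b(t)v(t)$, i.e. $v'(t)-b(t)v(t)\leq b(t)a(t)$. Multiplying by the integrating factor $e^{-\int_0^tb(\tau)\,d\tau}$ turns the left-hand side into a total derivative, and integrating on $[0,\,t]$ (using $v(0)=0$) yields $v(t)\leq\int_0^tb(s)a(s)e^{\int_s^tb(\tau)\,d\tau}\,ds$. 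Combining with $u(t)\leq a(t)+v(t)$ gives the \emph{classical} Gr\"onwall bound expressed through $a$, namely $u(t)\leq a(t)+\int_0^tb(s)a(s)e^{\int_s^tb(\tau)\,d\tau}\,ds$.

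The decisive second step is to convert this into the stated form, which involves $a'$ rather than $a$. Here I would integrate by parts in the $s$-integral, exploiting the identity $\frac{d}{ds}\bigl(-e^{\int_s^tb(\tau)\,d\tau}\bigr)=b(s)e^{\int_s^tb(\tau)\,d\tau}$, which follows from $\frac{d}{ds}\int_s^tb(\tau)\,d\tau=-b(s)$. This is exactly the point at which the differentiability of $a$ assumed in the statement is used. The boundary contributions evaluate to $-a(t)+a(0)e^{\int_0^tb(\tau)\,d\tau}$: the term $-a(t)$ cancels the stray $a(t)$ sitting outside the integral, the term $a(0)e^{\int_0^tb(\tau)\,d\tau}$ becomes the leading contribution, and what remains of the integral is precisely $\int_0^ta'(s)e^{\int_s^tb(\tau)\,d\tau}\,ds$. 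Assembling these pieces reproduces the claimed inequality verbatim.

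The argument is essentially routine, so I do not expect a genuine obstacle; the only care required is bookkeeping in the integration by parts (getting both the sign of $\frac{d}{ds}e^{\int_s^tb}$ and the two boundary evaluations correct) and checking that the integrating-factor manipulation is legitimate under the stated regularity. Since $v$ is $C^1$, the function $t\mapsto v(t)e^{-\int_0^tb(\tau)\,d\tau}$ is differentiable and its derivative inequality integrates cleanly, so continuity of $b$ and $u$ suffices. I would also remark explicitly that no sign hypotheses on $a$ or on $a'$ are needed, only $b\geq 0$, which is what makes the divided inequality preserve its direction throughout.
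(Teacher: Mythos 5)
Your proposal is correct and follows essentially the same route as the paper: the paper invokes the classical Gr\"onwall bound $u(t)\leq a(t)+\int_0^ta(s)b(s)e^{\int_s^tb(\tau)\,d\tau}\,ds$ (citing a reference rather than re-deriving it via the integrating factor, as you do) and then performs exactly the integration by parts you describe, using $a(s)b(s)e^{\int_s^tb(\tau)\,d\tau}=-a(s)\frac{d}{ds}e^{\int_s^tb(\tau)\,d\tau}$ so that the boundary terms yield $-a(t)+a(0)e^{\int_0^tb(\tau)\,d\tau}$ and the remainder is $\int_0^ta'(s)e^{\int_s^tb(\tau)\,d\tau}\,ds$. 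Your sign bookkeeping and boundary evaluations match the paper's.
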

\begin{proof}
Under the present hypotheses, the classical Gr\"{o}nwall's inequality implies (cf. e.g.~\cite{mitrinovic1991BOOK})
\begin{align*}
    u(t)\leq a(t)+\int_0^ta(s)b(s)e^{\int_s^tb(\tau)\,d\tau}\,ds &=
        a(t)-\int_0^ta(s)\frac{d}{ds}e^{\int_s^tb(\tau)\,d\tau}\,ds,
\intertext{and then, integrating by parts,}
    &= a(t)-\left(a(s)e^{\int_s^tb(\tau)\,d\tau}\right\vert_0^t+\int_0^ta'(s)e^{\int_s^tb(\tau)\,d\tau}\,ds \\
    &= a(0)e^{\int_0^tb(\tau)\,d\tau}+\int_0^ta'(s)e^{\int_s^tb(\tau)\,d\tau}\,ds,
\end{align*}
which gives the thesis.
\end{proof}

\begin{lemma} \label{lemma:Gronw_gen}
For $T>0$, let $a,\,b,\,u:[0,\,T]\to\R$ be real-valued continuous functions with $b$ non-negative and $a,\,b$ non-decreasing. Assume moreover that
\begin{equation}
    u(t)\leq a(t)+b(t)\int_0^tu(s)\,ds, \qquad \forall\,t\in [0,\,T].
    \label{hyp:Gronw1}
\end{equation}
Then
$$ u(t)\leq a(t)e^{b(t)t}, \qquad \forall\,t\in [0,\,T]. $$
\end{lemma}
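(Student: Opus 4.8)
The plan is to exploit the monotonicity of $a$ and $b$ to reduce the given inequality -- in which $b(t)$ sits \emph{outside} the integral -- to a constant-coefficient linear differential inequality amenable to the integrating-factor method. First I would fix an arbitrary $t\in[0,\,T]$ and restrict attention to $s\in[0,\,t]$. Since $a$ and $b$ are non-decreasing, for every such $s$ we have $a(s)\leq a(t)$ and $b(s)\leq b(t)$, so hypothesis~\eqref{hyp:Gronw1} yields
\[
    u(s)\leq a(t)+b(t)\int_0^s u(\tau)\,d\tau, \qquad \forall\,s\in[0,\,t],
\]
in which $a(t)$ and $b(t)$ are now \emph{constants} with respect to the running variable $s$.

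Next I would set $v(s):=\int_0^s u(\tau)\,d\tau$, which is differentiable with $v'(s)=u(s)$ by continuity of $u$, and satisfies $v(0)=0$. The displayed inequality then reads $v'(s)\leq a(t)+b(t)v(s)$ on $[0,\,t]$. Multiplying through by the integrating factor $e^{-b(t)s}$ gives $\frac{d}{ds}\bigl(e^{-b(t)s}v(s)\bigr)\leq a(t)e^{-b(t)s}$, and integrating from $0$ to $t$ (using $v(0)=0$) produces, when $b(t)>0$,
\[
    e^{-b(t)t}v(t)\leq a(t)\,\frac{1-e^{-b(t)t}}{b(t)}, \qquad\text{equivalently}\qquad b(t)v(t)\leq a(t)\bigl(e^{b(t)t}-1\bigr).
\]

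Finally I would insert this back into the hypothesis evaluated at $s=t$, namely $u(t)\leq a(t)+b(t)v(t)$, to obtain $u(t)\leq a(t)+a(t)\bigl(e^{b(t)t}-1\bigr)=a(t)e^{b(t)t}$, which is the claim; the degenerate case $b(t)=0$ is immediate, since the hypothesis then gives directly $u(t)\leq a(t)=a(t)e^{b(t)t}$. As $t\in[0,\,T]$ was arbitrary, the thesis follows. The only genuinely delicate point -- and the reason the statement is not a direct corollary of the classical Gr\"{o}nwall inequality -- is that $b$ multiplies the integral from outside; freezing $t$ and invoking the monotonicity of $a$ and $b$ to bound $a(s),\,b(s)$ by $a(t),\,b(t)$ on $[0,\,t]$ is precisely what neutralises this difficulty and allows the constant-coefficient argument to go through.
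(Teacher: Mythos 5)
Your overall strategy---freeze $t$, exploit the monotonicity of $a$ and $b$, and reduce to a constant-coefficient Gr\"{o}nwall argument---is close in spirit to the paper's proof, but your very first reduction contains a genuine gap. To pass from $u(s)\leq a(s)+b(s)\int_0^s u(\tau)\,d\tau$ to $u(s)\leq a(t)+b(t)\int_0^s u(\tau)\,d\tau$ you need $b(s)\int_0^s u(\tau)\,d\tau\leq b(t)\int_0^s u(\tau)\,d\tau$. Since $0\leq b(s)\leq b(t)$, this holds only when $\int_0^s u(\tau)\,d\tau\geq 0$; if $\int_0^s u(\tau)\,d\tau<0$, multiplying by the larger number $b(t)$ makes the product smaller, i.e. $b(s)\int_0^s u(\tau)\,d\tau\geq b(t)\int_0^s u(\tau)\,d\tau$, and your frozen-coefficient inequality may fail. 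The lemma as stated assumes only that $u$ is continuous and real-valued, not that it is non-negative, so this case is not excluded. The rest of your argument (the integrating factor, the treatment of $b(t)=0$, and the final substitution back into the hypothesis at $s=t$) is correct and, incidentally, cleaner than it needs to be.

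The gap is harmless in the only place the paper invokes the lemma, where $u(t)=\norm{(f_\varepsilon-g)(t)}{L^2}^2\geq 0$, so adding the hypothesis $u\geq 0$ would make your proof complete and sufficient for the application. To prove the lemma as actually stated, do what the paper does: set $v(s):=e^{-\int_0^s b(\tau)\,d\tau}\int_0^s u(\tau)\,d\tau$ and observe that
\begin{equation*}
    v'(s)=e^{-\int_0^s b(\tau)\,d\tau}\left(u(s)-b(s)\int_0^s u(\tau)\,d\tau\right)\leq a(s)e^{-\int_0^s b(\tau)\,d\tau},
\end{equation*}
which uses the hypothesis verbatim, with $b(s)$ left in place and no sign information on $u$ required. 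The monotonicity of $a$ and $b$ is then invoked only after integrating this inequality, at which point it is applied to factors of known sign ($e^{-\int_0^s b(\tau)\,d\tau}>0$ and, after multiplying through by $b(t)\geq 0$, the exponential kernels). That reordering of the two steps---integrate first, then freeze the coefficients---is precisely what neutralises the sign issue your version runs into.
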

\begin{remark}
This result is a variation of the classical Gr\"{o}nwall's inequality, in that the function $b$ is neither constant nor $s$-dependent within the integral on the right-hand side of~\eqref{hyp:Gronw1}.
\end{remark}
\begin{proof}[Proof of Lemma~\ref{lemma:Gronw_gen}]
We introduce the auxiliary function
$$ v(s):=e^{-\int_0^sb(\tau)\,d\tau}\int_0^su(\tau)\,d\tau, $$
which is such that
$$ v'(s)=e^{-\int_0^sb(\tau)\,d\tau}\left(u(s)-b(s)\int_0^su(\tau)\,d\tau\right)\leq a(s)e^{-\int_0^sb(\tau)\,d\tau}, $$
with the last inequality following from~\eqref{hyp:Gronw1}. Integrating on $[0,\,t]$, with $0<t\leq T$, and noting that $v(0)=0$ by definition, we find
$$ v(t)\leq\int_0^ta(s)e^{-\int_0^sb(\tau)\,d\tau}\,ds\leq a(t)\int_0^te^{-\int_0^sb(\tau)\,d\tau}\,ds, $$
where we have used that $a(s)\leq a(t)$ for all $s\leq t$, because $a$ is non-decreasing. Hence, recalling the definition of $v$,
\begin{align*}
    b(t)e^{\int_0^tb(\tau)\,d\tau}v(t) &= b(t)\int_0^tu(r)\,dt \\
    &\leq a(t)b(t)\int_0^te^{\int_s^tb(\tau)\,d\tau}\,ds \\
    &\leq a(t)b(t)\int_0^te^{b(t)(t-s)}\,ds,
\end{align*}
where we have used the fact that $b(r)\leq b(t)$ for all $r\leq t$, because also $b$ is non-decreasing. Then,
\begin{align*}
    b(t)\int_0^tu(\tau)\,d\tau &\leq a(t)e^{b(t)t}\int_0^tb(t)e^{-b(t)s}\,ds \\
    &= a(t)e^{b(t)t}\left(-e^{-b(t)s}\right\vert_0^t \\
    &= a(t)\left(e^{b(t)t}-1\right).
\end{align*}
Substituting this into~\eqref{hyp:Gronw1} yields the thesis.
\end{proof}

\bibliographystyle{plain}
\bibliography{biblio}
\end{document}